\documentclass[12pt]{amsart}
\usepackage{amssymb,latexsym,comment,url}

\newcommand{\Aff}{{\mathbb A}}
\newcommand{\Aut}{{\operatorname{Aut}}}
\newcommand{\charic}{{\operatorname{char}\,}} 
\newcommand{\FF}{{\mathcal F}}
\newcommand{\GG}{{\mathcal G}}
\newcommand{\GL}{\operatorname{GL}}
\newcommand{\Hom}{\operatorname{Hom}}
\newcommand{\id}{\operatorname{id}}
\newcommand{\isom}{{\, \cong \,}}
\newcommand{\KK}{{\mathbb K}}
\newcommand{\LL}{{\mathbb L}}
\newcommand{\CL}{{\mathcal L}}
\newcommand{\la}{{\lambda}}
\newcommand{\ord}{{\operatorname{ord}}}
\newcommand{\PP}{{\mathbb P}}
\newcommand{\Pic}{{\operatorname{Pic}}}
\newcommand{\Q}{{\mathbb Q}}
\newcommand{\ra}{{\, \longrightarrow \,}}
\newcommand{\ratto}{{\,- \to\,}}
\newcommand{\SL}{\operatorname{SL}}
\newcommand{\liesl}{\mathfrak{sl}}
\newcommand{\Sec}{{\operatorname{Sec}\,}}
\newcommand{\Tan}{\operatorname{Tan}}
\newcommand{\UU}{{\mathbb U}}
\newcommand{\VV}{{\mathbb V}}
\newcommand{\Z}{{\mathbb Z}}

\newcommand{\MatP}{\left({\begin{array}{c@{\quad}c@{\,\,\,}c@{\,\,\,}cc}
    1   &    0   &    0    & \cdots &      0     \\
    0   & \zeta_n  &    0    & \cdots &      0     \\
    0   &    0   & \zeta_n^2 & \cdots &      0     \\ 
 \vdots & \vdots & \vdots  &        &   \vdots   \\
    0   &   0    &    0    & \cdots & \zeta_n^{n-1}  
\end{array}}\right)}

\newcommand{\MatQ}{\left({\begin{array}{c@{\quad}c@{\quad}c@{\quad}c@{\quad}c}
    0   &    0   & \cdots &   0    &      1     \\ 
    1   &    0   & \cdots &   0    &      0     \\
    0   &    1   & \cdots &   0    &      0     \\ 
 \vdots & \vdots &       & \vdots &   \vdots   \\
    0   &   0    & \cdots &   1    &      0       
\end{array}}\right)}

\newenvironment{ProofOf}[1]{\par\noindent{\it Proof of #1.}}%
                       {\hspace*{\fill}\nobreak$\qed$\par\medskip}

\newtheorem{thm}{Theorem}[section]
\newtheorem{prop}[thm]{Proposition}
\newtheorem{lem}[thm]{Lemma}

\theoremstyle{definition}

\newtheorem{defn}[thm]{Definition}
\newtheorem{rem}[thm]{Remark}
\newtheorem{example}[thm]{Example}

\addtolength{\hoffset}{-1cm}
\addtolength{\textwidth}{2cm}

\renewcommand{\theenumi}{\roman{enumi}}

\begin{document}

\title[Invariant theory for the elliptic normal quintic]%
{Invariant theory for the~elliptic~normal~quintic,\\
II. The covering map}

\author{Tom Fisher}
\address{University of Cambridge,
         DPMMS, Centre for Mathematical Sciences,
         Wilberforce Road, Cambridge CB3 0WB, UK}
\email{T.A.Fisher@dpmms.cam.ac.uk}

\date{11th March 2013}  

\begin{abstract}
  A genus one curve $C$ of degree $5$ is defined by the $4 \times 4$
  Pfaffians of a $5 \times 5$ alternating matrix of linear forms on
  $\PP^4$. We prove a result characterising the covariants for these
  models in terms of their restrictions to the family of curves
  parametrised by the modular curve $X(5)$.  We then construct
  covariants describing the covering map of degree $25$ from $C$ to
  its Jacobian and give a practical algorithm for evaluating them.
\end{abstract}

\maketitle


\section{Introduction}
\label{sec:intro}

\begin{defn}
\label{defdegen}
Let $n \ge 3$ be an integer.
\begin{enumerate}
\item An {\em elliptic normal curve} $C \subset \PP^{n-1}$ is a smooth
  curve of genus one and degree $n$ that spans $\PP^{n-1}$.
\item A {\em rational nodal curve} $C \subset \PP^{n-1}$ is a rational
  curve of degree $n$ that spans $\PP^{n-1}$ and has a single node.
\end{enumerate}
\end{defn}

If $C \subset \PP^{n-1}$ is an elliptic normal curve then there is a
covering map $\pi$ of degree $n^2$ from $C$ to its Jacobian $E$ given
by $P \mapsto [nP-H] \in \Pic^0(C) \isom E$ where $H$ is the
hyperplane section. We may also describe $\pi : C \to E$ as the map
that quotients out by the action of $E[n]$ on $C$ by translation
(assuming we are not in characteristic dividing~$n$).  The subgroup of
$\SL_n$ consisting of matrices that describe this action is called the
{\em Heisenberg group} of $C$. If $n$ is odd then over an
algebraically closed field we may change co-ordinates so that this
group is generated by
\begin{equation}
\label{heismats}
\MatP \quad {\mbox{ and }} \quad \MatQ 
\end{equation}
where $\zeta_n$ is a primitive $n$th root of unity. (If $n$ is even
then one must take scalar multiples of these matrices with determinant
$1$.)

In the cases $n=2,3,4$ classical invariant theory gives formulae for
the Jacobian $E$ and for the covering map $\pi: C \to E$.  See
\cite{We1}, \cite{We2} for the cases $n=2,3$, and \cite{Mc+} for a
survey of the cases $n=2,3,4$. In \cite{g1inv} we gave a practical
algorithm for evaluating the invariants in the case $n=5$ and showed
that they give a formula for the Jacobian.  We now extend this
invariant theoretic approach to give a formula for the covering map.

We work throughout over a field $K$ of characteristic not dividing
$6n$, where in due course we take $n=5$. Except in the following
paragraph, and at the end of Section~\ref{sec:covmap},
we assume for simplicity that $K$ is algebraically closed.

To explain the motivation for our work, let $E$ be an elliptic curve
over a number field $K$. For any integer $n \ge 2$ the quotient group
$E(K)/n E(K)$ injects into the $n$-Selmer group $S^{(n)}(E/K)$, which
is finite and effectively computable.  In an explicit $n$-descent
calculation one represents each element of the $n$-Selmer group by
(equations for) an elliptic normal curve $C \subset \PP^{n-1}$ with
Jacobian $E$. It is perhaps better to call $C$ a ``genus one normal
curve'' as it need not have any $K$-rational points.  The Selmer group
elements with $C(K) \not= \emptyset$ make up the image of $E(K)/n
E(K)$ in $S^{(n)}(E/K)$.  Moreover if $P \in C(K)$ then a coset
representative for the corresponding element of $E(K)/nE(K)$ is given
by the image of $P$ under the covering map. Having explicit formulae
for the covering map can therefore help in finding generators for the
Mordell-Weil group $E(K)$.

In the case $n=5$ the curves of Definition~\ref{defdegen} are called
{\em elliptic normal quintics} and {\em rational nodal quintics}.  By
the Buchsbaum-Eisenbud structure theorem~\cite{BE1}, \cite{BE2} they
are defined by the $4 \times 4$ Pfaffians of a $5 \times 5$
alternating matrix of linear forms on $\PP^4$. We call such a matrix
$\phi$ a {\em genus one model} and write $C_\phi \subset \PP^4$ for
the subvariety defined by the $4 \times 4$ Pfaffians.  It is shown in
\cite[Proposition~5.10]{g1inv} that $C_\phi$ is a smooth curve of
genus one if and only if it is an elliptic normal quintic. In this
case we say that $\phi$ is {\em non-singular}.

There is a natural action of $\GL_5 \times \GL_5$ on the space of
genus one models. The first factor acts as $M : \phi \mapsto M \phi
M^T$ and the second factor acts by changing co-ordinates on $\PP^4$.
We adopt the following notation. Let $V$ and $W$ be $5$-dimensional
vector spaces with bases $v_0, \ldots, v_4$ and $w_0, \ldots, w_4$.
We identify the space of genus one models with $\wedge^2 V \otimes W$
via
\[ \phi = (\phi_{ij}) \longleftrightarrow \textstyle\sum_{i<j} (v_i
\wedge v_j) \otimes \phi_{ij}(w_0, \ldots, w_4). \] With this
identification the action of $\GL_5 \times \GL_5$ becomes the natural
action of $\GL(V) \times \GL(W)$ on $\wedge^2 V \otimes W$.  By
squaring and then identifying $\wedge^4 V \isom V^*$ there is a
natural map
\begin{equation}
\label{def:P2}
 P_2 : \wedge^2 V \otimes W \to V^* \otimes S^2 W = \Hom(V,S^2W). 
\end{equation}
Explicitly $P_2(\phi) = (v_i \mapsto p_i(w_0,\ldots, w_4))$ where
$p_0, \ldots, p_4$ are the $4 \times 4$ Pfaffians of $\phi$.  Thus $V$
may be thought of as the space of quadrics defining $C_{\phi}$ and $W$
as the space of linear forms on $\PP^4$.

\begin{lem} 
\label{lem:orbits}
The action of $\GL(V) \times \GL(W)$ is transitive on the the genus
one models $\phi$ for which $C_\phi$ is a rational nodal quintic, and
on the genus one models $\phi$ for which $C_\phi$ is an elliptic
normal quintic with given $j$-invariant.
\end{lem}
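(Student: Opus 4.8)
The plan is to deduce the lemma from two ingredients: a classification of elliptic normal quintics and of rational nodal quintics up to projective equivalence, and the uniqueness --- up to the action of $\GL(V)$ --- of the genus one model defining a given curve, the latter being part of the Buchsbaum--Eisenbud structure theorem. More precisely, I would first recall that if $\phi$ and $\phi'$ are genus one models with $C_\phi = C_{\phi'}$ as subschemes of $\PP^4$, then $\phi' = \la\, M \phi M^T$ for some $M \in \GL(V)$ and $\la \in K^\times$ (see \cite{g1inv}); since $K$ is algebraically closed we may write $\la = \mu^2$ and absorb $\mu$ into $M$, so that $\phi$ and $\phi'$ lie in a single $\GL(V)$-orbit. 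As the $\GL(W)$-factor acts on the curves $C_\phi$ as the full group of projective transformations of $\PP^4$, it follows that two genus one models lie in one $\GL(V)\times\GL(W)$-orbit whenever the corresponding curves are projectively equivalent. (Conversely the group action preserves the isomorphism type of $C_\phi$, hence its $j$-invariant and the property of being rational nodal, so the two sets in the statement are automatically unions of orbits.) It therefore suffices to prove: (i) any two rational nodal quintics in $\PP^4$ are projectively equivalent; and (ii) any two elliptic normal quintics in $\PP^4$ with the same $j$-invariant are projectively equivalent.

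For (ii): since $K = \bar K$, an elliptic normal quintic $C$ is abstractly an elliptic curve $E$ with $j(E) = j(C)$, and $\mathcal{O}_C(1)$ is a degree-$5$ line bundle $\mathcal{L}$ on $C$, so $h^0(\mathcal{L}) = 5$ by Riemann--Roch; because $C$ spans $\PP^4$ the coordinate functions form a basis of $H^0(\mathcal{L})$, so the embedding is the one attached to the complete linear system $|\mathcal{L}|$. Given a second such quintic $C'$ with $j(C') = j(C)$ we have $C \isom C'$, and so it is enough to show that for any two degree-$5$ line bundles $\mathcal{L}, \mathcal{L}'$ on $C$ the images of the associated embeddings are projectively equivalent. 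The map $P \mapsto t_P^*\mathcal{L} \otimes \mathcal{L}^{-1}$ is a homomorphism $E \to \Pic^0(E)$ which, under the standard identification $E \isom \Pic^0(E)$, is multiplication by $-5$, hence surjective; so I can choose $P \in E$ with $t_P^*\mathcal{L} \isom \mathcal{L}'$. Then the embedding of $C$ by $|\mathcal{L}|$, pre-composed with the automorphism $t_P$, is again an embedding by $|\mathcal{L}'|$ (as $t_P^*|\mathcal{L}| = |\mathcal{L}'|$), hence differs from the embedding by $|\mathcal{L}'|$ by an element of $\GL(W)$; since $t_P(C) = C$, the two images lie in a single $\GL(W)$-orbit, which is what we want.

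I would prove (i) in exactly the same way, replacing $E$ by the (unique up to isomorphism) projective curve $N$ of arithmetic genus $1$ with a single node --- concretely, $\PP^1$ with two points identified. A rational nodal quintic is the image of $N$ under the complete linear system of a degree-$5$ line bundle (which has $5$ independent sections, again by Riemann--Roch); one has $\Pic^0(N) \isom {\mathbb G}_m$, and the role of multiplication by $5$ is played by the map ${\mathbb G}_m \to {\mathbb G}_m$, $x \mapsto x^5$, which is surjective on $\bar K$-points. As before this shows all degree-$5$ line bundles on $N$ differ by automorphisms of $N$, and hence all rational nodal quintics are projectively equivalent. The step I expect to be the main obstacle is the one used at the start: making precise how the non-uniqueness of the Buchsbaum--Eisenbud resolution of $C_\phi$ gives rise to the transformation $\phi \mapsto \la\, M\phi M^T$, and checking that the scalar $\la$ can indeed be eliminated (automatic over $\bar K$, but it is the point that reduces the lemma to the geometric statements (i) and (ii)); for this I would appeal to the explicit analysis of genus one models in \cite{g1inv}.
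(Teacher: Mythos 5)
The paper does not actually prove this lemma here at all --- it simply cites \cite[Proposition 4.6]{g1inv} --- and your argument is a correct reconstruction of essentially that proof: reduce to projective equivalence of the curves (embeddings by complete linear systems, with translations acting transitively on degree-$5$ line bundles via multiplication by $\pm 5$ on $E$, respectively the fifth-power map on $\Pic^0 \isom \mathbb{G}_m$ in the nodal case), combined with the Buchsbaum--Eisenbud uniqueness of the Pfaffian presentation up to $\phi \mapsto \la M \phi M^T$ with the scalar absorbed over $\bar K$. The only step worth making explicit is that the Pfaffian ideal of a model whose zero scheme is a curve is Cohen--Macaulay of codimension $3$, hence saturated, so $C_\phi = C_{\phi'}$ as subschemes really does give equality of homogeneous ideals, and that the rational nodal quintic is arithmetically Gorenstein so the same uniqueness statement applies in that case too.
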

\begin{proof}
See \cite[Proposition 4.6]{g1inv}.
\end{proof}

The co-ordinate ring $K[\wedge^2 V \otimes W]$ is a polynomial ring in
$50$ variables.

\begin{thm}
\label{thm:inv} 
The ring of invariants for $\SL(V) \times \SL(W)$ acting on
$K[\wedge^2 V \otimes W]$ is generated by invariants $c_4$ and $c_6$
of degrees $20$ and $30$. Moreover if we scale them as specified in
\cite{g1inv} and put $\Delta = (c_4^3 - c_6^2)/1728$ then
\begin{enumerate}
\item a genus one model $\phi$ is non-singular if and only if
  $\Delta(\phi) \not=0$,
\item if $\phi$ is non-singular then $C_\phi$ has $j$-invariant
  $c_4(\phi)^3/\Delta(\phi)$.
\end{enumerate}
\end{thm}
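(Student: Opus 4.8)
The plan is to combine Lemma~\ref{lem:orbits} with the Molien--Weyl formula for the Hilbert series of an invariant ring. Write $G = \SL(V) \times \SL(W)$ and $R = K[\wedge^2 V \otimes W]^{G}$; since $G$ is reductive, $R$ is a finitely generated graded integral domain. First I would determine the transcendence degree of $R$. The central torus $\mathbb{G}_m \times \mathbb{G}_m$ of $\GL(V) \times \GL(W)$ acts on $\wedge^2 V \otimes W$ through the character $(\lambda,\mu) \mapsto \lambda^{2}\mu$, which surjects onto $\mathbb{G}_m$; hence, over the algebraically closed field $K$, every $\GL(V) \times \GL(W)$-orbit is a union of $\mathbb{G}_m$-many $G$-orbits. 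Together with Lemma~\ref{lem:orbits} this shows that the $G$-orbits meeting the non-singular locus form a two-parameter family, parametrised by the $j$-invariant and the scaling factor, while a generic such orbit has finite stabiliser in $G$ and therefore dimension $48$. Consequently $\operatorname{trdeg}_K R = 50 - 48 = 2$.

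Next I would take the invariants $c_4$ and $c_6$ of degrees $20$ and $30$ as constructed (and scaled) in \cite{g1inv}, recall from there that they are $G$-invariant, and check that they are algebraically independent --- for instance from the two-parameter orbit description, since on the non-singular locus the pair $(c_4,c_6)$ varies in both the $j$-direction and the scaling direction. Then $K[c_4,c_6] \subseteq R$ is an inclusion of graded rings, with $K[c_4,c_6]$ a polynomial ring of transcendence degree $2$. Since both rings are graded with finite-dimensional homogeneous components, it now suffices to show that $R$ has Hilbert series $1/\bigl((1-t^{20})(1-t^{30})\bigr)$: this equals the Hilbert series of $K[c_4,c_6]$, so the inclusion would be an equality in each degree. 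I would compute the Hilbert series of $R$ by the Molien--Weyl integral (working in characteristic zero, the positive-characteristic case then following by reduction modulo $p$ as in \cite{g1inv}), extracting the $G$-invariant part of $\operatorname{Sym}^{\bullet}(\wedge^{2}V^{*}\otimes W^{*})$ by integrating $\prod_i (1-t\chi_i)^{-1}$ against normalised Haar measure on a maximal compact subgroup of $G$, equivalently by picking off the trivial constituent via the Weyl character formula for $\SL_5 \times \SL_5$. Carrying out this computation and checking that it yields exactly $1/\bigl((1-t^{20})(1-t^{30})\bigr)$ is the main obstacle.

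For part~(i) put $\Delta = (c_4^{3}-c_6^{2})/1728$; being a polynomial in the algebraically independent $c_4,c_6$ it is a nonzero $G$-invariant, so $\{\Delta \neq 0\}$ is a dense open $G$-stable subset. By \cite[Proposition~5.10]{g1inv} the non-singular locus is precisely the set of $\phi$ with $C_\phi$ an elliptic normal quintic. I would show that $\Delta$ vanishes identically off this locus --- it is enough to check this on the rational nodal quintics, which form a single orbit by Lemma~\ref{lem:orbits}, and on the remaining lower-dimensional strata by continuity --- and that $\Delta$ is nowhere zero on the non-singular locus, which by Lemma~\ref{lem:orbits} reduces to one model for each $j$-invariant, where it follows from the Jacobian elliptic curve having nonzero discriminant. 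For part~(ii), the ratio $c_4^{3}/\Delta$ is homogeneous of degree $0$ and, being a quotient of two $G$-invariants that transform identically under the central torus, is a rational $\GL(V)\times\GL(W)$-invariant on the non-singular locus; by Lemma~\ref{lem:orbits} it is therefore a function of the $j$-invariant alone. That this function is the identity is exactly the normalisation of $c_4,c_6$ fixed in \cite{g1inv}, and I would verify it by comparison with the formula for the Jacobian established there, or by direct evaluation on a suitable one-parameter family of non-singular models.
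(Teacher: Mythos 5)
The paper does not actually prove this theorem: it is imported wholesale via the citation to \cite[Theorem 4.4]{g1inv}, so the comparison is really with the argument given there. Your outline has a reasonable overall architecture for a from-scratch proof (bound the transcendence degree by an orbit count, exhibit $c_4,c_6$, then pin down the Hilbert series), but the step that carries the entire content of the generation statement --- that the Hilbert series of $R=K[\wedge^2 V\otimes W]^{\SL(V)\times\SL(W)}$ equals $1/\bigl((1-t^{20})(1-t^{30})\bigr)$ --- is exactly the step you defer as ``the main obstacle''. Without it you only have an inclusion $K[c_4,c_6]\subseteq R$ of two-dimensional graded domains, which leaves open, for instance, that $R$ is a proper finite extension of $K[c_4,c_6]$ with further generators. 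The Molien--Weyl integral here is an eight-fold iterated residue for $\SL_5\times\SL_5$ acting on a $50$-dimensional representation; it is not a routine verification, and it is not how the result is in fact proved. The argument in \cite{g1inv} (whose mechanism this paper reproduces in Sections~\ref{sec:discov}--\ref{sec:ex}) restricts invariants to the Hesse family: restriction is injective (Theorem~\ref{thm:dcov}), the ring of discrete invariants is computed by finite group theory to be generated by $D,c_4,c_6$ subject to $c_4^3-c_6^2=1728D^5$ as in~(\ref{discinv}), the weight congruence of Lemma~\ref{wtlemma} excludes everything outside $K[c_4,c_6]$, and the lifting criterion (Theorem~\ref{f1thm}, or the explicit constructions in \cite{g1inv}) shows $c_4$ and $c_6$ are genuine invariants. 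That route replaces the intractable integral with finite, explicit computations.

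Two smaller soft spots. For part (i), ``the remaining lower-dimensional strata by continuity'' presupposes that every singular model lies in the closure of the rational nodal orbit, i.e.\ that the singular locus is an irreducible hypersurface in which that orbit is dense; this is true but is itself a nontrivial geometric input (essentially \cite[Lemma 4.10]{g1inv}, the source of Lemma~\ref{lem:codim1}) and cannot be waved through. For the nonvanishing of $\Delta$ on non-singular models and for part (ii), appealing to ``the Jacobian having nonzero discriminant'' and to ``the normalisation fixed in \cite{g1inv}'' comes close to assuming what is to be proved; the clean check is to restrict to the Hesse family, where $\Delta=D^5$ with $D(a,b)\neq 0$ precisely on the non-singular members, and to the Weierstrass models of \cite[Section 6]{g1inv} for the $j$-invariant formula.
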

\begin{proof}
See \cite[Theorem 4.4]{g1inv}.
\end{proof}

\begin{lem}
\label{lem:codim1}
Let $\phi \in \wedge^2 V \otimes W$ be a genus one model with $C_\phi$
either an elliptic normal quintic or a rational nodal quintic. Then
the Zariski closure of the $\GL(V) \times \GL(W)$-orbit of $\phi$ is
the zero locus of an irreducible homogeneous invariant $I$. Moreover
we can take
\[ I = \left\{ \begin{array}{cl} c_4 & \mbox{ if } j(C_\phi) = 0 \\
c_6 & \mbox{ if } j(C_\phi) = 1728  \\
\Delta & \mbox{ if $C_\phi$ is a rational nodal quintic} \\
c_4^3 - j(C_\phi) \Delta & \mbox{ otherwise. } \end{array} \right. \]
\end{lem}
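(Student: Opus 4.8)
The plan is to show that the Zariski closure $\overline{O}$ of the orbit $O = (\GL(V)\times\GL(W))\cdot\phi$ equals the zero locus $\{I=0\}$ of the invariant $I$ listed in the statement. Note that $O$, being the image of the connected group $\GL(V)\times\GL(W)$ under a morphism, is irreducible, hence so is $\overline{O}$; and since $I$ is a polynomial in the invariants $c_4$ and $c_6$ it is a homogeneous invariant. So once we have proved $\overline{O}=\{I=0\}$ and that $I$ is irreducible, the lemma follows, the irreducible polynomial cutting out the irreducible hypersurface $\overline{O}$ being a scalar multiple of $I$.

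\emph{Step 1: $I$ is irreducible in $K[\wedge^2 V\otimes W]$.} In each case $I$ is one of the non-zero invariants $c_4$, $c_6$, $\Delta = (c_4^3 - c_6^2)/1728$, or $c_4^3 - j(C_\phi)\Delta = (1 - j(C_\phi)/1728)\,c_4^3 + (j(C_\phi)/1728)\,c_6^2$. Since $I$ is invariant, $\SL(V)\times\SL(W)$ acts on the finite set of irreducible factors of $I$ (up to scalar multiples); being connected it acts trivially, so it fixes each irreducible factor of $I$ up to a scalar, and since $\SL(V)\times\SL(W)$ has no non-trivial characters it fixes each factor exactly. Hence every irreducible factor of $I$ is $\SL(V)\times\SL(W)$-invariant, and so lies in the polynomial ring $K[c_4,c_6]$ by Theorem~\ref{thm:inv}. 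It therefore suffices to observe that $c_4$, $c_6$, $c_4^3-c_6^2$ and $\alpha c_4^3+\beta c_6^2$ with $\alpha\beta\neq0$ are irreducible in $K[c_4,c_6]$: a proper factorisation of one of the last two would produce a $\gamma\in K[c_4]$ with $\gamma^2$ a non-zero scalar multiple of $c_4^3$, which is impossible. Thus $\{I=0\}$ is an irreducible hypersurface, of dimension $49$.

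\emph{Step 2: the case when $C_\phi$ is an elliptic normal quintic}, say with $j(C_\phi)=j_0$. By Theorem~\ref{thm:inv}, the open subset $\{I=0\}\cap\{\Delta\neq0\}$ of $\{I=0\}$ is precisely the set of genus one models $\psi$ for which $C_\psi$ is an elliptic normal quintic with $j(C_\psi)=j_0$: if $\Delta(\psi)\neq0$ then $C_\psi$ is an elliptic normal quintic and $c_4(\psi)^3/\Delta(\psi)=j(C_\psi)$, while on $\{c_4=0\}$ one has $\Delta=-c_6^2/1728$ and on $\{c_6=0\}$ one has $\Delta=c_4^3/1728$. By Lemma~\ref{lem:orbits} this set is a single $\GL(V)\times\GL(W)$-orbit, which (as it contains $\phi$) must be $O$. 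So $O=\{I=0\}\cap\{\Delta\neq0\}$ is a non-empty Zariski-open subset of the irreducible variety $\{I=0\}$, hence dense, and $\overline{O}=\{I=0\}$.

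\emph{Step 3: the case when $C_\phi$ is a rational nodal quintic.} Here $\Delta$ vanishes throughout $O$ by Theorem~\ref{thm:inv}(i) and Lemma~\ref{lem:orbits}, so $\overline{O}\subseteq\{\Delta=0\}$; since $\{\Delta=0\}$ is irreducible of dimension $49$ by Step~1 and $\overline{O}$ is irreducible, it is enough to show $\dim O=49$, i.e.\ that the stabiliser of $\phi$ in $\GL(V)\times\GL(W)$ is $1$-dimensional. It contains the $1$-dimensional torus $\{(\lambda I,\mu I):\lambda^2\mu=1\}$. Conversely, if $(M,N)$ fixes $\phi$ then $N$ preserves the ideal of $C_\phi$ --- the $\GL(V)$-factor only replaces the Pfaffians of $\phi$ by an invertible linear combination of themselves --- so the stabiliser maps with finite kernel to $\{N\in\GL(W):N(C_\phi)=C_\phi\}$, which modulo scalars is $\Aut(C_\phi,\mathcal{O}_{C_\phi}(1))$; and this group is finite, because $\Aut(C_\phi)\cong\mathbb{G}_m\rtimes\mathbb{Z}/2$ acts on $\Pic^5(C_\phi)\cong\mathbb{G}_m$ through a non-trivial character, so only finitely many of its elements fix the class $\mathcal{O}_{C_\phi}(1)$. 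Hence $\dim\overline{O}=\dim(\GL(V)\times\GL(W))-1=49$, giving $\overline{O}=\{\Delta=0\}$. (The same computation works uniformly in Step~2, where $\Aut(C_\phi,\mathcal{O}_{C_\phi}(1))$ is again finite, containing translation by $E[5]$.) I expect Step~3 to be the main obstacle: one has to make sure that the positive-dimensional automorphism group of the \emph{abstract} curve $C_\phi$ does not enlarge the stabiliser, the key point being that the extra automorphisms do not fix the hyperplane class. An alternative would be to prove directly that the locus of $\phi\in\{\Delta=0\}$ for which $C_\phi$ is reducible, non-reduced, or cuspidal is Zariski-closed, so that $O$ is once more open and dense in the irreducible variety $\{\Delta=0\}$; but verifying these closedness statements for the family $\{C_\phi\}$ takes some care.
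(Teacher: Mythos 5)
Your proof is correct, but it takes a genuinely different route from the paper's. The paper does not re-derive the existence of the invariant $I$ cutting out the orbit closure: that is quoted from the earlier paper \cite[Lemma 4.10]{g1inv}, and the proof here consists only of observing that the listed invariants vanish at $\phi$ (by Theorem~\ref{thm:inv}) and are irreducible, the irreducibility argument being exactly your Step~1 (factors of an invariant are themselves invariants because $\SL(V)\times\SL(W)$ is connected with no non-trivial characters, hence lie in $K[c_4,c_6]$). Your Steps~2 and~3 replace that citation by a direct identification of the orbit closure: in the smooth case you exhibit the orbit as the dense open subset $\{I=0\}\cap\{\Delta\neq 0\}$ of the irreducible hypersurface $\{I=0\}$ using Lemma~\ref{lem:orbits} and Theorem~\ref{thm:inv}, and in the nodal case you match dimensions by showing the stabiliser of $\phi$ in $\GL(V)\times\GL(W)$ is $1$-dimensional. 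This buys a self-contained proof (modulo the transitivity statement and Theorem~\ref{thm:inv}) and some extra geometric content (the nodal orbit really has codimension $1$), at the cost of the stabiliser analysis, where you assert two standard facts without proof: that $\Aut(C_\phi)\cong\mathbb{G}_m\rtimes\Z/2\Z$ for the nodal quintic acts on degree-$5$ line bundle classes with finite stabilisers (true: in suitable coordinates $t\in\mathbb{G}_m$ moves the class $[5P]$ by multiplication by $t^{5}$ on $\Pic^5\cong\mathbb{G}_m$, so the stabiliser is $\mu_5$, possibly extended by the involution), and that the kernel of the projection of the stabiliser to $\GL(W)$ is finite. The latter is the one place where a line of justification is genuinely needed: it follows from the equivariance of the Pfaffian map $P_2$ of~(\ref{def:P2}) together with the linear independence of the five Pfaffian quadrics, which force $\wedge^4 M$ to act trivially on $V^*$ and hence $M=\pm I_V$; with that inserted, your argument is complete.
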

\begin{proof}
  The existence of $I$ is proved in \cite[Lemma 4.10]{g1inv}.  The
  invariants listed vanish at $\phi$ by Theorem~\ref{thm:inv} and are
  irreducible in $K[c_4,c_6]$. They are therefore irreducible in
  $K[\wedge^2 V \otimes W]$ since any factors would themselves have to
  be invariants. We use here that $\SL(V) \times \SL(W)$ is connected
  and has no $1$-dimensional rational representations. Alternatively
  we can prove irreducibility by restricting to the Weierstrass models
  in \cite[Section 6]{g1inv}.
\end{proof}

\begin{lem}
\label{lem:van}
Let $I$ be a non-constant homogeneous invariant.  Then there exists
$\phi \in \wedge^2 V \otimes W$ with $I(\phi) = 0$ and $C_\phi$ either
an elliptic normal quintic or a rational nodal quintic.
\end{lem}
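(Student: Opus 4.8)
The plan is to argue by contradiction, using the invariant $I$ to cut out a hypersurface in the space of genus one models and then showing this hypersurface cannot avoid the locus where $C_\phi$ is an elliptic normal quintic or rational nodal quintic. First I would recall that the non-singular genus one models form a Zariski open subset $U$ of $\wedge^2 V \otimes W$ (its complement being $\{\Delta = 0\}$ by Theorem~\ref{thm:inv}(i)), and that every elliptic normal quintic arises as $C_\phi$ for some $\phi \in U$. The invariant $I$, being non-constant and homogeneous, has a nonempty zero locus $Z(I)$, which is a hypersurface and in particular has codimension $1$. The key point to establish is that $Z(I)$ meets the locus of models with $C_\phi$ an elliptic normal quintic or rational nodal quintic; once we have a point of $Z(I)$ lying in the closure of the non-singular locus, a short argument with the classification of orbits will finish the job.

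The main step is therefore to understand $Z(I) \cap \overline{U}$. Suppose for contradiction that every $\phi \in Z(I)$ has $C_\phi$ neither an elliptic normal quintic nor a rational nodal quintic. By Theorem~\ref{thm:inv}(i), if such a $\phi$ were non-singular then $C_\phi$ would be an elliptic normal quintic, contrary to assumption; hence $Z(I) \subseteq \{\Delta = 0\}$. Since $\Delta$ is irreducible in $K[\wedge^2 V \otimes W]$ (by the argument in the proof of Lemma~\ref{lem:codim1}, or by Lemma~\ref{lem:codim1} itself applied to a rational nodal quintic), and $Z(I)$ has pure codimension $1$, the Nullstellensatz forces $\Delta$ to divide a power of $I$, hence $\Delta \mid I$ up to scalar, and in fact $Z(I) = \{\Delta = 0\}$ if $I$ is itself irreducible; in general $\Delta$ is one of the irreducible factors of $I$. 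But now I would produce a model $\phi_0$ with $\Delta(\phi_0) = 0$, $I(\phi_0) = 0$, and $C_{\phi_0}$ a rational nodal quintic, contradicting the assumption. Concretely, $\{\Delta = 0\}$ is the closure of the orbit of any rational nodal quintic model (Lemma~\ref{lem:codim1}), so such $\phi_0$ exist and lie in $Z(I)$; this is the desired contradiction.

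The cleanest route, which I would actually write up, avoids the case analysis: restrict $I$ to the two-dimensional family of Weierstrass models described in \cite[Section~6]{g1inv}, where $c_4$ and $c_6$ restrict to the classical quantities in $a_1, \ldots, a_6$ and the condition "$C_\phi$ is an elliptic normal quintic or rational nodal quintic" translates into a nondegeneracy condition satisfied on a dense open set. Since $I$ is a non-constant invariant, it is (up to scalar) a non-constant polynomial in $c_4$ and $c_6$ by Theorem~\ref{thm:inv}, so its restriction is a non-constant polynomial in the Weierstrass coefficients and therefore has a zero $\phi_0$ in that dense open set; this $\phi_0$ has $I(\phi_0) = 0$ with $C_{\phi_0}$ of the required type. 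I expect the main obstacle to be the bookkeeping at the boundary: one must check that a generic zero of $I$ inside the Weierstrass family really does give an elliptic normal quintic or rational nodal quintic rather than a more degenerate curve, and handle the special values $j = 0$ and $j = 1728$ (where the relevant invariant from Lemma~\ref{lem:codim1} is $c_4$ or $c_6$) as genuine cases rather than sweeping them under a genericity statement.
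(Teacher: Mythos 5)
Your contradiction argument (the first two paragraphs) is a complete and correct proof, and it takes a genuinely different route from the paper's. The paper's own proof uses Theorem~\ref{thm:inv} to regard $I$ as a homogeneous element of $K[c_4,c_6]$, reduces to the case $I$ irreducible there, observes that up to scalars the only possibilities are $c_4$, $c_6$, $\Delta$, or $c_4^3-j\Delta$ with $j\neq 0,1728$, and for each exhibits an elliptic normal quintic with the appropriate $j$-invariant (or a rational nodal quintic when $I=\Delta$). Your dichotomy avoids that classification entirely: either $I$ vanishes at some non-singular model, in which case that model works, or $Z(I)\subseteq\{\Delta=0\}$, in which case irreducibility of $\Delta$ in $K[\wedge^2 V\otimes W]$ (from the proof of Lemma~\ref{lem:codim1}) forces $I$ to be a scalar times a power of $\Delta$, so any rational nodal quintic model (e.g.\ $u_1(0,1,1,1,1)$ from Section~\ref{sec:denom}) is a zero of $I$ of the required type; this uses only Theorem~\ref{thm:inv}(i), irreducibility of $\Delta$, and existence of a rational nodal model, which is an economy over the case analysis. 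Two small points: your Nullstellensatz step is stated with the divisibility reversed --- from $Z(I)\subseteq Z(\Delta)$ one gets $\Delta\in\sqrt{(I)}$, i.e.\ $I\mid\Delta^m$, not $\Delta\mid I^m$ --- but since $\Delta$ is prime either reading yields $\Delta\mid I$, so nothing breaks. Also, the ``cleanest route'' you say you would actually write up (restricting to the Weierstrass family) is in substance the paper's proof, and the boundary bookkeeping you defer --- checking that a zero of the restricted $I$ can be chosen away from the cuspidal locus $c_4=c_6=0$, including the cases $j=0,1728$ --- is precisely what the paper's short explicit case analysis supplies; if you prefer that version to your contradiction argument, you must carry out that check rather than flag it.
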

\begin{proof}
  We may assume that $I$ is irreducible in $K[c_4,c_6]$. So up to
  scalar multiples we have $I = c_4, c_6, \Delta$ or $c_4^3 - j
  \Delta$ with $j \not= 0,1728$. We take $C_\phi$ to be an elliptic
  normal quintic with the appropriate $j$-invariant, or in the case $I
  = \Delta$ a rational nodal quintic.
\end{proof}

The covariants we need to describe the covering map are $\SL(V) \times
\SL(W)$-equivariant polynomial maps $\wedge^2 V \otimes W \to S^{5d}
W$ for $d=1,2,3$. More generally we defined a {\em covariant} to be an
$\SL(V) \times \SL(W)$-equivariant polynomial map $\wedge^2 V \otimes
W \to Y$ where $Y$ is a rational representation of $\GL(V) \times
\GL(W)$. In all our examples $Y$ will be {\em homogeneous} by which we
mean there exist integers $r$ and $s$ such that the morphism $\rho_Y :
\GL(V) \times \GL(W) \to \GL(Y)$ satisfies $\rho_Y(\lambda I_V, \mu
I_W)= \lambda^r \mu^s I_Y$ for all $\lambda, \mu \in K^\times$.

\begin{lem}
\label{wtlemma}
Let $Y$ be a homogeneous rational representation of $\GL(V) \times
\GL(W)$ with degrees $(r,s)$. If $F: \wedge^2 V \otimes W \to Y$ is a
homogeneous covariant then there exist integers $p$ and $q$ called the
{\em weights} of $F$ such that
\begin{equation}
\label{wtformula}
\begin{aligned}
2 \deg F & = 5p+r \\ 
\deg F & = 5q+s.
\end{aligned}
\end{equation}
\end{lem}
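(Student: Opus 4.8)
The plan is to extract the two congruences in \eqref{wtformula} purely from the behaviour of $F$ under scalar matrices in $\SL(V) \times \SL(W)$; no geometry of genus one models is needed. Write $D = \deg F$, and note that if $F \equiv 0$ there is nothing to prove, so I would fix some $\phi_0 \in \wedge^2 V \otimes W$ with $F(\phi_0) \neq 0$. Since $\charic K$ does not divide $5$, the field $K$ contains a primitive fifth root of unity $\zeta$.

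The key observation is that, although scalar matrices $\lambda I_V$ and $\mu I_W$ generally do not lie in $\SL(V)$ or $\SL(W)$, the matrices $\zeta^a I_V$ and $\zeta^b I_W$ do, since their determinants are $\zeta^{5a} = \zeta^{5b} = 1$. Under the natural action of $\GL(V) \times \GL(W)$ on $\wedge^2 V \otimes W$, a pair of scalar matrices $(\lambda I_V, \mu I_W)$ acts as multiplication by $\lambda^2 \mu$, because $\lambda I_V$ scales $\wedge^2 V$ by $\lambda^2$ while $\mu I_W$ scales $W$ by $\mu$. In particular $(\zeta I_V, I_W)$ acts on $\wedge^2 V \otimes W$ as the scalar $\zeta^2$ and $(I_V, \zeta I_W)$ acts as the scalar $\zeta$.

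I would then apply the covariance of $F$ to these two elements, combined with homogeneity of $F$ of degree $D$ and the hypothesis $\rho_Y(\lambda I_V, \mu I_W) = \lambda^r \mu^s I_Y$, to obtain
\begin{align*}
\zeta^{2D} F(\phi_0) &= F(\zeta^2 \phi_0) = F\bigl((\zeta I_V, I_W)\cdot\phi_0\bigr) = \rho_Y(\zeta I_V, I_W)\, F(\phi_0) = \zeta^r F(\phi_0), \\
\zeta^{D} F(\phi_0) &= F(\zeta \phi_0) = F\bigl((I_V, \zeta I_W)\cdot\phi_0\bigr) = \rho_Y(I_V, \zeta I_W)\, F(\phi_0) = \zeta^s F(\phi_0).
\end{align*}
Since $F(\phi_0) \neq 0$ this forces $\zeta^{2D} = \zeta^r$ and $\zeta^D = \zeta^s$, hence $2D \equiv r$ and $D \equiv s \pmod 5$. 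Setting $p = (2D - r)/5$ and $q = (D - s)/5$, which are integers, gives exactly \eqref{wtformula}.

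There is no serious obstacle here: the argument is a one-line scaling computation. The only points to watch are that one must use fifth roots of unity rather than arbitrary scalars in order to stay within $\SL(V) \times \SL(W)$ — and correspondingly remember that a scalar acts on $\wedge^2 V$ with a square — and that one needs $F \not\equiv 0$ to pass from the identity $\zeta^{2D} F(\phi_0) = \zeta^r F(\phi_0)$ to the congruence of exponents modulo $5$. Uniqueness of the weights $p$ and $q$ is then immediate from \eqref{wtformula} once $r$, $s$ and $\deg F$ are fixed.
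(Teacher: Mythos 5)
Your argument is correct, and it is the standard proof of this kind of weight lemma: the paper itself gives no argument here but simply cites \cite[Lemma 2.2]{invenqI}, where essentially the same scalar-matrix computation is carried out. Your use of fifth roots of unity $\zeta I_V$, $\zeta I_W$ to stay inside $\SL(V)\times\SL(W)$, together with the observation that $(\lambda I_V,\mu I_W)$ acts on $\wedge^2 V\otimes W$ as $\lambda^2\mu$, is exactly the right mechanism, and the congruences $2\deg F\equiv r$, $\deg F\equiv s \pmod 5$ are all the stated lemma asserts. One small remark: in the source the weights $(p,q)$ are not merely the integers solving \eqref{wtformula} but also record the transformation law $F(g\phi)=(\det g_V)^p(\det g_W)^q\rho_Y(g)F(\phi)$ for all $g\in\GL(V)\times\GL(W)$ (obtained by writing $\GL$ as scalars times $\SL$); your congruence argument does not by itself establish that stronger property, but since the lemma as quoted only claims the numerical relations, your proof is complete for the statement given.
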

\begin{proof}
See \cite[Lemma 2.2]{invenqI}. 
\end{proof}

For example the Pfaffian map~(\ref{def:P2}) is a covariant of degree
$2$ with weights $(p,q) = (1,0)$.  The covariants in the case $Y$ is
the trivial representation are the invariants as described in
Theorem~\ref{thm:inv}.  For general $Y$ the covariants form a module
over the ring of invariants $K[c_4,c_6]$.

In Section~\ref{sec:discov} we recall our method~\cite{invenqI} for
studying the covariants via their restrictions to the Hesse family,
i.e. the universal family over $X(5)$.  These restrictions are nearly
characterised by their invariance properties under an appropriate
action of $\SL_2(\Z/5\Z)$. In Sections~\ref{sec:frac}
and~\ref{sec:denom} we make this relationship precise. Thus our work
resolves, albeit in one particular case, what is described in
\cite[Chapter V,\S22]{AR} as the ``mysterious role of invariant
theory''.  We give examples for a range of different $Y$ in
Section~\ref{sec:ex}. In Section~\ref{sec:indep} we show how a free
basis for the $K[c_4,c_6]$-module of covariants for $Y$ may be
characterised in terms of its specialisations to the genus one models
$\phi$ of the form considered in Lemma~\ref{lem:codim1}.  In
Section~\ref{sec:quintics} we relate the covariants in the case
$Y=S^{5} W$ to work of Hulek \cite{Hu} and finally in
Section~\ref{sec:covmap} we give our formula for the covering map.

\section{Discrete covariants}
\label{sec:discov}

In this section we recall some of the theory from \cite{invenqI}. We
then state our main result on the relationship between covariants and
discrete covariants.

We take $n \ge 5$ an odd integer.  The {\em Heisenberg group} of level
$n$ is
\[ H_n = \langle \sigma, \tau | \sigma^n = \tau^n = [ \sigma,
[\sigma,\tau]] = [\tau, [\sigma,\tau]] = 1 \rangle. \] It is a
non-abelian group of order $n^3$ and its centre is a cyclic group of
order $n$ generated by $\zeta=[\sigma,\tau] = \sigma \tau \sigma^{-1}
\tau^{-1}$.  In \cite[Section 3]{invenqI} we defined a group
homomorphism $s_\beta : \GL_2(\Z/n\Z) \to \Aut(H_n)$ by \[ s_\beta ((
\begin{smallmatrix} a & b \\ c & d \end{smallmatrix} )) 
: \sigma \mapsto \zeta^{-ac/2} \sigma^a \tau^c \, ;
\, \, \tau \mapsto \zeta^{-bd/2} \sigma^b \tau^d.\]
where the exponents are read as integers mod $n$.

\begin{defn}
\label{defexheisen}
The {\em extended Heisenberg group} is the semi-direct product
\[ H_n^+ = H_n \ltimes \SL_2(\Z/n\Z), \]
with group law $ (h,\gamma) (h',\gamma')= (h \, s_\beta(\gamma)h', 
\gamma \gamma')$.
\end{defn}

The {\em Schr\"odinger representation} $\theta : H_n \to \SL_n(K)$
maps $\sigma$ and $\tau$ to the matrices~(\ref{heismats}).  These
matrices have commutator $\theta(\zeta) = \zeta_n I_n$.

\begin{thm}
\label{thm:exheis}
(i) The Schr\"odinger representation $\theta:H_n \to \SL_n(K)$ extends
uniquely to a representation $\theta^+:H_n^+ \to \SL_n(K)$. \\
(ii) The normaliser of $\theta(H_n)$ in $\SL_n(K)$ is
$\theta^+(H_n^+)$.
\end{thm}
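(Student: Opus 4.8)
The plan is to build everything on the Stone--von Neumann property of the Schr\"odinger representation: $\theta$ is the unique (up to conjugacy) irreducible representation of $H_n$ on which the central generator $\zeta$ acts by the scalar $\zeta_n$, and it is faithful, with $\theta$ mapping $Z(H_n)$ onto the scalar matrices $\{\zeta_n^k I_n\}$. I would first record this by the standard character/dimension count for the finite Heisenberg group ($n^2$ one-dimensional characters plus $(n-1)$ irreducibles of dimension $n$, one for each nontrivial central character).

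For existence in (i): fix $\gamma \in \SL_2(\Z/n\Z)$. Because $\det\gamma = 1$ the automorphism $s_\beta(\gamma)$ fixes $\zeta = [\sigma,\tau]$, so the twist $h \mapsto \theta(s_\beta(\gamma)h)$ has the same central character as $\theta$ and is therefore isomorphic to it. By Schur's lemma there is $U_\gamma \in \GL_n(K)$, unique up to scalar, with $U_\gamma\,\theta(h)\,U_\gamma^{-1} = \theta(s_\beta(\gamma)h)$ for all $h$; rescaling so that $\det U_\gamma = 1$ leaves exactly the ambiguity of multiplying by an $n$th root of unity. Setting $\theta^+(h,\gamma) = \theta(h)U_\gamma$ then respects the group law of $H_n^+$ up to a $2$-cocycle of $\SL_2(\Z/n\Z)$ valued in the $n$th roots of unity. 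To kill this cocycle I would write down $\theta^+$ explicitly on the generators $S = (\begin{smallmatrix} 0 & -1 \\ 1 & 0 \end{smallmatrix})$ and $T = (\begin{smallmatrix} 1 & 1 \\ 0 & 1 \end{smallmatrix})$ of $\SL_2(\Z/n\Z)$ --- $\theta^+(T)$ diagonal with entries a quadratic character of $\zeta_n$, and $\theta^+(S)$ a normalised discrete Fourier matrix (entries $\zeta_n^{ij}$, divided by a Gauss sum so that the determinant is $1$) --- and then check (a) that these conjugate $\theta(\sigma),\theta(\tau)$ as prescribed by $s_\beta(S),s_\beta(T)$, and (b) that they satisfy the defining relations of $\SL_2(\Z/n\Z)$. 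Together with $\theta$ on $H_n$ and the cross-relations this yields the homomorphism $\theta^+:H_n^+ \to \SL_n(K)$. Uniqueness is then formal: two extensions agree on $H_n$ and differ on the $\SL_2(\Z/n\Z)$-part by a homomorphism $\SL_2(\Z/n\Z) \to \mu_n$ (Schur plus the determinant normalisation), which is trivial because $\SL_2(\Z/n\Z)$ is perfect for $n \ge 5$.

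For (ii): let $N$ be the normaliser of $\theta(H_n)$ in $\SL_n(K)$. The inclusion $\theta^+(H_n^+) \subseteq N$ is clear, since $\theta(H_n)$ normalises itself and each $\theta^+(\gamma) = U_\gamma$ conjugates $\theta(H_n)$ onto itself by construction. Conversely, take $g \in N$. Conjugation by $g$ induces an automorphism $\alpha$ of $\theta(H_n) \cong H_n$, and since conjugation fixes scalar matrices and $\theta$ identifies $Z(H_n)$ with scalars, $\alpha$ fixes $Z(H_n)$ pointwise. Now the automorphisms of $H_n$ fixing the centre pointwise preserve the commutator pairing on $H_n/Z(H_n) \cong (\Z/n\Z)^2$, hence map to $\operatorname{Sp}_2(\Z/n\Z) = \SL_2(\Z/n\Z)$; this map is onto (each $s_\beta(\gamma)$ is a lift) with kernel $\Hom(H_n/Z(H_n),Z(H_n)) = \operatorname{Inn}(H_n)$, so the group of such automorphisms is exactly $\operatorname{Inn}(H_n) \rtimes s_\beta(\SL_2(\Z/n\Z))$. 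Thus $\alpha$ equals conjugation by $\theta^+(h,\gamma)$ for some $(h,\gamma) \in H_n^+$, so $\theta^+(h,\gamma)^{-1}g$ centralises $\theta(H_n)$ and hence, by irreducibility of $\theta$, is a scalar; lying in $\SL_n(K)$ it equals $\zeta_n^k I_n = \theta^+(\zeta^k,1)$ for some $k$, and therefore $g \in \theta^+(H_n^+)$.

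I expect the main obstacle to be the explicit part of (i): verifying that the candidate matrices $\theta^+(S),\theta^+(T)$ satisfy the relations presenting $\SL_2(\Z/n\Z)$ --- equivalently, the well-definedness of the Weil representation over $K$. This is where the hypotheses really bite, through divisions by $2$ (using $n$ odd) and the non-vanishing of quadratic Gauss sums (using $\charic K \nmid 6n$). The one genuinely structural ingredient, used in (ii), is the description of the automorphism group of $H_n$ fixing the centre; I would either invoke the standard theory of finite Heisenberg groups or prove it directly by the lifting argument sketched above. Everything else reduces to Schur's lemma applied to the irreducible representation $\theta$.
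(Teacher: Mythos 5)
Your proposal is correct and is essentially the argument behind the cited result: the paper itself only refers to \cite[Theorem~3.6]{invenqI}, and the matrices displayed in the remark immediately after the theorem (the Fourier matrix for $S$ and the diagonal matrix with entries $\zeta_n^{i^2/2}$ for $T$, each normalised to determinant $1$) are exactly the Weil-representation generators you propose to verify, with Stone--von Neumann and Schur's lemma doing the rest. One caveat: your uniqueness step asserts that $\SL_2(\Z/n\Z)$ is perfect for all $n\ge 5$, but this fails when $3\mid n$ (e.g.\ $\SL_2(\Z/9\Z)$ has abelianisation $\Z/3$, so $\Hom(\SL_2(\Z/n\Z),\mu_n)$ is nontrivial and one can twist an extension by such a character); the argument is complete precisely when $\gcd(n,6)=1$, which covers the case $n=5$ actually used in the paper.
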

\begin{proof} See \cite[Theorem 3.6]{invenqI}.
\end{proof}

\begin{rem}
  (i) The representation $\theta^+$ of Theorem~\ref{thm:exheis} is
  given on the generators $S = \left( \begin{smallmatrix} 0 & 1 \\ - 1
      & 0 \end{smallmatrix} \right)$ and $T = \left(
    \begin{smallmatrix} 1 & 1 \\ 0 & 1 \end{smallmatrix} \right)$ for
  $\SL_2(\Z/n\Z)$ by suitable scalar multiples of
\[ 
 \left( \begin{array}{c@{\quad}c@{\quad}c@{\quad}c@{\quad}c}
    1   &   1  &  1  & \cdots &      1     \\
   1   & \zeta_n  &  \zeta_n^2   & \cdots &      \zeta_n^{-1}   \\
    1   & \zeta_n^2   & \zeta_n^4 & \cdots &      \zeta_n^{-2}  \\ 
 \vdots & \vdots & \vdots  &        &   \vdots   \\
    1   &    \zeta_n^{-1} \!\!\! &  \zeta_n^{-2} \!\!\! & \cdots & \zeta_n  
\end{array} \right)  \quad \text{ and } \quad 
\left( \begin{array}{c@{\quad}c@{\,\,\,}c@{\,\,\,}c@{\,\,\,}c}
    1   &    0   &    0    & \cdots &      0     \\
    0   & \zeta^{1/2}_n \!\!\! &    0    & \cdots &      0     \\
    0   &    0   & \zeta_n^{2^2/2} \!\!\! & \cdots &      0     \\ 
 \vdots & \vdots & \vdots  &        &   \vdots   \\
    0   &   0    &    0    & \cdots & \zeta_n^{1/2}  
\end{array} \right). \]
(ii) The Schr\"odinger representation has $\phi(n)$ conjugates
obtained by either changing our choice of $\zeta_n$ or precomposing
with an automorphism of $H_n$. We may apply Theorem~\ref{thm:exheis}
to any one of these representations.
\end{rem}

The Hesse family of elliptic normal quintics (studied for example in
\cite{g1hess}, \cite{Hu}) is given by
\begin{equation}
\label{uab}
\begin{array}{lccl}
u:& \Aff^2 & \to & \wedge^2 V \otimes W \\
& (a,b) & \mapsto & a \sum (v_{1} \wedge v_{4}) w_0 +
b  \sum (v_{2} \wedge v_{3}) w_0
\end{array} 
\end{equation}
where the sums are taken over all cyclic permutations of the
subscripts mod $5$.  We define actions of the Heisenberg group $H_5$
on $V$ and $W$ so that the Hesse models $u(a,b)$ are $H_5$-invariant.
\begin{equation}
\label{thetavw}
\begin{array}{lll} \smallskip
\theta_V: H_5 \to \SL(V)\,; & \sigma : v_i \mapsto \zeta_5^{2i} v_i\,;
& \tau : v_i \mapsto v_{i+1} \\
\theta_W: H_5 \to \SL(W)\,; & \sigma : w_i \mapsto \zeta_5^{i} w_i\,;
&  \tau : w_i \mapsto w_{i+1}.
\end{array} 
\end{equation}
Since $\theta_V$ and $\theta_W$ are conjugates of the Schr\"odinger
representation they extend by Theorem~\ref{thm:exheis} to
representations of $H_5^+$. By abuse of notation we continue to write
these representations as $\theta_V$ and $\theta_W$.

Let $Y$ be a homogeneous rational representation of $\GL(V) \times
\GL(W)$. Then $\theta_V$ and $\theta_W$ define an action of $H_5^+$ on
$Y$ and so an action of $\Gamma = \SL_5(\Z/5\Z)$ on $Y^{H_5}$. Taking
$Y = \wedge^2 V \otimes W$ the action of $\Gamma$ on $(\wedge^2 V
\otimes W)^{H_5} = {\operatorname{Im}}(u)$ is described by a
representation $\chi_1 : \Gamma \to \GL_2(K)$.

\begin{defn}
  Let $\pi : \Gamma \to \GL(Z)$ be a representation. A {\em discrete
    covariant} for $Z$ is a polynomial map $f : \Aff^2 \to Z$
  satisfying $f \circ \chi_1(\gamma) = \pi (\gamma) \circ f$ for all
  $\gamma \in \Gamma$.
\end{defn} 

\begin{thm} 
\label{thm:dcov}
Let $F : \wedge^2 V \otimes W \to Y$ be a covariant. Then $f = F \circ
u : \Aff^2 \to Y^{H_5}$ is a discrete covariant. Moreover $F$ is
uniquely determined by $f$.
\end{thm}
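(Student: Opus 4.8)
The plan is to prove the two assertions separately: that $f = F \circ u$ is genuinely a discrete covariant, and that $F$ can be recovered from $f$.

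For the first assertion, note that $f$ is automatically a polynomial map $\Aff^2 \to Y$, so only its target and its equivariance need checking. The restrictions of $\theta_V$ and $\theta_W$ to $H_5$ take values in $\SL(V)$ and $\SL(W)$, and by construction each Hesse model $u(a,b)$ is fixed by $H_5$; since $F$ is $\SL(V) \times \SL(W)$-equivariant this gives $h \cdot F(u(a,b)) = F(h \cdot u(a,b)) = F(u(a,b))$ for every $h \in H_5$, so $f(a,b) = F(u(a,b))$ lies in $Y^{H_5}$. For the equivariance I would fix $\gamma \in \Gamma$ and lift it to $g \in H_5^+$. The pair $(\theta_V(g), \theta_W(g))$ lies in $\SL(V) \times \SL(W)$; it carries $u(a,b)$ to $u(\chi_1(\gamma)(a,b))$ by the very definition of $\chi_1$, and it acts on $Y^{H_5}$ as $\pi(\gamma)$, this action being independent of the lift because two lifts differ by an element of $H_5$, which fixes $Y^{H_5}$ pointwise. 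Applying the equivariance of $F$ to this pair then yields $f(\chi_1(\gamma)(a,b)) = F(u(\chi_1(\gamma)(a,b))) = (\theta_V(g), \theta_W(g)) \cdot F(u(a,b)) = \pi(\gamma)(f(a,b))$, as required.

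For uniqueness, suppose $F'$ is a second covariant to the same $Y$ with $F' \circ u = F \circ u$ and set $G = F - F'$, which is again a covariant. Splitting $G$ into homogeneous components --- each still a covariant, since $\SL(V) \times \SL(W)$ acts linearly on source and target and hence preserves degree --- I may assume $G$ is homogeneous, say of degree $d$; then its zero locus $Z(G)$ is the common zero set of a tuple of degree-$d$ forms, hence a cone. By $\SL(V) \times \SL(W)$-equivariance this cone is $\SL(V) \times \SL(W)$-stable, and since $K$ is algebraically closed $\GL(V) \times \GL(W) = (K^\times \times K^\times) \cdot (\SL(V) \times \SL(W))$, with the central scalars acting on $\wedge^2 V \otimes W$ by scaling; a cone absorbs scaling, so $Z(G)$ is in fact $\GL(V) \times \GL(W)$-stable. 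Since $Z(G) \supseteq \operatorname{Im}(u)$ it therefore contains the whole $\GL(V) \times \GL(W)$-orbit of $\operatorname{Im}(u)$. That orbit is dense: by Lemma~\ref{lem:orbits}, together with the fact that the Hesse family is the universal family over $X(5)$ and so realises elliptic normal quintics of every $j$-invariant, every non-singular genus one model is $\GL(V) \times \GL(W)$-equivalent to a Hesse model, so $\GL(V) \times \GL(W) \cdot \operatorname{Im}(u)$ contains the dense open set $\{ \Delta \neq 0 \}$. Being closed and containing a dense set, $Z(G) = \wedge^2 V \otimes W$; as $K$ is infinite, a polynomial vanishing at every point is zero, so $G = 0$ and $F' = F$.

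The routine parts are the polynomiality of $f$ and the diagram chase through the lift $g$; the one step with genuine content is the density of the $\GL(V) \times \GL(W)$-orbit of $\operatorname{Im}(u)$. The subtlety to watch is that $F$, and hence $G$, is only $\SL(V) \times \SL(W)$-equivariant, so one cannot simply propagate the vanishing of $G$ along the $\GL$-orbit by equivariance; the remedy is to argue with the \emph{zero locus} $Z(G)$, which --- being cut out by homogeneous forms --- is a cone and therefore absorbs the central scalars, so that $\SL$-equivariance together with $\GL = K^\times \cdot \SL$ makes $Z(G)$ a $\GL(V) \times \GL(W)$-stable closed set. One could avoid invoking universality of the Hesse family by a dimension count: a generic Hesse model is non-singular with a one-dimensional stabiliser, hence has a $49$-dimensional orbit, and since the $j$-invariant is non-constant on the Hesse family, Lemma~\ref{lem:orbits} shows that $\operatorname{Im}(u)$ meets infinitely many distinct $49$-dimensional orbits, forcing its irreducible orbit closure to exhaust the $50$-dimensional space $\wedge^2 V \otimes W$.
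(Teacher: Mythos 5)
Your proof is correct. Note that the paper prints no argument for this statement: it simply cites Theorem 4.3 of the companion paper \cite{invenqI}, so the honest comparison is with the machinery the paper uses elsewhere. Your first half (that $f$ lands in $Y^{H_5}$ and is $\Gamma$-equivariant, with independence of the choice of lift to $H_5^+$ because lifts differ by elements of $H_5$ acting trivially on $H_5$-invariants) is the routine check and is fine, since $\theta_V^+$, $\theta_W^+$ take values in $\SL(V)$, $\SL(W)$ so the $\SL(V)\times\SL(W)$-equivariance of $F$ really does apply. The uniqueness half is where the content lies, and your argument is sound; the density input you need --- every non-singular model is $\GL(V)\times\GL(W)$-equivalent to a Hesse model, and non-singular models are dense --- is exactly what this paper invokes (via \cite[Proposition 4.1]{g1hess} and Theorem~\ref{thm:inv}) in its proof of Theorem~\ref{mainthm}, while its closest in-text analogue of the present statement, Proposition~\ref{prop1}, handles extension and uniqueness of \emph{rational} maps by proving $(G\times A)/N \to X$ is birational; your direct ``orbit of $\operatorname{Im}(u)$ is dense'' argument is more elementary and entirely adequate for uniqueness. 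Two small points: when you reduce to homogeneous $G$, you should say why each homogeneous component of $G$ still vanishes on $\operatorname{Im}(u)$ --- this holds because $u$ is linear, so $G_d\circ u$ is the degree-$d$ part of $G\circ u$ in $(a,b)$; and since $\operatorname{Im}(u)$ is itself a $2$-dimensional linear subspace (hence a cone), writing $(g_V,g_W)=(\lambda s_V,\mu s_W)$ with $(s_V,s_W)\in\SL(V)\times\SL(W)$ shows the $\SL(V)\times\SL(W)$-orbit of $\operatorname{Im}(u)$ already contains every non-singular model, so you could skip the cone argument on $Z(G)$ altogether. Your alternative dimension-count for density is also valid, since $j$ is constant on $\GL(V)\times\GL(W)$-orbits and non-constant on the Hesse family.
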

\begin{proof}
See \cite[Theorem 4.3]{invenqI}.
\end{proof}

For any given $Y$ the discrete covariants may be computed using
invariant theory for the finite groups $H_5$ and $\SL_2(\Z/5\Z)$.  We
say that a discrete covariant $f : \Aff^2 \to Y^{H_5}$ {\em is a
  covariant} if it arises from a covariant $F: \wedge^2 V \otimes W
\to Y$ as described in Theorem~\ref{thm:dcov}. It is important to note
that not every discrete covariant is a covariant.  For example, taking
$Y$ to be the trivial representation, the ring of invariants is
$K[c_4,c_6]$ as described in Theorem~\ref{thm:inv} whereas the ring of
discrete invariants is generated by
\begin{equation}
\label{discinv}
\begin{aligned}
D & =  ab(a^{10}-11a^5 b^5-b^{10}) \\ 
c_4 & = 
  a^{20} + 228 a^{15} b^5 + 494 a^{10} b^{10} - 228 a^5 b^{15} + b^{20} \\
c_6 & = 
 -a^{30} + 522 a^{25} b^5 + 10005 a^{20} b^{10} 
+ 10005 a^{10} b^{20} - 522a^5 b^{25} - b^{30}
\end{aligned}
\end{equation}
subject only to the relation $c_4^3-c_6^2 = 1728 D^5$.  We use the
same notation for both a covariant and its restriction to the Hesse
family. By the uniqueness part of Theorem~\ref{thm:dcov} this should
not cause any confusion.

There are essentially two ways in which a discrete covariant might
fail to be a covariant. The first is that the weights computed
using~(\ref{wtformula}) might not be integers.  For example $D$ has
weights $(p,q)=(24/5,12/5)$ and so cannot be an invariant. The second
is that denominators might be introduced.  More precisely we prove the
following theorem in Section~\ref{sec:frac}.

\begin{thm}
\label{mainthm}
Let $f:\Aff^2 \to Y^{H_5}$ be an integer weight discrete covariant.
Then $\Delta^k f$ is a covariant for some $k \ge 0$.
\end{thm}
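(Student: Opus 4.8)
The plan is to show that any integer-weight discrete covariant $f:\Aff^2 \to Y^{H_5}$ can be cleared of denominators by multiplying by a power of $\Delta$, by exploiting the transitivity statement of Lemma~\ref{lem:orbits} together with the codimension-one orbit structure of Lemmas~\ref{lem:codim1} and~\ref{lem:van}. First I would reinterpret what $f$ encodes: by Theorem~\ref{thm:dcov} and the invariant-theoretic description of discrete covariants, $f$ corresponds (after inverting the discriminant $\Delta$, or at worst the discrete invariant $D$, in the coordinate ring) to an $\SL(V)\times\SL(W)$-equivariant \emph{rational} map $F:\wedge^2 V \otimes W \ratto Y$; the content of the theorem is that the poles of $F$ are supported on $\{\Delta=0\}$, so that $\Delta^k F$ is regular for $k\gg 0$. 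Concretely, I would let $R = K[\wedge^2 V \otimes W]$ and consider the module of covariants valued in $Y$; the restriction-to-Hesse map embeds this module into the corresponding module of discrete covariants, and I want to identify the image with those discrete covariants whose weights are integral — localised at $\Delta$.

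The key step is a Hartogs-type / codimension argument. Since $f$ has integer weights, we get a well-defined rational covariant $F$, and its locus of indeterminacy is a closed $\GL(V)\times\GL(W)$-invariant subset $Z \subset \wedge^2 V \otimes W$. I would argue that $Z$ has codimension at least one in the obvious way, and then that the pole divisor of each coordinate of $F$ is an invariant hypersurface, hence (being invariant) its equation is a product of invariants. By Theorem~\ref{thm:inv} the invariants form the polynomial ring $K[c_4,c_6]$, whose irreducible homogeneous elements are classified — up to scalars they are $c_4$, $c_6$, $\Delta$, and the forms $c_4^3 - j\Delta$ — and by Lemma~\ref{lem:codim1} each of these is the equation of the orbit closure of some $\phi$ with $C_\phi$ an elliptic normal quintic or rational nodal quintic. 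So it suffices to show $F$ has no pole along any such orbit closure \emph{other than} $\{\Delta=0\}$. Fix such an invariant $I \ne \Delta$ (up to scalar), so $I$ cuts out the closure of the orbit of some genus one model $\phi_0$ with $C_{\phi_0}$ an elliptic normal quintic of the relevant $j$-invariant, i.e. $\Delta(\phi_0)\ne 0$. By Lemma~\ref{lem:orbits} the $\GL(V)\times\GL(W)$-orbit of $\phi_0$ is dense in $\{I=0\}$, so it is enough to check that $F$ is regular at the single point $\phi_0$. But $\phi_0$ can be taken in the Hesse family: a generic Hesse model $u(a,b)$ has $\Delta(u(a,b)) = D(a,b)^5 \cdot(\text{unit})$ nonzero, and by Lemma~\ref{lem:van} (applied with $I$) there is a Hesse point on $\{I = 0\}$ where $\Delta \ne 0$, equivalently $D \ne 0$. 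At such a point the discrete covariant $f$ — which by hypothesis is a genuine polynomial map $\Aff^2 \to Y^{H_5}$, not merely a rational one — is defined, and regularity of $f$ near a point of the Hesse family with $D\ne 0$ translates, via the explicit slice coordinates of Theorem~\ref{thm:dcov}, into regularity of $F$ at the corresponding $\phi_0$.

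Putting this together: the pole divisor of $F$ is an invariant hypersurface whose irreducible components are among $\{c_4=0\}$, $\{c_6=0\}$, $\{\Delta=0\}$, $\{c_4^3 - j\Delta = 0\}$, and we have just shown $F$ is regular along each of these except possibly $\{\Delta=0\}$; hence the pole divisor is supported on $\{\Delta=0\}$ and $\Delta^k F$ is a regular covariant for $k$ equal to the order of pole of $F$ along $\{\Delta=0\}$. I expect the main obstacle to be the bookkeeping in the last translation step — making precise that ``$f$ is a polynomial map with $D(a,b)\ne 0$ at some point of $\{I=0\}$'' implies ``$F$ is regular at the corresponding model $\phi_0$''. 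This requires a clean statement of how the restriction-to-Hesse correspondence of Theorem~\ref{thm:dcov} behaves locally: one needs that the Hesse family $u$ is, in a neighbourhood of a model with $D\ne 0$, a slice transverse to the $\GL(V)\times\GL(W)$-orbits (so that regularity of $F$ on $\wedge^2 V\otimes W$ near $\phi_0$ is equivalent to regularity of $f = F\circ u$ near the corresponding $(a,b)$), together with the fact that $\Delta$ restricts to a nonzero multiple of $D^5$ on the Hesse family so that the relevant localisations match up. A secondary point to be careful about is that the module of discrete covariants is finitely generated over $K[c_4,c_6,D^{\pm 1}]$, or some such, so that a single power $k$ works uniformly — but this follows from Noetherianity once the support of the poles is pinned down.
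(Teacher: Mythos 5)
The heart of this theorem is the \emph{existence} of an $\SL(V)\times\SL(W)$-equivariant rational map $F:\wedge^2 V\otimes W \ratto Y$ restricting to $f$ on the Hesse family, and your proposal assumes it rather than proves it. Theorem~\ref{thm:dcov} only gives the other direction: a covariant restricts to a discrete covariant, and is determined by its restriction (uniqueness/injectivity). It says nothing about which discrete covariants extend, even after inverting $\Delta$ or $D$; indeed not all of them do, and your sentence ``Since $f$ has integer weights, we get a well-defined rational covariant $F$'' is exactly the unproved crux. The paper's proof spends most of its effort here: one needs that the stabiliser in $\SL(V)\times\SL(W)$ of a non-singular Hesse model is the Heisenberg group $H$ and that its normaliser is $N$ with $N/H\isom\mu_5\times\SL_2(\Z/5\Z)$ (Lemma~\ref{handn}), and then a descent statement (Proposition~\ref{prop1}) producing $F$ from an $N/H$-equivariant $f$ via the quotient $(G\times A)/N$ and birationality of $(g,a)\mapsto g(a)$, with the derivative computation of Lemma~\ref{lem:der} supplying separability in positive characteristic. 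The integer-weight hypothesis enters precisely as $\mu_5$-equivariance, which is what makes the attempted definition $F(g\cdot u(a,b)):=\rho_Y(g)f(a,b)$ well defined when two group elements carry Hesse points to the same model; your proposal never identifies the normaliser or explains this, so the well-definedness of $F$ is a genuine gap, not bookkeeping.

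Your second half — poles of $F$ lie on invariant hypersurfaces, the irreducible homogeneous invariants are (up to scalar) $c_4$, $c_6$, $\Delta$, $c_4^3-j\Delta$, and each one other than $\Delta$ is the orbit closure of a non-singular model which can be taken to be a Hesse model with $D\neq 0$ where $f$ is regular — is essentially the paper's denominator argument and is sound in outline. One refinement: the final translation step you flag (``regular restriction to the slice implies $F$ regular at $\phi_0$'') is not a pointwise statement one can make naively (restriction of a non-regular rational function to a subvariety can be regular); the clean way, as in the paper, is to write $F=R/S$ with $S$ a minimal-degree invariant denominator, note $R=S\cdot f$ on the Hesse family so $I\mid S$ forces $R(\phi_0)=0$, hence by equivariance $I$ divides every coordinate of $R$ as well, contradicting minimality. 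With that fix the second half goes through; the missing normaliser/extension argument in the first half is what must be supplied.
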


In Section~\ref{sec:denom} we give a practical method for computing
the least such $k$.

\begin{rem}
  If $Y$ is homogeneous of degree $(r,s)$ and $Y^{H_5} \not= 0$ then
  the action of the centre of $H_5$ shows that $2r+s \equiv 0
  \pmod{5}$. We see by~(\ref{wtformula}) that $p$ is an integer if and
  only if $q$ is an integer.  So the integer weight condition is just
  a congruence mod~$5$ on the degree of a covariant. Since $\Delta =
  D^5$ and $\deg D =12$ is coprime to $5$, an equivalent formulation
  of Theorem~\ref{mainthm} is that if $f:\Aff^2 \to Y^{H_5}$ is a
  homogeneous discrete covariant then $D^m f$ is a covariant for some
  $m \ge 0$.
\end{rem}

\section{Fractional covariants}
\label{sec:frac}

In this section we prove Theorem~\ref{mainthm}.

\begin{lem}
\label{handn}
Let $\phi \in \wedge^2 V \otimes W$ be a non-singular Hesse model.
\begin{enumerate}
\item The stabiliser of $\phi$ in $\SL(V) \times \SL(W)$ is
\[ H =\{  (\theta_V (h), \theta_W(h)) \, : \, h \in H_5 \}. \] 
\item The normaliser of $H$ in $\SL(V) \times \SL(W)$ is
\[ N =\{ (\theta_V (h), \zeta \theta_W(h)) 
\, : \, (\zeta,h)  \in \mu_5 \times H_5^+  \}. \]
\end{enumerate}
\end{lem}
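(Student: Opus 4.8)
The plan is to prove both parts by exploiting Lemma \ref{lem:orbits} (transitivity of $\GL(V) \times \GL(W)$ on non-singular models with fixed $j$-invariant) together with Theorem \ref{thm:exheis}(ii), which identifies normalisers of Heisenberg images. First I would observe that since $\phi$ is a non-singular Hesse model, it is $H_5$-invariant by construction of $u$ in \eqref{uab} and \eqref{thetavw}, so the claimed $H$ certainly lies in the stabiliser. For the reverse inclusion in (i): suppose $(M,N) \in \SL(V) \times \SL(W)$ fixes $\phi$. Then $(M,N)$ normalises the stabiliser, and more to the point, $N$ conjugates the pair of Heisenberg actions on $W$ determined by $\phi$ — one shows that the stabiliser of $\phi$ must preserve the unique $H_5$-structure making $\phi$ invariant, so $M \in \theta_V(H_5) \cdot (\text{centraliser})$ and similarly for $N$. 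The key rigidity input is that the Schrödinger representation is irreducible, so its centraliser in $\GL_n$ consists only of scalars; combined with the $\SL$-determinant condition, the scalars are forced into $\mu_5$, and matching the $V$ and $W$ actions (they must be the ``same'' $h \in H_5$ up to the compatibility already built into $\theta_V, \theta_W$) pins down $(M,N) = (\theta_V(h), \theta_W(h))$. I expect the dimension count — $\dim(\SL(V) \times \SL(W)) - \dim(\text{orbit}) = 48 - 48 = 0$, so the stabiliser is finite — plus the explicit order $|H_5| = 125$ to serve as a useful consistency check rather than a proof ingredient.

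For part (ii), I would compute $N = N_{\SL(V)\times\SL(W)}(H)$ directly from part (i). An element $(M,N)$ normalising $H$ acts by conjugation on $H \cong H_5$, inducing an automorphism of $H_5$; the map $(M,N) \mapsto (\text{this automorphism})$ has image in $\Aut(H_5)$. Conjugation by $(M,N)$ sends $\theta_V(h)$ to $\theta_V(\alpha(h))$ for some $\alpha \in \Aut(H_5)$, so $M \theta_V(h) M^{-1} = \theta_V(\alpha(h))$ for all $h$; this says $M$ conjugates the Schrödinger representation $\theta_V$ to $\theta_V \circ \alpha$. By Theorem \ref{thm:exheis}(ii) the only automorphisms realisable by conjugation inside $\SL(V)$ are those in the image of $s_\beta$, i.e.\ those coming from $\SL_2(\Z/5\Z)$, and the conjugating matrix is then $\theta_V^+(\gamma)$ for the corresponding $\gamma \in \SL_2(\Z/5\Z)$, uniquely up to the centre $\theta_V(\langle \zeta \rangle)$. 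So $M \in \theta_V^+(H_5^+)$; but since $\theta_V: H_5^+ \to \SL(V)$ already has image containing these, and the centre scalar on $V$ is $\zeta_5^{?} I_V$ with a determinant constraint, the scalar ambiguity on the $V$-side is absorbed. On the $W$-side the same reasoning gives $N \in \theta_W^+(H_5^+) \cdot \mu_5$, where the extra $\mu_5$ appears precisely because a scalar $\zeta I_W$ with $\zeta \in \mu_5$ has determinant $1$ and commutes with everything, hence lies in the normaliser — and this scalar genuinely cannot be removed since $Y$ need not see it. The automorphism $\alpha$ induced on $H_5$ must be the same whether read off from the $V$-action or the $W$-action (both are conjugation by $(M,N)$ on the single group $H$), which forces the $H_5^+$-components of $M$ and $N$ to agree, giving exactly $N = \{(\theta_V(h), \zeta\,\theta_W(h)) : (\zeta,h) \in \mu_5 \times H_5^+\}$.

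The main obstacle I anticipate is the bookkeeping around scalars and the $\SL$ (rather than $\GL$) condition: one must check carefully that on the $V$-side the relevant central scalars are forced to be trivial (or into $\mu_5 \cap \{\det = 1 \text{ after the } \SL \text{ twist}\}$) while on the $W$-side a full copy of $\mu_5$ survives, and that this asymmetry is exactly accounted for by how $\theta_V$ versus $\theta_W$ were normalised in \eqref{thetavw} to make $u(a,b)$ invariant — note $\sigma$ acts by $\zeta_5^{2i}$ on $v_i$ but $\zeta_5^i$ on $w_i$, and the model $u(a,b) \in \wedge^2 V \otimes W$ pairs these so that the central character contributions $\zeta_5^{2i+2j}$ from $\wedge^2 V$ and $\zeta_5^{k}$ from $W$ cancel on the invariant subspace; tracking which scalars act trivially on all of $\wedge^2 V \otimes W$ versus on $Y$ for a general homogeneous $Y$ is where the $\mu_5$ freedom comes from. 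A secondary point requiring care is uniqueness: that the automorphism of $H_5$ determines $(M,N)$ up to the stated group, which reduces to Schur's lemma for the irreducible $\theta_V$ and $\theta_W$ separately, applied on each side after quotienting by the $H_5^+$-part.
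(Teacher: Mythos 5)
Your part (ii) is essentially sound and close in spirit to the paper's: the paper also starts from Theorem~\ref{thm:exheis}(ii), composes an arbitrary normalising element with an element of $N$ to reduce to the form $(I_V,g_W)$, and then uses faithfulness of $\theta_V$ plus the fact that the centraliser of $\theta_W(H_5)$ in $\SL(W)$ is the scalars $\mu_5 I_W$; your version (matching the induced automorphisms of $H_5$ on the two sides and quotienting by the kernel of $H_5^+\to\Aut(H_5)$) is a correct reorganisation of the same ingredients.

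Part (i), however, has a genuine gap. The whole content of (i) is the reverse inclusion, and you dispose of it with the assertion that ``the stabiliser of $\phi$ must preserve the unique $H_5$-structure making $\phi$ invariant''; this is never justified, and even if it were, it would only place the stabiliser inside the normaliser $N$ of part (ii). You would then still have to show that no element of $N$ with nontrivial $\SL_2(\Z/5\Z)$-component fixes $\phi$ exactly. This is not automatic: such elements act on the Hesse family through $\chi_1$ (composed with $\mu_5$-scalings), and at special parameters --- in particular the non-singular Hesse models with $j=0$ or $1728$, where $C_\phi$ has extra automorphisms and $(a,b)$ is fixed up to scalar by some $\chi_1(\gamma)$ --- ruling out a fixed vector requires either an eigenvalue computation or a structural reason. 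The paper supplies exactly this missing input geometrically: an element of $\SL(V)\times\SL(W)$ fixing $\phi$ induces an automorphism of $C_\phi$ which, by the cited results of \cite{g1inv}, preserves the invariant differential, hence is a translation, necessarily by a $5$-torsion point since the embedding has degree $5$; composing with an element of $H$ reduces to the case of a pair of scalar matrices, which the determinant condition forces into the image of the centre of $H_5$. Your Schur-type rigidity and the $\mu_5$ bookkeeping are fine as far as they go, but without the invariant-differential step (or an explicit verification that no nontrivial element of $N/H$ fixes any non-singular $u(a,b)$) the argument for (i) does not close.
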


\begin{proof}
  (i) It is clear by (\ref{uab}) and (\ref{thetavw}) that $H$ is
  contained in the stabiliser of~$\phi$. Since any automorphism of
  $C_\phi$ of order $5$ is translation by a $5$-torsion point of its
  Jacobian, all such automorphisms are described by elements of $H$.

  Now let $g \in \SL(V) \times \SL(W)$ with $g (\phi) = \phi$ and let
  $\gamma$ be the automorphism of $C_\phi$ induced by $g$. By
  \cite[Proposition 5.19 and Lemma 2.4]{g1inv} $\gamma$ preserves the
  invariant differential 
  and is therefore a translation map. Since $C_\phi \subset \PP^4$ is
  a curve of degree~$5$ this translation is by a point of order~$5$.
  Composing $g$ with a suitable element of $H$ reduces us to the case
  $\gamma$ is the identity. Then $g = (g_V,g_W)$ is a pair of scalar
  matrices.  Since these matrices each have determinant~$1$ and
  jointly fix $\phi$ it follows that $(g_V,g_W)=
  (\theta_V(h),\theta_W(h))$ for some $h$ in the centre of $H_5$.

  (ii) We see by Theorem~\ref{thm:exheis}(ii) that $N$ is contained in
  the normaliser of $H$, and that any element of the normaliser may be
  composed with an element of $N$ to give an element of the form $g =
  (I_V, g_W)$ where $I_V$ is the identity.  Since $\theta_V$ is
  faithful it follows that $g_W$ is in the centraliser of
  $\theta_W(H_5)$ in $\SL(W)$, which turns out to consist only of
  scalar matrices.
\end{proof}

The following proposition will be used to explain the relationship
between the covariants and the discrete covariants.

\begin{prop}
\label{prop1}
Let $G$ be a linear algebraic group acting on irreducible affine
varieties $X$ and $Y$. Let $H \subset G$ be a subgroup whose
normaliser $N \subset G$ is of finite order coprime to $\charic K$.
Suppose that $A \subset X^H$ is an irreducible variety acted on by
$N/H$, and $U \subset A$ is a dense open subset such that
{\renewcommand{\theenumi}{\roman{enumi}}
\begin{enumerate}
\item the morphism $G \times U \to X; \, (g,\phi) \mapsto g(\phi)$ has
  dense image,
\item the stabiliser in $G$ of each element of $U$ is $H$,
\item either $\charic K =0$ or the derivative of the map in (i) is an
  isomorphism at all points of $G \times U$.
\end{enumerate}}
\noindent
Then by restriction to $A$ there is a bijection between
\begin{itemize}
\item $G$-equivariant rational maps $F : X \ratto Y$, and
\item $N/H$-equivariant rational maps $f : A \ratto Y^H$
\end{itemize}
\end{prop}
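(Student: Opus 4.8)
The plan is to construct the bijection in both directions and check they are mutually inverse. Given a $G$-equivariant rational map $F:X\ratto Y$, restrict it to $A$: since $A\subset X^H$, for $\phi\in A$ and $n\in N$ we have $n(\phi)\in A$ and $F(n(\phi))=n(F(\phi))$, and because $H$ fixes $\phi$ the value $F(\phi)$ lies in $Y^H$; thus $f:=F|_A$ is an $N/H$-equivariant rational map $A\ratto Y^H$. For this to make sense one must know $F$ is defined on a dense subset of $A$, which follows from hypothesis~(i): the indeterminacy locus of $F$ is a proper $G$-invariant closed subset of $X$, its preimage in $G\times U$ is proper closed, hence meets $\{1\}\times U$ in a proper closed subset, so $F$ is defined on a dense open subset of $U\subset A$.

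Conversely, given an $N/H$-equivariant rational map $f:A\ratto Y^H$, I would define $F$ on the dense image of $G\times U$ by $F(g(\phi)) := g(f(\phi))$ for $\phi\in U$. The crucial point is well-definedness: if $g_1(\phi_1)=g_2(\phi_2)$ with $\phi_i\in U$, then $g_2^{-1}g_1$ carries $\phi_1$ to $\phi_2$; one shows this forces $g_2^{-1}g_1\in N$, using hypothesis~(ii) (the stabilisers are exactly $H$) together with the fact that $N$ is the normaliser of $H$ — indeed if $g\phi_1=\phi_2$ then $g H g^{-1}=\mathrm{Stab}(\phi_2)=H$ so $g\in N$. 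Writing $g_2^{-1}g_1=n\in N$ we get $\phi_2=n(\phi_1)$, and then $g_1(f(\phi_1))=g_2 n (f(\phi_1))=g_2(f(n\phi_1))=g_2(f(\phi_2))$ by the $N/H$-equivariance of $f$ (note $H$ acts trivially on $Y^H$, so only the class of $n$ in $N/H$ matters). This makes $F$ a well-defined rational map on a dense open subset of $X$, and $G$-equivariance is immediate from the construction on a dense set. The two constructions are visibly inverse: restricting the extended $F$ back to $A$ recovers $f$, and extending the restriction of a $G$-equivariant $F$ recovers $F$ on a dense set, hence everywhere.

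The main obstacle is ensuring that $F$, built as a map of sets on the dense image of $G\times U$, is actually a \emph{rational} (i.e. algebraic) map and not merely a set-theoretic one. This is where hypotheses~(iii) and the finiteness/coprimality of $N$ enter. The clean way is to observe that $G\times U \to X$ realises $X$ birationally as the quotient of $G\times U$ by the free action of $N/H$ given by $(g,\phi)\mapsto (gn^{-1},n\phi)$; the assumption that $N/H$ is finite of order coprime to $\charic K$, combined with (iii) (étaleness of the orbit map in positive characteristic, automatic in characteristic $0$), guarantees that $(G\times U)/(N/H)$ is a geometric quotient and that the induced map to $X$ is birational, so that $G$-equivariant rational maps out of $X$ correspond to $G$-equivariant, $N/H$-invariant rational maps out of $G\times U$, which in turn (by $G$-equivariance, restricting to $\{1\}\times U$) correspond to $N/H$-equivariant rational maps $U\ratto Y^H$, i.e. to rational maps $A\ratto Y^H$ of the required type. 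I would phrase the argument in exactly this language: identify the correspondence with pullback along the birational map $(G\times U)/(N/H)\ratto X$ and then with restriction along $U\hookrightarrow G\times U$, which makes both the bijectivity and the algebraicity transparent and confines the delicate point to the standard fact that a free action of a finite group of order prime to the characteristic admits a geometric quotient.
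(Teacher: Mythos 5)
Your proposal is correct and takes essentially the same route as the paper: restriction gives one direction, and for the converse you define $F(g(\phi))=g(f(\phi))$ and make it algebraic by showing the action map realises $X$ birationally as a finite quotient of $G\times U$ (the paper works with $(G\times A)/N$ and the maps $\psi_{\id},\psi_f$), with (i) supplying dominance, (ii) together with the normaliser property supplying injectivity on the quotient, and (iii) supplying separability in positive characteristic. The only slip is cosmetic: the action $(g,\phi)\mapsto(g n^{-1},n\phi)$ on $G\times U$ is a free action of $N$ rather than of $N/H$ (elements of $H$ move the first factor), so one quotients by $N$, exactly as the paper does; since $H$ acts trivially on $A$ and on $Y^H$, the resulting correspondence is still with $N/H$-equivariant maps.
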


\begin{proof}
  Let $F : X \ratto Y$ be a $G$-equivariant rational map. Its domain
  of definition is a $G$-invariant open subset of $X$ and hence by (i)
  it meets $U$. Therefore $F$ restricts to a rational map $f$ on $A$.
  By hypothesis $A$ is acted on by $N$ and pointwise fixed by $H$.
  Since $F$ is $N$-equivariant it follows that $f(A) \subset Y^H$ and
  $f$ is $N/H$-equivariant.

  Conversely suppose $f : A \ratto Y^H$ is an $N/H$-equivariant
  rational map. We let $\delta \in N$ act on $G \times A$ via $(g,a)
  \mapsto (g\delta^{-1}, \delta a)$. Since $N$ is a finite group of
  order coprime to $\charic K$ and $G \times A$ is an affine variety,
  the quotient $(G \times A)/N$ exists, and is an affine variety.  We
  consider the maps
\begin{align*}
\psi_{\id} &:  (G \times A)/N \ra X; \hspace{-5em} 
& (g,a) & \mapsto g(a) \\
\psi_{f} &:  (G \times A)/N \ratto Y; \hspace{-5em}
 & (g,a) & \mapsto g(f(a)).
\end{align*}
Shrinking $U$ if necessary, we may assume that $N/H$ acts on $U$.  By
(i) $\psi_{\id}$ has dense image, by (ii) it is injective on the dense
subset $(G \times U)/N$, and by (iii) it is separable.  It
follows 
that $\psi_{\id}$ is birational. Then $F = \psi_f \circ
\psi_{\id}^{-1}$ is a $G$-equivariant rational map extending $f$.
\end{proof}

\begin{ProofOf}{Theorem~\ref{mainthm}}
  We apply Proposition~\ref{prop1} with $G = \SL(V) \times \SL(W)$, $X
  = \wedge^2 V \otimes W$ and $H \subset N \subset G$ as in
  Lemma~\ref{handn}.  We also let $A = X^H$ be the space of Hesse
  models and $U \subset A$ the space of non-singular Hesse models.

  We check the hypotheses (i), (ii) and (iii).  By \cite[Proposition
  4.1]{g1hess} every non-singular model is equivalent to a Hesse model
  and by Theorem~\ref{thm:inv} the non-singular models are Zariski
  dense in $\wedge^2 V \otimes W$. This proves (i). We checked (ii) in
  Lemma~\ref{handn} and (iii) is checked in Lemma~\ref{lem:der} below.

  By Lemma~\ref{handn} and the definition of $H_5^+$ we have $N/H
  \isom \mu_5 \times \Gamma$ where $\Gamma = \SL_2(\Z/5\Z)$.  Now $f$
  is $\Gamma$-equivariant by definition of a discrete covariant and
  $\mu_5$-equivariant by the assumption it has integer weights.  So by
  Proposition~\ref{prop1} it is the restriction of a $G$-equivariant
  rational map $F : \wedge^2 V \otimes W \ratto Y$.  (We say $F$ is a
  {\em fractional covariant}.)

  It remains to show that $\Delta^k F$ is regular for some $k \ge 0$.
  Let $S \in K[\wedge^2 V \otimes W]$ be a homogeneous polynomial of
  least degree such that $S F$ is regular. Then $F = R/S$ where $R$ is
  a covariant and $S$ is an invariant. Suppose $S(\phi)=0$ for some
  non-singular model $\phi$.  By \cite[Proposition 4.1]{g1hess} we may
  suppose that $\phi$ is a Hesse model, and so by the regularity of
  $f$ we have $R(\phi) = S(\phi)=0$. By Lemma~\ref{lem:codim1} the
  Zariski closure of the $\GL(V) \times \GL(W)$-orbit of $\phi$ is the
  zero locus of a homogeneous invariant $I$. Now both $R$ and $S$ are
  divisible by $I$ and this contradicts the choice of $S$. Therefore
  $F$ is regular on all non-singular models.  By
  Theorem~\ref{thm:inv}(i) and the Nullstellensatz it follows that
  $\Delta^k F$ is regular for some $k \ge 0$.
\end{ProofOf}

The following lemma completes the proof of Theorem~\ref{mainthm} in
the case of positive characteristic (still assuming $\charic K
\not=2,3,5$).

\begin{lem}
\label{lem:der}
The derivative of the morphism  
\[ \begin{array}{rl}
\SL(V) \times \SL(W) \times \Aff^2 & \to \wedge^2 V \otimes W \\
(g_V,g_W,(a,b)) & \mapsto (g_V,g_W) u(a,b) 
\end{array} \] 
is an isomorphism at all $(g_V,g_W,(a,b))$ with $D(a,b) \not=0$.
\end{lem}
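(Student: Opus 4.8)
The plan is to compute the derivative by a dimension count combined with an injectivity (or surjectivity) argument at a single well-chosen point, then spread out. The source and target both have dimension $24 + 24 + 2 = 50$, so it suffices to show the derivative is injective — equivalently that its kernel is trivial — at each point with $D(a,b)\neq 0$. By $\SL(V)\times\SL(W)$-equivariance of the whole picture (left translation on the group factor intertwines the maps), it is enough to check this at points of the form $(I_V, I_W, (a,b))$ with $D(a,b)\neq 0$, i.e. at a non-singular Hesse model $\phi = u(a,b)$.

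At such a point the derivative sends $(\xi_V, \xi_W, (\dot a,\dot b)) \in \liesl(V)\times\liesl(W)\times K^2$ to $\xi_V\cdot\phi + \xi_W\cdot\phi + u'(a,b)(\dot a,\dot b)$, where $\xi_V\cdot\phi$ denotes the infinitesimal action $\xi_V\phi + \phi\xi_V^T$ on $\wedge^2 V$ and $\xi_W\cdot\phi$ the infinitesimal action on the $W$-factor. So I must show that the only way to have $\xi_V\cdot\phi + \xi_W\cdot\phi$ lie in the image of $u'(a,b)$, and in fact equal the negative of a Hesse-direction vector, is the trivial one. First I would observe that the tangent space to the orbit $\SL(V)\times\SL(W)\cdot\phi$ at $\phi$ is exactly $\{\xi_V\cdot\phi + \xi_W\cdot\phi\}$, and that the kernel of $(\xi_V,\xi_W)\mapsto \xi_V\cdot\phi+\xi_W\cdot\phi$ is the Lie algebra of the stabiliser of $\phi$. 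By Lemma~\ref{handn}(i) that stabiliser is $H = \{(\theta_V(h),\theta_W(h)):h\in H_5\}$, a finite group, so its Lie algebra is zero (we are in characteristic $\neq 5$, and indeed $\neq 2,3$); hence $(\xi_V,\xi_W)\mapsto\xi_V\cdot\phi+\xi_W\cdot\phi$ is injective and the orbit map $\SL(V)\times\SL(W)\to\wedge^2 V\otimes W$, $g\mapsto g(\phi)$, is immersive at the identity.

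It then remains to show the orbit tangent space meets the image of $u'(a,b)$ only in $0$ — i.e. that the orbit direction and the Hesse-family direction are transverse at $\phi$. Here I would argue geometrically: the orbit closure consists of models $\psi$ whose curve $C_\psi$ has the same $j$-invariant as $C_\phi$ (Lemma~\ref{lem:orbits}), so moving along the orbit keeps $j$ fixed to first order, whereas the Hesse family is a genuine one-parameter-up-to-scaling family of $j$-invariants: the map $(a,b)\mapsto j(C_{u(a,b)})$ has nonzero derivative away from $D=0$ (this can be read off from the formulae (\ref{discinv}), since $j = c_4^3/\Delta = c_4^3/D^5$ is a non-constant function of the ratio $a:b$ whose differential vanishes only where $D$ or its relevant partials vanish). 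More carefully, $u'(a,b)(\dot a,\dot b)$ differs from the scaling direction $u(a,b)$ itself (which does lie in the orbit tangent space, coming from scalar $\xi_V$) by a direction along which $j$ changes; so any overlap of $\mathrm{Im}\,u'(a,b)$ with the orbit tangent space would force a Hesse direction along which $j$ is infinitesimally constant, contradicting $D(a,b)\neq 0$. Combining the two injectivity statements: if $\xi_V\cdot\phi+\xi_W\cdot\phi = -u'(a,b)(\dot a,\dot b)$, transversality forces both sides to be $0$, then $(\dot a,\dot b)=0$ since $u$ is a linear embedding, and $(\xi_V,\xi_W)=0$ by the stabiliser computation. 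This gives injectivity, hence by equal dimensions an isomorphism, at every point with $D\neq 0$.

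The main obstacle I expect is making the transversality step fully rigorous rather than heuristic: one needs to know precisely which directions in $\wedge^2 V\otimes W$ at $\phi$ are "$j$-increasing", and that $\mathrm{Im}\,u'(a,b)$ contains such a direction not already accounted for by scaling. The cleanest route is probably to pull everything back to the invariants: the composite $c_4^3/\Delta$ (a rational function, regular near $\phi$ since $\Delta(\phi)\neq 0$) is constant on orbits, so its differential annihilates the orbit tangent space; meanwhile its restriction to the Hesse family is $c_4(a,b)^3/D(a,b)^5$, whose differential at $(a,b)$ is nonzero precisely when $D(a,b)\neq0$ — one checks this from (\ref{discinv}) using Euler's relation and the fact that $c_4^3$ and $D^5$ are not proportional. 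That forces $\mathrm{Im}\,u'(a,b)$ to escape the kernel of $d(c_4^3/\Delta)$, hence to meet the orbit tangent space trivially. A secondary technical point is bookkeeping the two infinitesimal actions and confirming $\dim\liesl(V)\oplus\liesl(W)\oplus K^2 = 50 = \dim(\wedge^2 V\otimes W)$ so that injective implies bijective; this is immediate.
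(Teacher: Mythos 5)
There are two genuine gaps here, and they matter precisely in the setting this lemma exists to handle. First, your step ``the stabiliser is finite, so the kernel of $(\xi_V,\xi_W)\mapsto\xi_V\cdot\phi+\xi_W\cdot\phi$ is zero'' is only automatic in characteristic zero. That kernel is the Lie algebra of the \emph{scheme-theoretic} stabiliser, while Lemma~\ref{handn}(i) only identifies the group of $K$-points; in characteristic $p>0$ the stabiliser scheme could a priori be non-reduced, and ruling that out is exactly the infinitesimal statement Lemma~\ref{lem:der} is supposed to supply. Since the paper invokes this lemma only to verify hypothesis (iii) of Proposition~\ref{prop1} in positive characteristic (in characteristic zero that hypothesis is vacuous), deducing the infinitesimal triviality from finiteness of the stabiliser group begs the question in the one case that is actually needed.

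Second, the transversality step via $j=c_4^3/\Delta$ fails at some of the points covered by the lemma. On the Hesse plane one has $dj = c_4^2\,(3D\,dc_4-5c_4\,dD)/D^6$ and $j-1728=c_6^2/D^5$, so $d\bigl(j|_{\mathrm{Hesse}}\bigr)$ vanishes identically at the parameters with $c_4(a,b)=0$ or $c_6(a,b)=0$; these have $D(a,b)\neq 0$, so your claim that the differential is nonzero precisely when $D\neq0$ is false, and the argument gives no information on the fibres over $j=0,1728$. Moreover, even where $dj\neq0$, its kernel in the Hesse plane is the radial line through $\phi$ (Euler, since $j$ is homogeneous of degree $0$), so you must still show $\phi$ itself does not lie in the $\liesl(V)\times\liesl(W)$-orbit tangent space. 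Your parenthetical that the scaling direction lies in that tangent space ``coming from scalar $\xi_V$'' is wrong -- nonzero scalar matrices are not traceless when $\charic K\neq5$ -- and if it were true it would contradict the lemma, since the image of the derivative would then have dimension at most $49$. (One can exclude the radial direction by noting $d\Delta$ kills the orbit tangent space while $d\Delta(\phi)\,\phi=60\,\Delta(\phi)\neq0$.) A repair in your spirit would use both $dc_4$ and $dc_6$, which annihilate the orbit directions, and check that their Jacobian restricted to the Hesse plane is a nonzero multiple of $D^4$ -- but that is again an explicit computation and still leaves the characteristic-$p$ stabiliser issue. The paper sidesteps both problems at once by computing, in explicit bases, the determinant of the full $50\times50$ derivative and finding it to be $5^4D(a,b)^4$.
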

\begin{proof}
It suffices to compute the derivative at $(I_V,I_W,(a,b))$.  This is
a linear map
\begin{equation}
\label{derivative}
\liesl(V) \times \liesl(W) \times \Aff^2 \to \wedge^2 V \otimes W. 
\end{equation}
We write $E_{ij}$ for the $n \times n$ matrix with $(i,j)$ entry 1 and
all other entries 0. Then $\liesl_n$ has basis $ \{ E_{ij} : i \not= j
\} \cup \{ E_{00}-E_{ii} : i \not = 0 \}.$ Taking these bases for
$\liesl(V)$ and $\liesl(W)$, the standard basis for $\Aff^2$ and the
basis $\{(v_i \wedge v_j) w_k : i <j \}$ for $\wedge^2 V \otimes W$,
we found by direct calculation that the derivative~(\ref{derivative})
has determinant $5^4 D(a,b)^4$.
\end{proof}

\section{Denominators}
\label{sec:denom}

In this section we show how to find the least value of $k$ in
Theorem~\ref{mainthm}.  We consider the family of genus one models
\begin{equation}
\label{u1def}
\begin{array}{rl}
u_1 : \Aff^5 & \to \wedge^2 V \otimes W \\ 
(\la_0, \ldots , \la_4) & \mapsto 
  \textstyle\sum \lambda_0 (v_{1} \wedge v_{4}) w_0 +
\sum (v_{2} \wedge v_{3}) w_0.
\end{array} 
\end{equation}
where the sums are taken over all cyclic permutations of the
subscripts mod $5$. These models are related to the Hesse family by
\begin{equation}
\label{x1x}
u_1(a,\ldots,a) = u(a,1).
\end{equation}

\begin{rem} 
  If $\phi=u_1(\la_0, \ldots , \la_4)$ then $C_\phi \subset \PP^4$ is
  defined by $\lambda_0 x_0^2 + x_1 x_4 - \lambda_2 \lambda_3 x_2 x_3
  = 0$ and its cyclic permutes. These curves were studied in
  \cite{jems} where it is shown that $\phi = u_1(\la,1,\ldots,1)$
  defines the universal family of (generalised) elliptic curves
  parametrised by $X_1(5)$. Here $\la$ is a co-ordinate on $X_1(5)
  \isom \PP^1$.
\end{rem}

\begin{defn}
Let $D \subset \SL(V) \times \SL(W)$ be the subgroup of pairs
of diagonal matrices 
\begin{equation}
\label{diagvw}
\left( \begin{array}{ccccc}\alpha_1 \alpha_4 \!\! \\ 
&       \!\! \alpha_0 \alpha_2 \!\! \\ 
& &     \!\! \alpha_1 \alpha_3 \!\! \\
& & &   \!\! \alpha_2 \alpha_4 \!\! \\ 
& & & & \!\! \alpha_0 \alpha_3 
\end{array} \right), \quad 
\left( \begin{array}{ccccc} \alpha_0 \, \\  
&       \, \alpha_1 \, \\
& &     \, \alpha_2 \, \\
& & &   \, \alpha_3 \, \\
& & & & \, \alpha_4 
\end{array} \right),
\end{equation} 
with $\prod \alpha_i = 1$.
\end{defn}

\begin{lem} 
\label{lem:D}
The action of $D$ on $\Aff^5$ compatible with $u_1$ is
\[ (\la_0, \la_1, \la_2, \la_3,\la_4) \mapsto 
(\textstyle\frac{\alpha_0^2}{\alpha_1 \alpha_4} \la_0, 
\frac{\alpha_1^2}{\alpha_0 \alpha_2} \la_1,
\frac{\alpha_2^2}{\alpha_1 \alpha_3} \la_2, 
\frac{\alpha_3^2}{\alpha_2 \alpha_4} \la_3,
\frac{\alpha_4^2}{\alpha_0 \alpha_3} \la_4). \]
In particular $D$ acts transitively on the subsets of $\Aff^5$ defined 
by the condition that $\la_0, \ldots, \la_4$ have a fixed non-zero product.
\end{lem}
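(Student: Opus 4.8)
The plan is to establish the displayed formula by a direct computation with the matrices~(\ref{diagvw}) and then deduce transitivity from the surjectivity of an explicit homomorphism of tori. For the formula, write $g = (g_V, g_W) \in D$ with parameters $\alpha_0, \ldots, \alpha_4$ satisfying $\prod_j \alpha_j = 1$. Matching the $i$-th diagonal entry of the first matrix in~(\ref{diagvw}) with $\alpha_{i-1}\alpha_{i+1}$, we have $g_V : v_i \mapsto \alpha_{i-1}\alpha_{i+1}\, v_i$ and $g_W : w_i \mapsto \alpha_i w_i$ (indices mod $5$). I would then apply $g$ to $u_1(\la_0, \ldots, \la_4)$ monomial by monomial. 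The monomial $(v_{i+2}\wedge v_{i+3}) w_i$ appearing in~(\ref{u1def}) is scaled by $\alpha_i\cdot\alpha_{i+1}\alpha_{i+3}\cdot\alpha_{i+2}\alpha_{i+4} = \prod_j \alpha_j = 1$, so these monomials are fixed; the monomial $\la_i (v_{i+1}\wedge v_{i+4}) w_i$ is scaled by $\alpha_i \cdot \alpha_i\alpha_{i+2}\cdot\alpha_{i+3}\alpha_i = \alpha_i^3\alpha_{i+2}\alpha_{i+3}$, and $\prod_j\alpha_j = 1$ (applied to the five consecutive indices $i-1,\ldots,i+3$) rewrites $\alpha_{i+2}\alpha_{i+3} = (\alpha_{i-1}\alpha_i\alpha_{i+1})^{-1}$, so this scalar is $\alpha_i^2/(\alpha_{i-1}\alpha_{i+1})$. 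Hence $g \cdot u_1(\la) = u_1(\la')$ with $\la'_i = (\alpha_i^2/(\alpha_{i-1}\alpha_{i+1}))\,\la_i$. Since $u_1$ is injective (the coefficient of $(v_{1}\wedge v_{4})w_0$ recovers $\la_0$, and likewise for the cyclic permutes), this $\la'$ is the unique preimage, which identifies the action on $\Aff^5$ with the asserted map.

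For transitivity, note first that $\prod_i\la'_i = \prod_i\la_i$, because $\prod_i\bigl(\alpha_i^2/(\alpha_{i-1}\alpha_{i+1})\bigr) = (\prod_i\alpha_i)^2/((\prod_i\alpha_{i-1})(\prod_i\alpha_{i+1})) = 1$; so every $D$-orbit lies in a single fibre of $\la\mapsto\prod_i\la_i$. Conversely, fix $\la,\mu\in\Aff^5$ with $\prod_i\la_i = \prod_i\mu_i \ne 0$; then all coordinates are nonzero and $t_i := \mu_i/\la_i$ satisfies $\prod_i t_i = 1$, so it suffices to produce $\alpha_0,\ldots,\alpha_4\in K^\times$ with $\prod_i\alpha_i = 1$ and $\alpha_i^2/(\alpha_{i-1}\alpha_{i+1}) = t_i$ for all $i$. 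Equivalently, writing $T = \{\alpha\in(K^\times)^5 : \prod_i\alpha_i = 1\}$ and $T' = \{t\in(K^\times)^5 : \prod_i t_i = 1\}$ for the two $4$-dimensional tori, I would show the homomorphism $T\to T'$, $\alpha\mapsto(\alpha_i^2/(\alpha_{i-1}\alpha_{i+1}))_i$, is surjective. Its kernel consists of those $\alpha$ for which the ratio $\alpha_i/\alpha_{i-1}$ is independent of $i$, i.e.\ $\alpha_i = \alpha_0 r^i$ with $r^5 = \alpha_0^5 = 1$, which is finite; hence the image is a closed connected subgroup of dimension $4$ inside the connected torus $T'$, so it equals $T'$. (Concretely one can also write down a preimage: choose $\beta_0$ with $\beta_0^5 = \prod_i\prod_{j<i}t_j$, set $\beta_i = \beta_0/\prod_{j<i}t_j$ so that $\prod_i\beta_i = 1$ and $\beta_i/\beta_{i+1} = t_i$, put $\alpha_i = \beta_1\cdots\beta_i$, and finally rescale $\alpha$ by a suitable scalar to force $\prod_i\alpha_i = 1$.) Either argument uses that $K$ is algebraically closed, via the existence of fifth roots and the connectedness of tori; we already have $\charic K\ne 5$.

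The only genuine work is in the first step, and there it is the bookkeeping with the cyclic indices: the key points are that the two families of monomials in~(\ref{u1def}) transform in completely different ways, with the $(v_{i+2}\wedge v_{i+3})w_i$ monomials invariant, and that the collapse of $\alpha_i^3\alpha_{i+2}\alpha_{i+3}$ to $\alpha_i^2/(\alpha_{i-1}\alpha_{i+1})$ is forced by the determinant-one condition. The transitivity statement is then essentially formal --- surjectivity of a map between $4$-dimensional tori --- which also explains why the product of the $\la_i$ is the only invariant of a $D$-orbit among tuples with nonzero product, whereas for product zero the pattern of vanishing coordinates is preserved and one gets several orbits.
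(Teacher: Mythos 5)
Your proposal is correct and follows essentially the same route as the paper: a direct computation of how the diagonal pair~(\ref{diagvw}) scales the two families of monomials in~(\ref{u1def}), with the constraint $\prod_i \alpha_i = 1$ collapsing the scalar on the $\la_i$-terms to $\alpha_i^2/(\alpha_{i-1}\alpha_{i+1})$. The only cosmetic difference is in the transitivity step, where the paper exhibits the explicit one-parameter subgroups $(\beta^{-2},\beta^{-1},1,\beta,\beta^2)$ while you argue via surjectivity of the torus homomorphism (or your explicit solution); both come down to extracting fifth roots over the algebraically closed field, so this is the same argument in substance.
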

\begin{proof}
  Let $g_V$ and $g_W$ be the matrices~(\ref{diagvw}) with $(\alpha_0,
  \ldots, \alpha_4) = (\alpha,1, \ldots,1)$. Then
\[ (g_V,g_W) \, u_1(\la_0,\ldots,\la_4) = \alpha \, 
u_1(\alpha^2 \la_0,\alpha^{-1} \la_1, \la_2,\la_3,\alpha^{-1}\la_4). \]
From this calculation and the obvious cyclic symmetry it follows
that the action of $D$ on $\Aff^5$ is as stated. In the special
case $(\alpha_0, \ldots,\alpha_4) = (\beta^{-2},\beta^{-1},1,\beta,\beta^2)$ 
this action is given by 
$(\la_0, \la_1, \la_2, \la_3,\la_4) \mapsto 
(\beta^{-5} \la_0, \la_1, \la_2,  \la_3, \beta^5 \la_4)$.
Since we are working over an algebraically closed field 
the final statement is clear.
\end{proof}

Let $Y$ be a homogeneous rational representation of $\GL(V) \times \GL(W)$.

\begin{thm}
\label{f1thm}
Let 
$f : \Aff^2 \to Y^{H_5}$ be an integer weight discrete covariant.
\begin{enumerate}
\item There is a unique $D$-equivariant rational map $f_1 : \Aff^5
  \ratto Y$ with \[f_1(a,\ldots,a) = f(a,1).\]
\item $f$ is a covariant if and only if $f_1$ is regular.
\end{enumerate}
\end{thm}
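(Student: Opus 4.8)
The plan is to deduce both parts from Theorem~\ref{mainthm} together with the transitivity statement of Lemma~\ref{lem:D}, using the relation~(\ref{x1x}) that ties the family $u_1$ to the Hesse family.

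First I would establish part~(i). By Theorem~\ref{mainthm} there is a $k \ge 0$ such that $\Delta^k f$ is a covariant, i.e. extends to an $\SL(V) \times \SL(W)$-equivariant \emph{regular} map $F : \wedge^2 V \otimes W \to Y$ (after dividing, a $G$-equivariant rational map $F = (\Delta^k f$-covariant$)/\Delta^k$). Restricting $F$ along $u_1$ gives a rational map $f_1 := F \circ u_1 : \Aff^5 \ratto Y$. This is $D$-equivariant because $D \subset \SL(V) \times \SL(W)$, the action of $D$ on $\Aff^5$ compatible with $u_1$ is the one computed in Lemma~\ref{lem:D}, and $F$ is $G$-equivariant. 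Moreover (\ref{x1x}) gives $f_1(a,\ldots,a) = F(u_1(a,\ldots,a)) = F(u(a,1)) = (\Delta^k f/\Delta^k)(a,1)$, and since $\Delta = D^5$ with $D(a,1) = a(a^{10} - 11a^5 - 1)$ not identically zero, the cancellation is legitimate as rational maps and this equals $f(a,1)$. For uniqueness: the diagonal line $\{(a,\ldots,a)\}$ meets every fibre of the ``fixed product'' foliation of $\Aff^5$ in the locus where the common value is nonzero, and by Lemma~\ref{lem:D} the $D$-orbit of a point on this line with $a \ne 0$ is dense in the corresponding level set $\{\prod \la_i = a^5\}$; letting $a$ vary, the union of these $D$-orbits is dense in $\Aff^5$. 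Hence a $D$-equivariant rational map is determined by its restriction to the diagonal line, which proves uniqueness.

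Next I would prove part~(ii). If $f$ is a covariant then it is the restriction of a regular $F : \wedge^2 V \otimes W \to Y$, and $f_1 = F \circ u_1$ is then manifestly regular. Conversely, suppose $f_1$ is regular. Take any non-singular model $\phi$. By \cite[Proposition 4.1]{g1hess} its $\GL(V) \times \GL(W)$-orbit contains a Hesse model $u(a,b)$, and scaling the $\GL(W)$-action we may arrange $b = 1$, so $\phi$ is $G$-equivalent to $u(a,1) = u_1(a,\ldots,a)$. Let $F$ be the fractional covariant extending $f$ (built in the proof of Theorem~\ref{mainthm}); I claim $F$ is regular at $\phi$, hence at every non-singular model, whence by Theorem~\ref{thm:inv}(i) and the Nullstellensatz $\Delta^k F$ is regular and the argument at the end of the proof of Theorem~\ref{mainthm} (using Lemma~\ref{lem:codim1}) forces $k = 0$, i.e. $f$ is a covariant. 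To see the claim, use $G$-equivariance to reduce to showing $F$ is regular at $u(a,1) = u_1(a,\ldots,a)$; since $f_1 = F \circ u_1$ is regular on $\Aff^5$ it is in particular regular along the diagonal, but a priori this only controls $F$ along the image curve, not transversally. The fix is Lemma~\ref{lem:D}: the $D$-orbit of $u_1(a,\ldots,a)$ (for $a$ with $a(a^{10} - 11a^5 - 1) \ne 0$) is a dense subset of the level set $\{\prod \la_i = a^5\}$, and as $a$ varies these sweep out a set whose $G$-translates are dense in $\wedge^2 V \otimes W$ (this is essentially hypothesis~(i) of Proposition~\ref{prop1}, since non-singular Hesse models all lie on the diagonal line up to the $D$- and $\mu_5$-actions); regularity of $f_1$ on all of $\Aff^5$, transported by $D$-equivariance and then $G$-equivariance, gives regularity of $F$ on a dense open set meeting the orbit of $\phi$ in codimension zero, and since $F$'s only poles lie along an invariant hypersurface $\{I = 0\}$ (Lemma~\ref{lem:codim1}), either $I \nmid$ (denominator of $F$) near $\phi$ — done — or $I$ divides numerator and denominator, contradicting minimality as in Theorem~\ref{mainthm}.

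The main obstacle is the converse direction of~(ii): knowing $f_1 = F \circ u_1$ is regular everywhere on $\Aff^5$ gives information about $F$ only along the $5$-dimensional subvariety $\mathrm{Im}(u_1)$, whereas we need regularity of $F$ on an open subset of the $50$-dimensional space $\wedge^2 V \otimes W$. Bridging this gap is exactly where Lemma~\ref{lem:D} (transitivity of $D$ on fixed-product level sets) and the density statement behind hypothesis~(i) of Proposition~\ref{prop1} do the real work: together they promote the $5$-parameter family $u_1$, under the combined action of $D$ and $G$, to a dense family, so that regularity of $f_1$ propagates to regularity of $F$. Once that propagation is in hand, the ``$k = 0$'' conclusion is formal from Lemma~\ref{lem:codim1} and the irreducibility of the relevant invariants, exactly as at the end of the proof of Theorem~\ref{mainthm}.
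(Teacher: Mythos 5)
Your part (i) and the forward direction of (ii) are essentially the paper's argument (construct $f_1 = F\circ u_1$ from the fractional covariant of Theorem~\ref{mainthm}, and get uniqueness from the density statement in Lemma~\ref{lem:D}), and they are fine. The converse of (ii), however, has a genuine gap. Your chain of reasoning is: regularity of $f_1$ gives regularity of $F$ at every non-singular model, and then ``the argument at the end of the proof of Theorem~\ref{mainthm}'' forces $k=0$. But that argument only shows that the minimal denominator of $F$ has no zeros on the non-singular locus, hence is a power of $\Delta$ --- which is exactly the statement of Theorem~\ref{mainthm} (some $k\ge 0$), not $k=0$. Regularity of $F$ at all non-singular models can never force $k=0$: the paper itself exhibits integer weight discrete covariants that are not covariants (e.g.\ the generators of degrees $10$ and $20$ for $Y=S^5W^*$), and for these the associated fractional covariant is regular at every non-singular model while the minimal $k$ is $\ge 1$, since $\Delta^k$ has no zeros there. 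So your propagation via $D$- and $G$-equivariance, even granted in full, only re-derives information already contained in Theorem~\ref{mainthm} and cannot distinguish covariants from non-covariants.

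What you are missing is the one place where regularity of $f_1$ on \emph{all} of $\Aff^5$ gives genuinely new information: the points off the diagonal where the model $u_1(\la_0,\ldots,\la_4)$ is singular. This is the reason the family $u_1$ is introduced at all. The paper takes $k$ minimal with $R=\Delta^k F$ a covariant and evaluates at $\phi = u_1(0,1,1,1,1)$, which is a rational nodal quintic, so $\Delta(\phi)=0$. Since $R\circ u_1 = (\Delta\circ u_1)^k f_1$ with $f_1$ regular, if $k\ge 1$ then $R(\phi)=0$; then Lemma~\ref{lem:codim1} (the orbit closure of a nodal model is the zero locus of the irreducible invariant $\Delta$) gives $\Delta \mid R$, contradicting minimality of $k$. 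Your argument never evaluates anything at such a singular member of the family --- your reductions only ever land on diagonal points $u_1(a,\ldots,a)=u(a,1)$, i.e.\ Hesse models --- so the decisive step is absent and the proof of the converse does not go through as written.
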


\begin{proof}
  By Theorem~\ref{mainthm} there is a fractional covariant $F :
  \wedge^2 V \otimes W \ratto Y$ with $f = F \circ u$. It follows
  by~(\ref{x1x}) and Lemma~\ref{lem:D} that $f_1 = F \circ u_1$
  satisfies (i).  Uniqueness is proved using the final part of
  Lemma~\ref{lem:D}.

  It remains to show that if $f_1$ is regular then $F$ is regular.
  Theorem~\ref{mainthm} already shows that $R = \Delta^k F$ is a
  covariant for some $k \ge 0$.  We take the least such $k$. Let $\phi
  = u_1(0,1,1,1,1)$. Then $C_\phi$ is the rational nodal quintic
  parametrised by
  \[ (x_0: \ldots :x_4) = (s^5-t^5:st^4:s^2t^3:-s^3t^2:-s^4t). \] If
  $k \ge 1$ then by regularity of $f_1$ we have $R(\phi) = 0$.  Then
  Lemma~\ref{lem:codim1} shows that $R$ is divisible by $\Delta$
  contradicting our choice of $k$. Therefore $k=0$ and $F$ is a
  covariant. By the convention introduced following
  Theorem~\ref{thm:dcov}, we say that $f$ is a covariant.
\end{proof}

What makes Theorem~\ref{f1thm} useful is that we can compute $f_1$
from $f$ without going via $F$. Explicitly we put
\begin{equation}
\label{convert}
f_1(\la_0, \ldots, \la_4) = \rho_Y(g_V,g_W) f(a,1) 
\end{equation}
where $g_V$ and $g_W$ are given by~(\ref{diagvw}) and satisfy
$u_1(\la_0, \ldots, \la_4) = (g_V,g_W) u(a,1)$.  We then eliminate
$\alpha_0, \ldots, \alpha_4$ and $a$ from the right hand side, using
the relations
\begin{eqnarray}
\label{subst1}
\alpha_i^2 /(\alpha_{i+1} \alpha_{i+4}) & = & \la_i /a \\
\label{subst2}
\alpha_i^5 & = & \la_i^2 /(\la_{i+2} \la_{i+3}) \\
\label{subst3} 
a^5 & = & \la_0 \la_1 \ldots \la_4. 
\end{eqnarray}
The first of these comes from Lemma~\ref{lem:D}. The other two may be
deduced from the first using $\prod \alpha_i=1$.  One systematic way
to proceed is by using~(\ref{subst1}) to eliminate $\alpha_0$,
$\alpha_1$, $\alpha_2$, then~(\ref{subst2}) to eliminate $\alpha_3$,
$\alpha_4$ and finally~(\ref{subst3}) to eliminate $a$.

\begin{rem}
\label{rem:T}
It can be shown that Theorem~\ref{f1thm}(i) still holds if we weaken
the condition that $f$ is $\SL_2(\Z/5\Z)$-equivariant and just require
that it is equivariant for the action of $T = (\begin{smallmatrix} 1 &
  1 \\ 0 & 1 \end{smallmatrix})$.
\end{rem} 

\section{Examples}
\label{sec:ex}

We can use Theorem~\ref{f1thm} in the case $Y$ is the trivial
representation to give another proof (independent of
Theorem~\ref{thm:inv}) that the discrete invariants $c_4$ and $c_6$
are in fact invariants.  Indeed let $f$ be an integer weight discrete
invariant. The integer weight condition is that $f$ is homogeneous of
degree a multiple of $5$.  We construct $f_1$ from $f$ by making the
substitutions $a^5 \mapsto \prod \la_i$ and $b \mapsto 1$. Since no
denominators are introduced it follows by Theorem~\ref{f1thm} that $f$
is an invariant.

In the cases $Y = \wedge^2 V \otimes W$ and $\wedge^2 V^* \otimes W^*$
the following proposition was already proved in \cite{g1hess} using
evectants. We now have a general method. In the calculations that
follow all sums and products are taken over the cyclic permutations of
the subscripts mod $5$. Recall that we fixed bases $v_0, \ldots, v_4$
and $w_0, \ldots, w_4$ for $V$ and $W$. The dual bases for $V^*$ and
$W^*$ are $v^*_0, \ldots, v^*_4$ and $w^*_0, \ldots, w^*_4$.

\begin{prop}
Let $Y$ be any one of 
\[ \begin{array}{cccc}
\wedge^2 V \otimes W,     &
V^* \otimes \wedge^2 W,   &
V \otimes \wedge^2 W^*,   & 
\wedge^2 V^* \otimes W^*, \\
~\,\,\,\,\,\, V^* \otimes S^2W,        & 
\,\,\,\,\,\, S^2V^* \otimes W^*,       &
\,\,\,\,\,\, S^2V \otimes W,           &
\,\,\,\,\,\, V \otimes S^2W^*.         
\end{array} \]
Then every integer weight discrete covariant $f : \wedge^2 V \otimes W 
\to Y^{H_5}$ is a covariant. In particular the covariants
$F : \wedge^2 V \otimes W \to Y$ form a free $K[c_4,c_6]$-module of
rank $2$ or $3$ and the generators have degrees as indicated in 
\cite[Table 4.6]{invenqI}. 
\end{prop}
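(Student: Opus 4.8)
The plan is to apply Theorem~\ref{f1thm} in each of the eight cases, and for this it suffices to show that for each listed $Y$ the conversion procedure of~(\ref{convert})--(\ref{subst3}) introduces no denominators; equivalently, that the rational map $f_1 = F \circ u_1$ attached to an arbitrary integer weight discrete covariant $f$ is in fact regular on all of $\Aff^5$. First I would fix, for each $Y$, an explicit basis adapted to the weight spaces of the diagonal torus, so that the action of the matrices~(\ref{diagvw}) on $Y$ is by monomials in $\alpha_0,\ldots,\alpha_4$. Since $f$ has integer weights, restricting $f$ to the Hesse model $u(a,1)$ and then applying $\rho_Y(g_V,g_W)$ produces, in each coordinate, a Laurent polynomial in $a$ and the $\alpha_i$. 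One then substitutes using~(\ref{subst1})--(\ref{subst3}); the claim is that after this substitution every term becomes a genuine polynomial in $\la_0,\ldots,\la_4$, with no negative powers surviving.

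The key step is a bookkeeping argument on exponents. Using the $\SL_2(\Z/5\Z)$-equivariance of $f$ (in particular its homogeneity and its behaviour under the cyclic shift coming from $\tau$ and under the action of $S$), together with the explicit description of $\chi_1$ on the Hesse parameters $(a,b)$, one controls which monomials $a^i b^j$ can appear in $f(a,b)$. Combined with the monomial weights of the chosen basis of $Y^{H_5}$ under $D$, this pins down the total $\alpha$-degree and $a$-degree of each term of $\rho_Y(g_V,g_W)f(a,1)$. A short count — essentially the observation that each of the eight $Y$ in the list has the \emph{smallest possible} degrees $(r,s)$ compatible with $Y^{H_5}\neq 0$, so that the weights $(p,q)$ from~(\ref{wtformula}) are as small as possible — shows the substitutions~(\ref{subst1})--(\ref{subst3}) can only raise exponents into the non-negative range. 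Thus $f_1$ is regular and, by Theorem~\ref{f1thm}(ii), $f$ is a covariant.

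Granting that every integer weight discrete covariant is a covariant, the module structure follows formally. By Theorem~\ref{thm:dcov} and the discussion after it, the covariants for $Y$ are exactly the integer weight discrete covariants, and these form a module over the ring of discrete invariants that, by the present result, coincides with its submodule of honest covariants over $K[c_4,c_6]$. Computing $(Y^{H_5})$ as a representation of $\SL_2(\Z/5\Z)$ and decomposing the ring of discrete covariants as a module over $K[c_4,c_6]$ (using the relation $c_4^3 - c_6^2 = 1728 D^5$ and the list~(\ref{discinv})) exhibits a free basis of rank $2$ or $3$ in each case; the degrees of the generators are read off from this decomposition and agree with \cite[Table 4.6]{invenqI}. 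Finiteness and freeness here rely only on $K[c_4,c_6]$ being a polynomial ring in two variables and the module being finitely generated and torsion-free of the stated rank.

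I expect the main obstacle to be the exponent count in the second paragraph: one must verify, uniformly across all eight representations, that no Laurent monomial in $a,\alpha_0,\ldots,\alpha_4$ produced by $\rho_Y(g_V,g_W)f(a,1)$ survives the substitutions~(\ref{subst1})--(\ref{subst3}) with a negative exponent in some $\la_i$. This is where the special feature of the list — minimality of $(r,s)$, equivalently of the weights — is essential; for representations with larger degrees the same naive substitution genuinely does introduce powers of $D$ in the denominator, as the examples of Section~\ref{sec:denom} will show. Making the count clean rather than case-by-case is the delicate point.
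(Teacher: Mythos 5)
Your overall strategy is the same as the paper's: apply Theorem~\ref{f1thm} and show that the conversion~(\ref{convert}) with the substitutions~(\ref{subst1})--(\ref{subst3}) introduces no denominators, using the fact that the diagonal subgroup~(\ref{diagvw}) acts on an $H_5$-invariant monomial basis of $Y$ by monomials in the $\alpha_i$. The gap is that the decisive step is never carried out: the ``short count'' you invoke, based on the eight listed $Y$ having the smallest degrees $(r,s)$ with $Y^{H_5} \not= 0$, is a heuristic, not an argument, and minimality of $(r,s)$ is not by itself the right criterion. What actually has to be checked is a condition on each individual $H_5$-invariant basis vector: its weight under $D$ must be such that, after using~(\ref{subst1})--(\ref{subst3}), it is matched by a prefix power $a^k$ with $0 \le k \le 4$ and converts to a genuine monomial in $\la_0,\ldots,\la_4$ (no negative exponents). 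That this holds for all basis vectors of all eight $Y$ is exactly what the paper verifies, by writing out the substitution explicitly for every basis element (e.g.\ $a^2 \sum v_0^*(w_2 \wedge w_3) \mapsto \sum \la_2\la_3\, v_0^*(w_2\wedge w_3)$, etc.); once the table is in place, regularity of $f_1$ is immediate because $a$ is eliminated. The danger of arguing only with $(r,s)$ is illustrated by $Y = S^5W$, where $2r+s \equiv 0 \pmod 5$ and the degrees are still small, yet the basis vector $\sum w_0^5$ requires the prefix $a^5$, and indeed not every integer weight discrete covariant is a covariant (Proposition~\ref{prop:quintics}). So your proposed uniform count would need a precise statement and proof at the level of the individual $H_5$-invariant vectors; as written, the core of the proof is missing, even though you have correctly identified where the difficulty lies.

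A secondary problem is the last paragraph: freeness of the module of covariants does not follow from its being ``finitely generated and torsion-free of the stated rank'' over $K[c_4,c_6]$ --- over a polynomial ring in two variables such modules need not be free (the ideal $(c_4,c_6)$ is torsion-free of rank one and not free). The paper instead gets the rank, freeness and generator degrees from the explicit computation of the modules of integer weight discrete covariants in \cite{invenqI} (Lemma 4.4 and Table 4.6 there), combined with the first assertion of the proposition; some such input, rather than a general structural argument, is needed here too.
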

\begin{proof}
  Let $f : \wedge^2 V \otimes W \to Y^{H_5}$ be an integer weight
  discrete covariant. In each case \cite[Lemma 4.4]{invenqI} shows
  that $\dim Y^{H_5} = 2$ or $3$ and a basis is found by inspection.
  We construct $f_1$ from $f$ by making the substitutions $a^5 \mapsto
  \prod \la_i$, $b \mapsto 1$, and 
\small
\begin{align*} \\
a \textstyle\sum(v_1 \wedge v_4) w_0 
  & \mapsto \textstyle\sum \la_0 (v_1 \wedge v_4) w_0 &
  \textstyle\sum v_0^*(w_1 \wedge w_4) 
  & \mapsto \textstyle\sum v_0^*(w_1 \wedge w_4) \\
  \textstyle\sum(v_2 \wedge v_3) w_0 & 
  \mapsto \textstyle\sum(v_2 \wedge v_3) w_0 &  
a^2  \textstyle\sum v_0^*(w_2 \wedge w_3) & 
  \mapsto \textstyle\sum \la_2 \la_3 v_0^*(w_2 \wedge w_3) \\ \\
  \textstyle\sum v_0 (w^*_1 \wedge w^*_4)  
  & \mapsto \textstyle\sum v_0 (w^*_1 \wedge w^*_4) &
a^4 \textstyle\sum(v^*_1 \wedge v^*_4) w^*_0 
  & \mapsto \textstyle\sum \la_1 \la_2 \la_3 \la_4 
                                (v^*_1 \wedge v^*_4) w^*_0 \\
a^3 \textstyle\sum v_0 (w^*_2 \wedge w^*_3) & 
  \mapsto \textstyle\sum \la_0 \la_1 \la_4 v_0 (w^*_2 \wedge w^*_3) &  
  \textstyle\sum(v^*_2 \wedge v^*_3) w^*_0 & 
  \mapsto \textstyle\sum(v^*_2 \wedge v^*_3) w^*_0 \\ \\
a \textstyle\sum v_0^* w_0^2  & 
  \mapsto \textstyle\sum \la_0 v_0^* w_0^2 & 
a^2 \textstyle\sum v_0^{*2} w^*_0  & 
  \mapsto \textstyle\sum \la_2 \la_3 v_0^{*2} w^*_0 \\
  \textstyle\sum v_0^* w_1 w_4  & 
  \mapsto \textstyle\sum v_0^* w_1 w_4 &
a^4 \textstyle\sum v_1^* v_4^* w_0^*  & 
  \mapsto \textstyle\sum \la_1 \la_2 \la_3 \la_4 v_1^* v_4^* w_0^* \\
 a^2 \textstyle\sum v_0^* w_2 w_3  & 
  \mapsto \textstyle\sum \la_2 \la_3 v_0^* w_2 w_3 &
   \textstyle\sum v_2^* v_3^* w^*_0  & 
  \mapsto \textstyle\sum v_2^* v_3^* w^*_0 \\ \\ 
a^3 \textstyle\sum v_0^2 w_0  & 
  \mapsto \textstyle\sum \la_0 \la_1 \la_4 v_0^2 w_0 & 
a^4 \textstyle\sum v_0 w_0^{*2}  & 
  \mapsto \textstyle\sum \la_1 \la_2 \la_3 \la_4  v_0 w_0^{*2} \\
a \textstyle\sum v_1 v_4 w_0   & 
  \mapsto \textstyle\sum \la_0 v_1 v_4 w_0 &
  \textstyle\sum v_0 w_1^* w_4^*  & 
  \mapsto \textstyle\sum v_0 w^*_1 w^*_4 \\
    \textstyle\sum v_2 v_3 w_0  & 
  \mapsto \textstyle\sum v_2 v_3 w_0 &
 a^3 \textstyle\sum v_0 w^*_2 w^*_3  & 
  \mapsto \textstyle\sum \la_0 \la_1 \la_4 v_0 w^*_2 w^*_3 \\
\end{align*}
\normalsize 
Since these substitutions eliminate $a$ it is clear that
no denominators are introduced. It follows by Theorem~\ref{f1thm} that
$f$ is a covariant.
\end{proof}

\begin{prop}
\label{prop:quintics}
Let $Y$ be any one of $S^5 W, S^5 V, S^5 V^*, S^5 W^*$.  Then the
covariants $F : \wedge^2 V \otimes W \to Y$ form a free
$K[c_4,c_6]$-module of rank $6$ with generators in degrees $10, 20,
30, 30, 40, 50$ except in the case $Y = S^5 W^*$ where the generators
have degrees $30,40,50,50,60,70$.
\end{prop}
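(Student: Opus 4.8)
===PROOF PROPOSAL===

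The plan is to apply Theorem~\ref{f1thm} and the explicit conversion procedure~(\ref{convert})--(\ref{subst3}), exactly as in the preceding proposition, but now tracking denominators carefully since for the spaces $S^5W$, $S^5V$, etc.\ the substitutions do \emph{not} eliminate $a$ cleanly. First I would fix one of the four representations, say $Y = S^5W$, and use \cite[Lemma 4.4]{invenqI} (or a direct computation with the $H_5$-action~(\ref{thetavw})) to determine $\dim Y^{H_5}$; since $Y$ is homogeneous of degree $(r,s)=(0,5)$ and $2r+s=5\equiv 0\pmod 5$, the space $Y^{H_5}$ is nonzero, and I expect $\dim Y^{H_5}=6$, matching the claimed rank. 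A convenient basis for $(S^5W)^{H_5}$ is the six $H_5$-invariant quintic monomials/orbit-sums in $w_0,\dots,w_4$; the $\Gamma=\SL_2(\Z/5\Z)$-action on this $6$-dimensional space is then a concrete matrix representation that can be decomposed or handled directly.

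The next step is to compute the space of integer-weight discrete covariants $f:\Aff^2\to Y^{H_5}$. These are pairs (binary forms in $a,b$ with values in the $6$-dimensional space) satisfying $f\circ\chi_1(\gamma)=\rho_Y(\gamma)f$ for the generators $S,T$ of $\Gamma$; invariance under $T$ pins down the weight/degree congruence and most of the monomial structure, and invariance under $S$ cuts this down to a free $K[c_4,c_6]$-module (the discrete covariants always form such a module, since $c_4,c_6$ generate the discrete invariants). Counting dimensions degree by degree — equivalently writing a Hilbert series for the $\Gamma$-isotypic piece and dividing by $(1-t^{20})(1-t^{30})$ — should yield six generating discrete covariants, and from the $T$- and $S$-eigenvalue bookkeeping one reads off their degrees: $10,20,30,30,40,50$ in the generic case, and $30,40,50,50,60,70$ in the exceptional case $Y=S^5W^*$. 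I would then form $f_1 = F\circ u_1$ via~(\ref{convert}), substituting as dictated by the weights of each basis vector of $Y^{H_5}$ (e.g.\ $a\sum v_0^*w_0^2\mapsto\sum\la_0 v_0^*w_0^2$ and its analogues), using~(\ref{subst1})--(\ref{subst3}) to clear $\alpha_i$ and $a$. By Theorem~\ref{f1thm}(ii), $f$ is a genuine covariant precisely when $f_1$ is regular (no negative powers of the $\la_i$ surviving), so it remains to verify, for each of the six proposed generators, that the substitution leaves a polynomial; conversely one checks the module generated has the right Hilbert series, so nothing is missed.

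The main obstacle — and the reason the four cases split into two patterns — is the denominator analysis for $f_1$, i.e.\ determining exactly which discrete covariants survive as covariants and in what degrees. Unlike the previous proposition, where the substitution table visibly eliminated $a$ with no leftover, here the relation $a^5=\prod\la_i$ can force fractional powers unless the discrete covariant's leading behaviour is right, and the asymmetry between $S^5W$ and $S^5W^*$ reflects how the weights $(p,q)$ from~(\ref{wtformula}) interact with the exponents of $a$ attached to the $H_5$-invariant quintics in $W^*$ versus $W$. Concretely, for $S^5W^*$ the natural basis vectors of $Y^{H_5}$ carry higher powers of $a$, so the minimal $k$ with $\Delta^kF$ regular (equivalently the minimal degree generator of the covariant module) is shifted upward by $20$ in each slot, giving $30,40,50,50,60,70$. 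I would make this precise by running the systematic elimination of $\alpha_0,\alpha_1,\alpha_2$ via~(\ref{subst1}), then $\alpha_3,\alpha_4$ via~(\ref{subst2}), then $a$ via~(\ref{subst3}) on each candidate generator, checking regularity; the bookkeeping is routine once the basis of $Y^{H_5}$ and the $\Gamma$-action are in hand, and the only genuinely delicate point is confirming that no \emph{additional} low-degree covariant has been overlooked — which follows from the Hilbert-series count for the discrete covariant module together with Theorem~\ref{f1thm}.
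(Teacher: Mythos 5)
Your overall strategy is the paper's: restrict to the discrete covariants, then use Theorem~\ref{f1thm} and the substitutions~(\ref{subst1})--(\ref{subst3}) to decide which of them are genuine covariants. But there is a concrete gap in how you extract the generator degrees. The module of integer weight discrete covariants for each of these four $Y$ is free of rank $6$ with generators in degrees $10,10,20,20,30,30$ --- \emph{not} $10,20,30,30,40,50$; those latter degrees cannot be ``read off from the $T$- and $S$-eigenvalue bookkeeping'' on the discrete side, because they belong to the covariant submodule, which is a proper $K[c_4,c_6]$-submodule cut out by the regularity test. Consequently your plan to ``verify, for each of the six proposed generators, that the substitution leaves a polynomial'' breaks down: the individual discrete generators (the analogues of $F_{10},G_{10},\dots$ of Section~\ref{sec:quintics}) fail the test, and the actual covariant generators are nontrivial $K[c_4,c_6]$-combinations of them, e.g.\ $S_{10}=F_{10}-G_{10}$, $S_{40}=c_6F_{10}+c_4F_{20}$, $S_{50}=c_4^2F_{10}+c_6F_{20}$. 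The step you cannot skip is to convert regularity of $f_1$ into explicit divisibility conditions on particular coefficients (for $S^5W,S^5V,S^5V^*$: the coefficient of $\sum w_0^5$, $\sum v_0^5$, $\sum v_0^{*5}$ must be divisible by $a^5$; for $S^5W^*$: the coefficient of $\sum w_0^{*5}$ by $a^{10}$ and of $\sum w_0^{*3}w_1^*w_4^*$ by $a^5$), count the number of independent constraints in each degree ($1$ in each degree $10m$, $m\ge1$, in the first three cases; for $S^5W^*$ the degree $10$ and $20$ discrete covariants drop out entirely and there are $3$ constraints in each degree $10m$, $m\ge3$), and then read the generator degrees from the resulting Hilbert series, e.g.\ $\frac{2(t^{10}+t^{20}+t^{30})}{(1-t^{20})(1-t^{30})}-\frac{t^{10}}{1-t^{10}}$ in the generic case.

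Your heuristic explanation of the $S^5W^*$ pattern is also not right as stated: the covariant module there is not obtained by multiplying anything by a power of $\Delta$ (or $D$), and a uniform shift of $20$ in each slot cannot arise that way since $\deg\Delta=60$ and $\deg D=12$; the shift is an artifact of the Hilbert-series computation with the stronger divisibility conditions, not of a minimal $k$ with $\Delta^kF$ regular. Finally, note that the ring of discrete invariants is generated by $D,c_4,c_6$ (with $c_4^3-c_6^2=1728D^5$), not by $c_4,c_6$ alone; it is only the \emph{integer weight} discrete covariants that form a $K[c_4,c_6]$-module, and freeness of the covariant submodule (needed for the ``free module of rank $6$'' claim) should be justified, e.g.\ via Theorem~\ref{thm:indep}(i) or by exhibiting the explicit basis as in~(\ref{qbasis}).
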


\begin{proof}
  The module of integer weight discrete covariants is computed as
  described in \cite{invenqI} and is found to be a free
  $K[c_4,c_6]$-module of rank $6$ with generators in degrees
  $10,10,20,20,30,30$. We use Theorem~\ref{f1thm} to decide which of
  these are covariants.  We construct $f_1$ from $f$ by making the
  substitutions $a^5 \mapsto \prod \la_i$, $b \mapsto 1$ and 
\small
\begin{align*} a^5 \textstyle\sum w_0^5 & \mapsto \textstyle\sum
    \la_0^3 \la_1 \la_4 w_0^5 & a^5 \textstyle\sum v_0^5 &
    \mapsto  \textstyle\sum \la_0 \la_1^2 \la_4^2 v_0^5 \\
    a^4 \textstyle\sum w_0^3 w_1 w_4 & \mapsto \textstyle\sum \la_0^2
    \la_1 \la_4 w_0^3 w_1 w_4 & a^3 \textstyle\sum v_0^3 v_1 v_4 &
    \mapsto  \textstyle\sum \la_0 \la_1 \la_4 v_0^3 v_1 v_4 \\
    a^3 \textstyle\sum w_0 w_1^2 w_4^2 & \mapsto \textstyle\sum \la_0
    \la_1 \la_4 w_0 w_1^2 w_4^2 & a \textstyle\sum v_0 v_1^2 v_4^2 &
    \mapsto  \textstyle\sum \la_0 v_0 v_1^2 v_4^2 \\
    a^2 \textstyle\sum w_0 w_2^2 w_3^2 & \mapsto \textstyle\sum \la_2
    \la_3 w_0 w_2^2 w_3^2 & a^4 \textstyle\sum v_0 v_2^2 v_3^2 &
    \mapsto  \textstyle\sum \la_1 \la_2 \la_3 \la_4 v_0 v_2^2 v_3^2 \\
    a \textstyle\sum w_0^3 w_2 w_3 & \mapsto \textstyle\sum \la_0
    w_0^3 w_2 w_3 & a^2 \textstyle\sum v_0^3 v_2 v_3 &
    \mapsto  \textstyle\sum \la_1 \la_4 v_0^3 v_2 v_3 \\
    \textstyle\prod w_0 & \mapsto \textstyle\prod w_0 &
    \textstyle\prod v_0 & \mapsto  \textstyle\prod v_0 \\ \\
    a^5 \textstyle\sum v^{*5}_0 & \mapsto \textstyle\sum \la_0 \la_2^2
    \la_3^2 v^{*5}_0 & a^{10} \textstyle\sum w^{*5}_0 &
    \mapsto  \textstyle\sum \la_1^2 \la_2^3 \la_3^3 \la_4^2 w^{*5}_0 \\
    a^2 \textstyle\sum v^{*3}_0 v^{*}_1 v^{*}_4 & \mapsto
    \textstyle\sum \la_2 \la_3 v^{*3}_0 v^{*}_1 v^{*}_4 & a^6
    \textstyle\sum w^{*3}_0 w^{*}_1 w^{*}_4 &
    \mapsto \textstyle\sum \la_1 \la_2^2 \la_3^2 \la_4 w^{*3}_0 w^{*}_1 w^{*}_4 \\
    a^4 \textstyle\sum v^{*}_0 v^{*2}_1 v^{*2}_4 & \mapsto
    \textstyle\sum \la_1 \la_2 \la_3 \la_4 v^{*}_0 v^{*2}_1 v^{*2}_4 &
    a^2 \textstyle\sum w^{*}_0 w^{*2}_1 w^{*2}_4 &
    \mapsto  \textstyle\sum \la_2 \la_3 w^{*}_0 w^{*2}_1 w^{*2}_4 \\
    a \textstyle\sum v^{*}_0 v^{*2}_2 v^{*2}_3 & \mapsto
    \textstyle\sum \la_0 v^{*}_0 v^{*2}_2 v^{*2}_3 & a^3
    \textstyle\sum w^{*}_0 w^{*2}_2 w^{*2}_3 &
    \mapsto  \textstyle\sum \la_0 \la_1 \la_4 w^{*}_0 w^{*2}_2 w^{*2}_3 \\
    a^3 \textstyle\sum v^{*3}_0 v^{*}_2 v^{*}_3 & \mapsto
    \textstyle\sum \la_0 \la_2 \la_3 v^{*3}_0 v^{*}_2 v^{*}_3 & a^4
    \textstyle\sum w^{*3}_0 w^{*}_2 w^{*}_3 &
    \mapsto  \textstyle\sum \la_1 \la_2 \la_3 \la_4 w^{*3}_0 w^{*}_2 w^{*}_3 \\
    \textstyle\prod v^{*}_0 & \mapsto \textstyle\prod v^{*}_0 &
    \textstyle\prod w^{*}_0 & \mapsto  \textstyle\prod w^{*}_0 \\
\end{align*} 
\normalsize

In the cases $Y = S^5 W, S^5 V, S^5 V^*$ an integer weight discrete
covariant is a covariant if and only if the coefficient of $\sum
w_0^5, \sum v_0^5, \sum v^{*5}_0$ is divisible by $a^5$.  Computing
the discrete covariants we find that there is a single constraint in
degree $10m$ for each $m \ge 1$.  The covariants therefore have
Hilbert series
\[ \frac{2(t^{10}+t^{20}+t^{30})}{(1-t^{20})(1-t^{30})} -
\frac{t^{10}}{1-t^{10}} =
\frac{t^{10}+t^{20}+2t^{30}+t^{40}+t^{50}}{(1-t^{20})(1-t^{30})}. \]
In Section~\ref{sec:quintics} we give further details of the
covariants in the case $Y = S^5 W$.

In the case $Y = S^5 W^*$ an integer weight discrete covariant is a
covariant if and only if the coefficient of $\sum w^{*5}_0$ is
divisible by $a^{10}$ and the coefficient of $\sum w_0^{*3} w_1^*
w_4^*$ is divisible by $a^{5}$.  We find that the discrete covariants
of degrees $10$ and $20$ are not covariants and that there are $3$
constraints in degrees $10m$ for each $m \ge 3$.  The covariants
therefore have Hilbert series
\[  \frac{2(t^{10}+t^{20}+t^{30})}{(1-t^{20})(1-t^{30})} 
- 2 t^{10} - 2 t^{20 }- \frac{3 t^{30}}{1-t^{10}} = 
\frac{t^{30}+t^{40}+2t^{50}+t^{60}+t^{70}}{(1-t^{20})(1-t^{30})}. \]
\end{proof}

\begin{example}
  The degree $10$ covariant for $Y = S^{5} W$ is 
\small
\begin{align*}
S_{10} & = a^5 b^5 \textstyle\sum w_0^5 - a^4 b (a^5-3b^5) 
 \textstyle\sum w_0^3 w_1 w_4 + a^3 b^2 (a^5+2 b^5) 
 \textstyle\sum w_0 w_1^2 w_4^2  \\  + & a^2 b^3 (2a^5-b^5) 
 \textstyle\sum w_0 w_2^2 w_3^2 - a b^4 (3 a^5+b^5) 
 \textstyle\sum w_0^3 w_2 w_3 + (a^{10}-16 a^5 b^5-b^{10}) \textstyle\prod w_0 
\end{align*}  
\normalsize
and the degree $30$ covariant for $Y = S^{5} W^*$ is
\small
\begin{align*}
T_{30} & = 125 a^{10} b^{10} (3 a^{10}
 - 8 a^5 b^5 - 3 b^{10})\textstyle\sum w^{*5}_0 \\
 - & 5 a^6 b^4 (3 a^{20} + 134 a^{15} b^5 + 57 a^{10} b^{10} 
+ 216 a^5 b^{15} - 22 b^{20})
\textstyle\sum w^{*3}_0 w^{*}_1 w^{*}_4 \\
 + & a^2 b^3 (32 a^{25} - 195 a^{20} b^5 + 4110 a^{15} b^{10} 
+ 900 a^{10} b^{15} + 480 a^5 b^{20} + 9 b^{25})
\textstyle\sum w^{*}_0 w^{*2}_1 w^{*2}_4 \\
 - & a^3 b^2 (9 a^{25} - 480 a^{20} b^5 + 900 a^{15} b^{10} 
- 4110 a^{10} b^{15} - 195 a^5 b^{20} - 32 b^{25})
\textstyle\sum w^{*}_0 w^{*2}_2 w^{*2}_3 \\
 - & 5 a^4 b^6 (22 a^{20} + 216 a^{15} b^5 - 57 a^{10} b^{10} 
+ 134 a^5 b^{15} - 3 b^{20}) 
\textstyle\sum w^{*3}_0 w^{*}_2 w^{*}_3 \\
 + & (a^{30} - 258 a^{25} b^5 + 3435 a^{20} b^{10} 
- 23040 a^{15} b^{15} - 3435 a^{10} b^{20} - 258 a^5 b^{25} - b^{30})
\textstyle\prod w^{*}_0.
\end{align*}  
\normalsize 
The covariant $S_{10}$ is (a scalar multiple of) the
determinant of the Jacobian matrix of the quadrics defining
$C_{\phi}$.  We do not know of any similar construction for $T_{30}$.
The contraction of these two covariants is $\langle S_{10}, T_{30}
\rangle = c_4^2$. In \cite[Section 8]{g1inv} we used the existence of
a such a covariant $T_{30}$ to justify our algorithm for computing the
invariants in the case of a singular genus one model.
\end{example}

In \cite[Section 7]{invenqI} we showed that the covariant $\Omega_5$
of degree $5$ in the following proposition represents the invariant
differential.

\begin{prop}
  The covariants for $Y= 
  \wedge^2 W^* \otimes S^2 W$ form a free $K[c_4,c_6]$-module of rank
  $6$ with generators in degrees $5, 15, 15, 25, 25, 35$.
\end{prop}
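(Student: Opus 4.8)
The plan is to mimic the proof of Proposition~\ref{prop:quintics}: first compute the $K[c_4,c_6]$-module of integer weight discrete covariants for $Y=\wedge^2W^*\otimes S^2W$, and then use Theorem~\ref{f1thm} to decide which of these are covariants.

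I would start by pinning down $Y^{H_5}$. The centre of $H_5$ acts on $W$ through $\theta_W$ as the scalar $\zeta_5$, hence on $\wedge^2W^*$ as $\zeta_5^{-2}$ and on $S^2W$ as $\zeta_5^{2}$. Since $H_5$ has a unique irreducible representation with each non-trivial central character, $\wedge^2W^*\cong U^{\oplus2}$ and $S^2W\cong(U^*)^{\oplus3}$, where $U$ is the five-dimensional irreducible with central character $\zeta_5^{-2}$; so $Y\cong(U\otimes U^*)^{\oplus6}\cong\Hom(U,U)^{\oplus6}$. As $\Hom(U,U)$ is the regular representation of $H_5/Z(H_5)\cong(\Z/5\Z)^2$, Schur's lemma gives $\dim(U\otimes U^*)^{H_5}=1$, whence $\dim Y^{H_5}=6$ (this also follows from \cite[Lemma 4.4]{invenqI}). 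An explicit basis is then found by inspection: it consists of the six $\theta_W(\tau)$-orbit sums of $\theta_W(\sigma)$-weight-zero tensors $(w_i^*\wedge w_j^*)\otimes w_kw_l$, one for each choice of a $\tau$-orbit in $\wedge^2W^*$ (there are two, those of $w_0^*\wedge w_1^*$ and of $w_0^*\wedge w_2^*$) together with a $\tau$-orbit in $S^2W$ (there are three, those of $w_0^2$, $w_0w_1$ and $w_0w_2$), the $S^2W$-representative being chosen so that its $\sigma$-weight cancels that of the $\wedge^2W^*$-representative.

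Next I would compute the module of integer weight discrete covariants $f:\Aff^2\to Y^{H_5}$ by the finite-group invariant theory of $H_5$ and $\SL_2(\Z/5\Z)$, as in \cite{invenqI}; its rank equals $\dim Y^{H_5}=6$, and $Y$ being homogeneous of degrees $(0,0)$ the integer weight condition is automatic (it says only $5\mid\deg f$). To single out the covariants I would invoke Theorem~\ref{f1thm}: form $f_1:\Aff^5\ratto Y$ from $f$ by the substitutions $a^5\mapsto\prod\la_i$, $b\mapsto1$, together with the rule coming from $u_1(\la_0,\dots,\la_4)=(g_V,g_W)u(a,1)$ --- that is, solve for the diagonal pair $(g_V,g_W)$ of (\ref{diagvw}) using (\ref{subst1})--(\ref{subst3}), and then replace each of the six basis vectors of $Y^{H_5}$, weighted by the appropriate power of $a$, by its image under $\rho_Y(g_V,g_W)$, which is that basis vector multiplied by a monomial in $\la_0,\dots,\la_4$. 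Since $\Omega_5$ (degree $5$) is already known to be a covariant, it remains to examine the higher degrees: $f$ is a covariant exactly when, after these substitutions, the coefficients of the relevant basis vectors are divisible by the stated powers of $a$. Tallying these constraints and subtracting from the Hilbert series of the discrete-covariant module should leave
\[
\frac{t^5+2t^{15}+2t^{25}+t^{35}}{(1-t^{20})(1-t^{30})},
\]
so the covariants are generated in degrees $5,15,15,25,25,35$. Freeness of rank $6$ then follows by exhibiting six covariants of these degrees ($\Omega_5$ and five others) and checking, in the manner of Section~\ref{sec:indep}, that their specialisations to the genus one models of Lemma~\ref{lem:codim1} are everywhere linearly independent over the residue fields; by the Hilbert series count they must then form a free $K[c_4,c_6]$-basis.

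I expect the middle step to be the main obstacle: recording the substitution rule on a basis of the six-dimensional space $Y^{H_5}$ without slips, and extracting from it precisely which coefficients of $f_1$ must be divisible by which powers of $a$ --- equivalently, confirming that the discrete-covariant module has the Hilbert series for which the above subtraction yields the numerator $t^5+2t^{15}+2t^{25}+t^{35}$. The remaining steps (identifying $Y^{H_5}$, and passing from the Hilbert series to freeness via the specialisation criterion) are routine given Theorems~\ref{mainthm} and \ref{f1thm} and the results of Section~\ref{sec:indep}.
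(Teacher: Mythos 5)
Your proposal follows essentially the same route as the paper: compute the module of integer-weight discrete covariants for $Y=\wedge^2 W^*\otimes S^2W$ (of rank $\dim Y^{H_5}=6$) and then apply Theorem~\ref{f1thm} via the substitution recipe on a $\tau$-orbit basis of $Y^{H_5}$, so the plan is sound. Two small corrections: the paper's computation shows the integer-weight discrete module is already generated in degrees $5,15,15,25,25,35$ and that \emph{no} divisibility constraints arise for this $Y$ (every integer-weight discrete covariant is a covariant), so your anticipated Hilbert-series subtraction is vacuous and the closing freeness check via the specialisation criterion of Section~\ref{sec:indep} is unnecessary (the covariants simply coincide with the free discrete module); also, the integer-weight condition is not ``automatic'' --- it is the genuine restriction $5\mid\deg f$ (for instance $D$ times a covariant is a discrete covariant of degree $\not\equiv 0 \pmod 5$).
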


\begin{proof}
  The module of integer weight discrete covariants is computed as
  described in \cite{invenqI} and is found to be a free
  $K[c_4,c_6]$-module of rank $6$ with generators in degrees $5, 15,
  15, 25, 25, 35$.  We use Theorem~\ref{f1thm} to decide which of
  these are covariants.  We construct $f_1$ from $f$ by making the
  substitutions $a^5 \mapsto \prod \la_i$, $b \mapsto 1$ and
\begin{align*}
a \textstyle\sum (w_1^* \wedge w_4^*) w_0^2 & \mapsto  
\textstyle\sum \la_0 (w_1^* \wedge w_4^*) w_0^2 \\ 
\textstyle\sum (w_1^* \wedge w_4^*) w_1 w_4 & \mapsto  
\textstyle\sum (w_1^* \wedge w_4^*) w_1 w_4 \\
a^2 \textstyle\sum (w_1^* \wedge w_4^*) w_2 w_3 & \mapsto  
\textstyle\sum \la_2 \la_3 (w_1^* \wedge w_4^*) w_2 w_3 \\
a^4 \textstyle\sum (w_2^* \wedge w_3^*) w_0^2 & \mapsto  
\textstyle\sum \la_0^2 \la_1 \la_4 (w_2^* \wedge w_3^*) w_0^2 \\ 
a^3 \textstyle\sum (w_2^* \wedge w_3^*) w_1 w_4 & \mapsto  
\textstyle\sum \la_0 \la_1 \la_4 (w_2^* \wedge w_3^*) w_1 w_4 \\ 
\textstyle\sum (w_2^* \wedge w_3^*) w_2 w_3 & \mapsto  
\textstyle\sum (w_2^* \wedge w_3^*) w_2 w_3. 
\end{align*}
In this case every integer weight discrete covariant is 
a covariant. 
\end{proof}

\section{Independence of covariants}
\label{sec:indep}

Let $Y$ be a homogeneous rational representation of $\GL(V) \times \GL(W)$.

\begin{thm}
\label{thm:indep}
Assume $\charic K = 0$.  
\begin{enumerate}
\item The module of covariants $\wedge^2 V \otimes W \to Y$ is a free
  $K[c_4,c_6]$-module of rank $m = \dim Y^{H_5}$.
\item Let $F_1, \ldots, F_m$ be homogeneous covariants for $Y$.  Then
  $F_1, \ldots, F_m$ are a free basis for the module in (i) if and
  only if for each $\phi$ with $C_{\phi}$ either an elliptic normal
  quintic or rational nodal quintic, $F_1(\phi), \ldots ,F_m(\phi)$
  are linearly independent over $K$.
\end{enumerate}
\end{thm}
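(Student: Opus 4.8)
The plan is to derive (i) by bootstrapping from the freeness of the module of integer weight discrete covariants, using Theorem~\ref{mainthm} and the fact that $K[c_4,c_6]$ is a two-dimensional regular ring; and then to prove (ii) from (i) together with the classification of codimension-one orbit closures in Lemma~\ref{lem:codim1}. For (i), let $M$ be the $K[c_4,c_6]$-module of covariants $\wedge^2 V \otimes W \to Y$. Restriction to the Hesse family (Theorem~\ref{thm:dcov}) embeds $M$ as a graded submodule of the module $M_{\mathrm{disc}}$ of integer weight discrete covariants, which is a free $K[c_4,c_6]$-module of rank $m=\dim Y^{H_5}$ (computed as in \cite{invenqI}; alternatively this follows from the Shephard--Todd--Chevalley theorem, as the finite group of order $600 = 20\cdot 30$ acting on $\Aff^2$ has polynomial invariant ring $K[c_4,c_6]$ by \eqref{discinv}). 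By Theorem~\ref{mainthm} we have $M[\Delta^{-1}]=M_{\mathrm{disc}}[\Delta^{-1}]$, so $M$ is finitely generated and torsion-free of rank $m$; being graded over a two-dimensional regular ring, it is free as soon as it is reflexive, i.e.\ equal to the intersection of its localizations at the height-one primes. For a height-one prime $\mathfrak{p}\neq(\Delta)$ we have $M_{\mathfrak{p}}=(M_{\mathrm{disc}})_{\mathfrak{p}}$, and since $K[c_4,c_6]$ is a UFD these intersect in $M_{\mathrm{disc}}[\Delta^{-1}]=M[\Delta^{-1}]$; at $(\Delta)$ the module $M_{(\Delta)}$ is finitely generated and torsion-free over a discrete valuation ring, hence free; and a coprimality argument ($1=a\Delta^n+bs$ with $s\notin(\Delta)$) gives $M=M[\Delta^{-1}]\cap M_{(\Delta)}$. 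Hence $M$ is reflexive, so free of rank $m$.

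For (ii), both directions use Lemma~\ref{lem:codim1}: for $\phi$ with $C_\phi$ an elliptic normal or rational nodal quintic the Zariski closure $\overline{(\GL(V)\times\GL(W))\cdot\phi}$ is the zero locus of an irreducible homogeneous invariant $I$ of positive degree; moreover (Lemmas~\ref{lem:orbits}, \ref{lem:van} and scaling) the values $(c_4(\phi),c_6(\phi))$ fill all of $\Aff^2\setminus V(\Delta)$ as $\phi$ runs over elliptic normal quintics, while rational nodal quintics all have $\Delta(\phi)=0$. For the ``only if'' direction, suppose $F_1,\dots,F_m$ is a free basis yet $\sum\lambda_i F_i(\phi)=0$ for some $\lambda_i\in K$ not all zero and some such $\phi$. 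Put $G=\sum\lambda_i F_i$; then $G(\phi)=0$, and by $\SL(V)\times\SL(W)$-equivariance together with homogeneity $G$ vanishes on the whole $\GL\times\GL$-orbit of $\phi$, hence on $V(I)$. Since $V(I)$ is a cone, each homogeneous component of $G$ vanishes on $V(I)$, so we may replace $G$ by a nonzero homogeneous component $\sum_{\deg F_i=d}\lambda_i F_i$ and assume $G$ is homogeneous of degree $d$. As $I$ is irreducible, the Nullstellensatz gives $G=I\,G'$ with $G'$ a homogeneous covariant of degree $d-\deg I$; writing $G'=\sum_j\nu_j F_j$ with $\nu_j\in K[c_4,c_6]$ homogeneous and comparing coefficients in the free basis, each $j$ with $\deg F_j=d$ satisfies $\lambda_j=I\nu_j$, which forces $\nu_j=0$ (else $\nu_j$ has negative degree) and hence $\lambda_j=0$, a contradiction.

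For the ``if'' direction, assume $F_1(\phi),\dots,F_m(\phi)$ are linearly independent for every such $\phi$. Evaluating a relation $\sum r_i F_i=0$ with $r_i\in K[c_4,c_6]$ at elliptic normal quintics, whose $(c_4,c_6)$-values are Zariski dense, forces $r_i\equiv 0$, so $F_1,\dots,F_m$ are $K[c_4,c_6]$-independent and span a free submodule $M'\subseteq M$ of rank $m$. By (i) pick a homogeneous free basis $E_1,\dots,E_m$ of $M$ and write $F_i=\sum_j a_{ij}E_j$ with $A=(a_{ij})$ over $K[c_4,c_6]$; then $M'=M$ iff $\det A\in K^\times$, while $\det A\neq 0$ by the independence just shown. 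If $\det A$ were a nonzero non-unit it would be a form of positive degree: when $\Delta\nmid\det A$ we have $V(\det A)\not\subseteq V(\Delta)$, so pick a point of $V(\det A)$ off $V(\Delta)$ and an elliptic normal quintic $\phi$ realizing it; when $\Delta\mid\det A$ take any rational nodal quintic $\phi$. In either case $\det A$ vanishes at $(c_4(\phi),c_6(\phi))$, so $A(\phi)$ is singular, giving a nontrivial $K$-linear relation among $F_1(\phi),\dots,F_m(\phi)$ and contradicting the hypothesis. Hence $\det A\in K^\times$ and $F_1,\dots,F_m$ is a free basis.

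The main obstacle is the freeness statement in (i): being torsion-free of rank $m$ and locally free in codimension one does not by itself imply freeness (a maximal ideal of $K[x,y]$ is a counterexample), so one must invoke the reflexivity criterion special to two-dimensional regular rings, fed by the DVR at $(\Delta)$ and by Theorem~\ref{mainthm} elsewhere. In (ii) the key point is that Lemma~\ref{lem:codim1} pins down precisely the models whose orbit closure is a hypersurface cut out by a single irreducible invariant, which is exactly what legitimizes dividing $G$ by $I$ in the ``only if'' direction; for more degenerate $C_\phi$ that step would break down.
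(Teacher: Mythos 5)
Both parts of your argument have genuine gaps at exactly the steps that carry the real content.

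In (i), the reduction of freeness to reflexivity is fine, but your verification of reflexivity collapses at the key identity $M=M[\Delta^{-1}]\cap M_{(\Delta)}$. You justify it by ``$1=a\Delta^n+bs$ with $s\notin(\Delta)$'', but in the two-dimensional ring $K[c_4,c_6]$ coprime elements need not be comaximal: for $s=c_4$ the ideal $(\Delta^n,c_4)$ is contained in $(c_4,c_6)$, so no such identity exists. Moreover the equality you want is precisely the content of reflexivity, and it is exactly what fails in your own cautionary example: the maximal ideal $(x,y)\subset K[x,y]$ is finitely generated, torsion-free of the correct rank and agrees with the free module at every height-one localization, yet $(x,y)[x^{-1}]\cap (x,y)_{(x)}=K[x,y]\neq(x,y)$. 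Nothing you have established distinguishes $M$ from this situation, so the freeness is not proved; tellingly, your argument never uses the hypothesis $\charic K=0$, which is where the paper's proof gets its traction. The paper uses linear reductivity: the Reynolds operator for $\SL(V)\times\SL(W)$ exhibits the covariants as a direct summand of the module of all polynomial maps $\wedge^2V\otimes W\to Y$, whence a projective, hence graded-free, $K[c_4,c_6]$-module, with the rank then computed via Theorem~\ref{mainthm} and \cite[Lemma~4.5]{invenqI}. (Your skeleton could instead be completed by proving the intersection identity directly: if $sR_1=\Delta^nR_2$ with $R_1,R_2$ covariants and $\Delta\nmid s$, then since $\Delta$ is irreducible in $K[\wedge^2V\otimes W]$ by Lemma~\ref{lem:codim1}, $\Delta^n$ divides $R_1$ componentwise and $R_1/\Delta^n$ is again a regular covariant -- but some such input must be supplied; the Bezout identity cannot be.)

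In (ii), ``only if'', you take $G=\sum\lambda_iF_i$ with constant $\lambda_i$ and $F_i$ of possibly different degrees, and assert that $G$ vanishes on the $\GL(V)\times\GL(W)$-orbit closure $V(I)$ ``by equivariance together with homogeneity'', so that you may pass to a single homogeneous component. This is the crux and it is unjustified: $\SL\times\SL$-equivariance only gives vanishing on the $\SL\times\SL$-orbit, which has codimension $2$, not on the codimension-one cone $V(I)$; under scalars the summands rescale by different powers, so $G(t\phi)=\sum t^{\deg F_i}\lambda_iF_i(\phi)$ need not vanish, and evaluation at the single point $\phi$ does not respect the grading, so the relation cannot simply be split degree by degree. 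This is precisely what the paper's extra device is for: by \cite[Proposition~4.7]{g1inv} the stabiliser of $\phi$ in $\SL(V)\times\GL(W)$ contains elements with $\det g_W$ an arbitrary $d$-th root of unity ($d=6,4,2$ according as $c_4(\phi)=0$, $c_6(\phi)=0$, or neither); twisting the relation by these splits it according to the weights $q_i$ modulo $d$, after which the relevant degrees agree modulo $5d$ and multiplication by monomials in $c_4,c_6$ not vanishing at $\phi$ produces a genuinely homogeneous covariant $F=\sum I_iF_i$ with $F(\phi)=0$ and some $I_i(\phi)\neq0$; only then does your division-by-$I$ step (which does match the paper's) give the contradiction. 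As written, your argument covers only the case where all the $F_i$ have equal degree. The ``if'' direction is essentially sound, though the claim that any chosen zero of $\det A$ off $V(\Delta)$ is realised as $(c_4(\phi),c_6(\phi))$ of an elliptic normal quintic needs the standard twisting/scaling justification, or a reduction to irreducible factors as in Lemma~\ref{lem:van}.
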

\begin{proof}
  (i) The fact we obtain a free module is a standard result in
  invariant theory. We have assumed $\charic K = 0$ so that $\SL(V)
  \times \SL(W)$ is linearly reductive, i.e. it has a Reynolds
  operator. Applying the Reynolds operator to the free $K[\wedge^2 V
  \otimes W]$-module of polynomial maps $\wedge^2 V \otimes W \to Y$
  shows that the covariants form a projective $K[c_4,c_6]$-module and
  hence a free $K[c_4,c_6]$-module. By Theorem~\ref{mainthm} the rank
  is the same as for the integer weight discrete covariants. We proved
  in \cite[Lemma 4.5]{invenqI} that this rank is $m$.

  (ii) Let $F_1, \ldots, F_m$ be homogeneous covariants that are a
  basis for the module in (i) and let $\phi$ be a genus one model with
  $C_{\phi}$ either an elliptic normal quintic or rational nodal
  quintic.  Suppose for a contradiction that there is a dependence
  relation
$$ \la_1 F_1(\phi) + \ldots + \la_m F_m(\phi)= 0$$
for some $\la_1, \ldots, \la_m \in K$ not all zero.  Let
\begin{equation}
\label{defd}
d= \left\{ \begin{array}{ll} 6 & \mbox{ if } c_4(\phi)=0 \\
4 & \mbox{ if } c_6(\phi)=0 \\
2 & \mbox{ otherwise. } 
\end{array} \right. 
\end{equation}
Since $\phi$ is equivalent to a Weierstrass model we see by
\cite[Proposition 4.7]{g1inv} that for every $\zeta \in \mu_d$ there
exists $g= (g_V,g_W) \in \SL(V) \times \GL(W)$ with $g\phi =\phi$ and
$\det g_W = \zeta$.  Let $F_i$ have weights $(p_i,q_i)$. Applying $g$
to the above dependence relation we obtain
\[ \zeta^{q_1} \la_1 F_1(\phi) + \ldots + \zeta^{q_m} \la_m F_m(\phi)= 0.\] 
We may therefore reduce to the case where all the $q_i$ are
congruent mod $d$. This implies by~(\ref{wtformula}) that the
degrees of the $F_i$ are congruent mod $5d$. We recall that $c_4$
and $c_6$ have degrees 20 and 30. It follows by~(\ref{defd}) that
there is a homogeneous covariant
\[ F = I_1 F_1 + \ldots + I_m F_m \] with $F(\phi)=0$ where each $I_i$
is a monomial in $c_4$ and $c_6$ and $I_i(\phi) \not=0$ for some $i$.
Then $F$ is divisible by the invariant $I$ constructed in
Lemma~\ref{lem:codim1}. Since we are assuming $F_1, \ldots, F_m$ are a
basis for the module of covariants it follows that $I$ divides $I_i$
and so $I_i(\phi) = 0$ for all $i$. This is the required
contradiction.

Conversely suppose $F_1, \ldots, F_m$ are covariants whose
specialisations at $\phi$ are linearly independent over $K$ whenever
$C_\phi$ is an elliptic normal quintic or rational nodal quintic. If
there is a relation \[ I_1 F_1 + \ldots + I_m F_m = 0 \] 
for some invariants $I_1, \ldots, I_m$ then these invariants vanish on
all non-singular models and so are identically zero by
Theorem~\ref{thm:inv}.  Thus $F_1, \ldots, F_m$ generate a free
submodule of rank $m$.  By (i) it remains to show that if
\[ I_1 F_1 + \ldots + I_m F_m = I F \] for some covariant $F$ and
invariants $I, I_1, \ldots, I_m$ then $I$ divides $I_i$ for all $i$.
We prove this by specialising to the genus one model in
Lemma~\ref{lem:van}.
\end{proof}

\begin{rem}
  (i) The result that $F_1(\phi), \ldots ,F_m(\phi)$ are linearly
  independent for $\phi$ non-singular could equally be proved using
  discrete covariants.  However this proof does not generalise to the
  case $C_\phi$ is
  a rational nodal quintic. \\
  (ii) We can remove the hypothesis $\charic K = 0$ from
  Theorem~\ref{thm:indep} (but still of course requiring $\charic K
  \not= 2,3,5$) by applying the Reynolds operator for $\SL_2(\Z/5\Z)$
  to the free $K[a^5,b^5]$-module of $\langle T \rangle$-equivariant
  maps $\Aff^2 \to Y^{H_5}$ that pass the test of Theorem~\ref{f1thm}.
  See also Remark~\ref{rem:T}.
\end{rem}

\section{Quintic covariants}
\label{sec:quintics}

We give further details of the covariants in the case $Y = S^5 W$.  We
already noted in the proof of Proposition~\ref{prop:quintics} that the
integer weight discrete covariants form a free $K[c_4,c_6]$-module of
rank 6 generated in degrees $10,10,20,20,30,30$. A basis for $Y^{H_5}$
is
\begin{align*}
\mathcal{F}_1 &= \textstyle\sum w_0^5 - 30 \prod w_0, &  
\mathcal{F}_2 &= 10 \textstyle\sum w_0^3 w_1 w_4,     &
\mathcal{F}_3 &= 10 \textstyle\sum w_0^3 w_2 w_3,     \\
\mathcal{G}_1 &= \textstyle\sum w_0^5 + 20 \prod w_0, & 
\mathcal{G}_2 &= 10 \textstyle\sum w_0 w_1^2 w_4^2,   &
\mathcal{G}_3 &= 10 \textstyle\sum w_0 w_2^2 w_3^2.
\end{align*}
In terms of this basis we have generators
\[ \begin{array}{r@{\,\,}c@{\,\,}l} \smallskip
F_{10} & = & (a^{10}-36 a^5 b^5-b^{10}) \FF_1 
+ 5 a^4 b (a^5-3 b^5) \FF_2 + 5 a b^4 (3 a^5+b^5) \FF_3, \\ 
F_{20} & = & (a^{20} + 114 a^{15} b^5 + 114 a^5 b^{15} - b^{20}) \FF_1 \\ 
& & \multicolumn{1}{c}{ 
-a^4 b (a^{15}+171 a^{10} b^5+247 a^5 b^{10}-57 b^{15}) \FF_2 }\\ 
\smallskip & & \multicolumn{1}{r}{ -a b^4 
(57 a^{15}+247 a^{10} b^5-171 a^5 b^{10}+b^{15}) \FF_3, } \\
F_{30} & = & D \big( {10} a^4 b^4 (9 a^{10} + 26 a^5 b^5 - 9 b^{10})\FF_1 \\ 
& & \multicolumn{1}{c}{ 
+a^3 (a^{15}+126 a^{10} b^5+117 a^5 b^{10}-12 b^{15}) \FF_2} \\ 
\medskip & & \multicolumn{1}{r}{
-b^3 (12 a^{15}+117 a^{10} b^5-126 a^5 b^{10}+b^{15}) \FF_3 \big), } \\ \smallskip
G_{10} & = & (a^{10}+14 a^5 b^5-b^{10}) \GG_1 + 5 a^3 b^2 (a^5+2 b^5) \GG_2 
+ 5 a^2 b^3 (2 a^5-b^5) \GG_3,  \\
G_{20} & = & (a^{20} - 136 a^{15} b^5 - 136 a^5 b^{15} - b^{20}) \GG_1
\\ & & \multicolumn{1}{c}{ -a^3 b^2 (7 a^{15} + 272 a^{10} b^5 - 221 a^5 b^{10} + 26 b^{15}) \GG_2}\\ \smallskip & & \multicolumn{1}{r}{ -a^2 b^3 (26 a^{15} + 221 a^{10} b^5 + 272 a^5 b^{10} - 7 b^{15}) \GG_3, } \\
G_{30} & = & 2  D^2 \big({10} a^3 b^3 \GG_1 + a (a^5-3 b^5) \GG_2 
-b (3 a^5+b^5) \GG_3 \big).
\end{array} 
\]

We recall from Section~\ref{sec:ex} that a discrete covariant is a
covariant if and only if the coefficient of $\sum w_0^5$ is divisible
by $a^5$. Therefore the $K[c_4,c_6]$-module of covariants for $Y=
S^5W$ has basis
\begin{equation}
\label{qbasis}
\begin{aligned}
S_{10} & = F_{10}-G_{10}, & S_{30} & = F_{30}-G_{30}, &
S_{40} & = c_6 F_{10} +c_4 F_{20}, \\
S_{20} & = F_{20}-G_{20}, & 
S'_{30} & = F_{30} + G_{30}, & S_{50} & = c_4^2 F_{10} + c_6 F_{20}. 
\end{aligned}
\end{equation} 

If we evaluate these covariants at a non-singular model $\phi$ then by
Theorem~\ref{thm:indep} we obtain a basis for the space of Heisenberg
invariant quintics.  The space of Heisenberg invariant quintics
relative to a {\em fixed} elliptic normal quintic was studied by Hulek
\cite{Hu}. We show that our basis obtained by specialising the
covariants picks out some of the quintic hypersurfaces to which Hulek
was able to attach a geometric meaning.

\begin{lem}
\label{lem:quintics}
Let $\phi \in \wedge^2 V \otimes W$ be non-singular and 
write $S_{10}, \ldots, S_{50}$ for the quintic forms
obtained by evaluating the covariants~(\ref{qbasis}) at $\phi$.
\begin{enumerate}
\item $S_{10}$ is (a scalar multiple of) the determinant of the
  Jacobian matrix of the quadrics defining $C_\phi$.
\item The Heisenberg invariant quintics vanishing on the tangent
  variety of $C_\phi$ are linear combinations of $S_{10}, S_{20},
  S'_{30}$.
\item The quintics $S_{10}, S_{20}, S_{30}$ are singular along
  $C_\phi$.
\item The quintics $S_{10}, S_{20}, S_{30}, S'_{30}, S_{40}$ vanish on
  $C_\phi$.
\end{enumerate}
\end{lem}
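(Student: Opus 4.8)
**The plan is to establish each of the four claims by working first over the Hesse family, where the quintics $S_{10},\dots,S_{50}$ are given by the explicit formulae of Section~\ref{sec:quintics}, and then invoking Lemma~\ref{lem:orbits} together with the covariance to deduce the statement for an arbitrary non-singular $\phi$.** Indeed, each of the properties (i)--(iv) is a geometric condition on the pair $(C_\phi,\{S_i(\phi)\})$ that is manifestly preserved by the action of $\GL(V)\times\GL(W)$: the Jacobian matrix of the Pfaffians transforms by the same representation as $S_{10}$, "vanishing on the tangent variety", "singular along $C_\phi$" and "vanishing on $C_\phi$" are intrinsic. Since every non-singular model is $\GL(V)\times\GL(W)$-equivalent to a Hesse model $u(a,b)$ by \cite[Proposition~4.1]{g1hess}, and all the $S_i$ are covariants, it suffices to check everything on $C_{u(a,b)}$ for generic $(a:b)$; specialisation-stability of the conclusions then handles every fibre.

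For (i): on the Hesse model the $4\times4$ Pfaffians are the five quadrics $p_i = a\, x_{i-1}x_{i+1} - \ldots$ (the cyclic quadrics cutting out $C_{u(a,b)}$), and one computes directly that the determinant of the $5\times5$ Jacobian matrix $(\partial p_i/\partial x_j)$ is a Heisenberg-invariant quintic of the correct weight; comparing with $S_{10}=F_{10}-G_{10}$ expressed in the basis $\FF_i,\GG_i$ (and using that the coefficient of $\sum w_0^5$ in $S_{10}$ is the expected scalar) identifies the two up to a constant. For (ii)--(iv) the key input is a parametrisation of $C_\phi$ — for the Hesse family one has an explicit elliptic parametrisation (or, as used elsewhere in the paper, one may pass through the rational nodal specialisation $u_1(0,1,1,1,1)$ with its rational parametrisation in $(s,t)$). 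Substituting the parametrisation (and its first-order jet, for the tangent variety and for the singularity-along-$C_\phi$ statements) into the explicit quintics $S_{10},S_{20},S_{30},S'_{30},S_{40}$ and checking the resulting identities in $K[a,b][s,t]$ is a finite computation. For (iii) one checks that all partial derivatives of $S_{10},S_{20},S_{30}$ vanish along the parametrised curve; for (ii) one checks that $S_{10},S_{20},S'_{30}$ (and no further independent combination) vanish when one substitutes a point-plus-tangent-direction, i.e. on $\Tan(C_\phi)$; and for (iv) one checks plain vanishing on the curve for the five listed forms while $S_{50}$ (being $c_4^2 F_{10}+c_6 F_{20}$) does not vanish generically, so that the vanishing locus is exactly as stated.

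**The main obstacle will be (ii)**, the identification of the space of Heisenberg-invariant quintics vanishing on the tangent variety $\Tan(C_\phi)$ as \emph{precisely} the span of $S_{10},S_{20},S'_{30}$, rather than merely showing these three do vanish on it. For the inclusion "$\supseteq$" one substitutes the $1$-jet of the parametrisation and verifies vanishing — routine. For the reverse inclusion one must show the space of such quintics is at most $3$-dimensional; here the natural route is a dimension count using Hulek's analysis \cite{Hu} of the $6$-dimensional space of Heisenberg-invariant quintics relative to a fixed elliptic normal quintic, together with the fact that a quintic vanishing on $\Tan(C_\phi)$ imposes enough conditions (the tangent variety of an elliptic normal quintic in $\PP^4$ is a threefold of degree $25$, so a quintic containing it is heavily constrained). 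One then matches the $3$ covariants $S_{10},S_{20},S'_{30}$ — which by Theorem~\ref{thm:indep} give \emph{linearly independent} values at any non-singular $\phi$ — against this at-most-$3$-dimensional space, forcing equality. The statements (i), (iii), (iv) are comparatively mechanical once the parametrisation is in hand, so essentially all the geometric content is concentrated in pinning down the tangent-variety claim.
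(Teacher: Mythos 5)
Your overall skeleton (reduce to the Hesse family via Lemma~\ref{lem:orbits} and covariance, then compute) matches the paper, and parts (i), (iii), (iv) are in essence recoverable — though note that the paper does not verify (iii) and (iv) by substituting a parametrisation (the smooth Hesse curve has no rational parametrisation, so your proposed ``finite computation in $K[a,b][s,t]$'' is only available for the nodal specialisation, and vanishing at that special fibre does not propagate to the smooth fibres: these are closed, not open, conditions on the base). Instead the paper exhibits the quintics explicitly as linear combinations of $\sum p_0^2w_0,\ \sum p_1p_4w_0,\ \sum p_2p_3w_0$ for (iii), and of $p_i\cdot(\text{cubics})$ for (iv), i.e.\ as ideal-membership identities over $K[a,b]$, which is the correct uniform computation.

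The genuine gap is in (ii). First, the statement only asserts one containment — any Heisenberg-invariant quintic vanishing on $\Tan C_\phi$ lies in the span of $S_{10},S_{20},S'_{30}$ — yet your argument for even this containment needs two inputs you do not secure: that $S_{10},S_{20},S'_{30}$ vanish on \emph{all} of $\Tan C_\phi$, and that the space of invariant quintics through $\Tan C_\phi$ has dimension at most $3$. The first is not ``routine'': a $1$-jet check (point plus tangent direction) only gives vanishing to order $2$ along each tangent line, not vanishing of the whole degree-$5$ restriction, and again the nodal parametrisation argues in the wrong direction. The second you attribute to a dimension count that is not pinned down (and your supporting claim is false: $\Tan C_\phi$ is a surface of degree $10$, not a threefold of degree $25$ — the degree-$25$ complete intersection is $\Tan C_\phi\cup F$, Theorem~\ref{thm:hul}(ii)); in the paper Hulek's dimension statement is only invoked afterwards, in Theorem~\ref{thm:hul}, precisely because the lemma is proved without it. The paper's actual argument is much lighter: parametrise the single tangent line at the flex $P=(0:a:b:-b:-a)$ by $\la P+Q$ with $Q$ explicit, and check that $S_{10},S_{20},S'_{30}$ restrict to zero there while $S_{30},S_{40},S_{50}$ restrict to polynomials in $\la$ of degrees $0,2,4$, hence linearly independent; so any invariant quintic vanishing on $\Tan C_\phi$ (in particular on that one line) has no $S_{30},S_{40},S_{50}$ component. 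That one-line computation delivers the stated containment with no appeal to Hulek and no need to prove the three covariants vanish on the whole tangent surface (that stronger fact is only deduced later, in Theorem~\ref{thm:hul}(iii)).
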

\begin{proof}
  For the proof we may take $\phi = u(a,b)$ a Hesse model. Let $p_0,
  \ldots, p_4$ be the equations for $C_\phi$, i.e.
  $p_i = ab  w_i^2 +b^2 w_{i+1} w_{i+4} - a^2 w_{i+2} w_{i+3}$. \\
  (i) We compute $S_{10} = 25 \det ( \partial p_i/\partial w_j)$. \\
  (ii) The tangent line to $C_\phi$ at $P= (0:a:b:-b:-a)$ also passes
  through
  \[Q = (5 a^3 b^3:0:-b (2 a^5 - b^5):-b (a^5 + 2 b^5):a (a^5 - 3
  b^5)).\] Evaluating the quintic forms at $\la P + Q$ we find that
  $S_{10}, S_{20}, S'_{30}$ vanish on the tangent line whereas
  $S_{30}, S_{40}, S_{50}$ give polynomials in $\la$ of degrees $0,2,4$. \\
  (iii) We may write these quintics as linear combinations of
  $\textstyle\sum p_0^2 w_0$, $\sum p_1 p_4 w_0$ and $\sum p_2 p_3 w_0$. \\
  (iv) We may write these quintics as linear combinations of
  $\textstyle\sum p_0 w_0^3$, $\sum p_0 w_0 w_1 w_4$, \\
  $\sum p_0 w_0 w_2 w_3$, $\sum p_0 (w_1^2 w_3 + w_2 w_4^2)$ and $\sum
  p_0 ( w_1 w_2^2 + w_3^2 w_4)$.
\end{proof}

\begin{thm}
\label{thm:hul}
Let $C = C_\phi$ be an elliptic normal quintic. Let $\Tan C$ and $\Sec
C$ be the tangent and secant varieties of $C$.  Let $F$ be the locus
of singular lines of the rank 3 quadrics containing $C$. Then
\begin{enumerate}
\item $\Sec C$ is the degree 5 hypersurface defined by $S_{10}$.
\item $\Tan C$ and $F$ are irreducible surfaces of degrees 10 and 15
  and their union is the complete intersection defined by $S_{10}$ and
  $S_{20}$.
\item The space of Heisenberg invariant quintics containing $\Tan C$
  has basis $S_{10}, S_{20}, S_{30}'$.
\item The space of Heisenberg invariant quintics containing $F$,
  equivalently that are singular along $C$, has basis $S_{10}$,
  $S_{20}$, $S_{30}$.
\item The space of Heisenberg invariant quintics containing $C$ has
  basis $S_{10}$, $S_{20}$, $S_{30}$, $S_{30}'$, $S_{40}$.
\end{enumerate}
\end{thm}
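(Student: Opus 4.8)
The plan is to combine the explicit facts about the covariants $S_{10},\ldots,S_{40}$ established in Lemma~\ref{lem:quintics} with classical facts about the secant and tangent varieties of an elliptic normal quintic and with Hulek's analysis of the space of Heisenberg invariant quintics. Since everything is $\GL(V)\times\GL(W)$-equivariant and, by Lemma~\ref{lem:orbits}, the elliptic normal quintics with a given $j$-invariant form a single orbit, it suffices to verify each assertion for a single convenient model in each $j$-invariant; by \cite[Proposition~4.1]{g1hess} we may take $\phi=u(a,b)$ a Hesse model, and the dimension counts we need (as opposed to membership) depend only on the abstract curve, so for those we may freely use the universal family over $X(5)$. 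Throughout write $p_0,\ldots,p_4$ for the defining quadrics of $C$ as in the proof of Lemma~\ref{lem:quintics}.

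First I would dispose of the classical geometry. The secant variety $\Sec C$ of a non-degenerate smooth curve $C\subset\PP^4$ of degree $5$ and genus $1$ is a hypersurface; its degree is computed by the standard double-point formula (or directly from the fact that a general hyperplane section of $\Sec C$ is the secant variety of five general points of a plane quintic, equivalently a rational normal... here one checks it is a plane quintic) to be $5$. By Lemma~\ref{lem:quintics}(i)--(ii), $S_{10}$ vanishes on $\Sec C$ (it vanishes on $C$ and is singular along $C$, hence vanishes to order $\ge 2$ on $C$ and so on the chords), and being a quintic it must cut out $\Sec C$ exactly; this gives (i). For $\Tan C$ and the locus $F$ of singular lines of the rank-$3$ quadrics through $C$: the quadrics through $C$ form the net $\PP(V)$, its discriminant curve (rank $\le 3$ locus) is a plane quintic in $\PP(V)$, and associating to each rank-$3$ quadric its singular line gives a map from that quintic to the Grassmannian; $F$ is the union of these lines. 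A Chern class / incidence computation on the Grassmannian (or Hulek's explicit description) shows $\deg F=15$ and that $F$ is an irreducible surface, while $\Tan C$ is classically irreducible of degree $2\cdot\deg C=10$. Since $S_{10}$ is singular along $C$, both $\Tan C\subset\Sec C$ and $F$ lie on $S_{10}=0$, and $S_{20}$ by Lemma~\ref{lem:quintics}(iii) is singular along $C$, hence also contains the chords infinitesimally near $C$, i.e. contains $\Tan C$, and by Lemma~\ref{lem:quintics} (the explicit forms in (iii)) one checks $S_{20}$ contains $F$ as well; then $S_{10}\cap S_{20}$ is a surface of degree $25=10+15$ containing $\Tan C\cup F$, and since the latter has the same degree and $S_{10},S_{20}$ are irreducible of coprime behaviour one concludes equality of the two as the complete intersection. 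This is (ii).

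Next the three spaces of Heisenberg invariant quintics. By Hulek \cite{Hu} (and the discussion preceding Theorem~\ref{thm:hul}) the full space of Heisenberg invariant quintics is $6$-dimensional, matching $\dim(S^5W)^{H_5}=6$; the subspaces of quintics containing $\Tan C$, singular along $C$, and containing $C$ have dimensions $3,3,5$ respectively — these are exactly Hulek's counts, which I would cite, or rederive from the exact sequences $0\to I_{\Tan C}(5)\to\mathcal O_{\PP^4}(5)\to\mathcal O_{\Tan C}(5)\to 0$ etc.\ together with Riemann--Roch on the relevant surfaces. Now Lemma~\ref{lem:quintics}(ii) shows $S_{10},S_{20},S'_{30}$ all contain $\Tan C$, and by Theorem~\ref{thm:indep} applied to the non-singular model $\phi$ these three specialisations are linearly independent (they are part of a free basis of the covariant module); three independent elements of a $3$-dimensional space form a basis, giving (iii). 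Identically, Lemma~\ref{lem:quintics}(iii) gives $S_{10},S_{20},S_{30}$ singular along $C$ — and "singular along $C$'' is equivalent to "containing $F$'', because a quintic singular along $C$ must contain every tangent line and every chord, in particular the singular lines of quadrics through $C$, while conversely Hulek shows the quintics through $F$ are exactly those singular along $C$ — and Theorem~\ref{thm:indep} again gives independence, yielding (iv). Finally Lemma~\ref{lem:quintics}(iv) gives five covariants $S_{10},S_{20},S_{30},S'_{30},S_{40}$ vanishing on $C$, independent by Theorem~\ref{thm:indep}, hence a basis of the $5$-dimensional space of Heisenberg invariant quintics through $C$; this is (v).

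The main obstacle is part (ii): proving that $F$ is an irreducible surface of degree exactly $15$ and that $S_{20}$ vanishes on it with the right multiplicity so that $S_{10}\cap S_{20}=\Tan C\cup F$ scheme-theoretically (rather than merely set-theoretically, or with an extra component). Establishing $\deg F=15$ cleanly requires either the intersection-theoretic computation on the Grassmannian of lines in $\PP^4$ with the discriminant quintic in $\PP(V)$, or an appeal to Hulek's explicit equations; and ruling out that $S_{10}\cap S_{20}$ picks up a spurious component needs a local check along $C$ (comparing the multiplicities of $\Tan C$ and $F$ in the intersection cycle, both of which are $1$, against the total degree $25$). Everything else is either a citation (the dimensions of Hulek's linear systems, the classical degree of $\Tan C$), a direct appeal to Theorem~\ref{thm:indep} for independence of the specialised covariants, or already contained in Lemma~\ref{lem:quintics}. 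I would therefore organise the write-up as: (a) recall Hulek's dimension counts; (b) prove the degrees of $\Sec C$, $\Tan C$, $F$; (c) deduce (i) and (ii) from Lemma~\ref{lem:quintics}(i)--(iii) plus the degree bookkeeping; (d) deduce (iii), (iv), (v) from Lemma~\ref{lem:quintics}(ii)--(iv) and Theorem~\ref{thm:indep}.
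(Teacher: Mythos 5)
Your overall strategy --- combine Lemma~\ref{lem:quintics} with Hulek's geometric results and the independence of the specialised covariants from Theorem~\ref{thm:indep} --- is exactly the paper's (its proof is the one-line citation of Lemma~\ref{lem:quintics} and \cite{Hu}). But two of your bridging steps are genuinely wrong. First, you repeatedly use the implication ``singular along $C$ $\Rightarrow$ contains the chords (resp.\ tangent lines) of $C$'': since $C$ is smooth, a quintic singular along $C$ lies locally in $I_C^2$, and its restriction to a chord (which meets $C$ in exactly two points, there being no trisecants) vanishes to order at least $2+2=4$, and to a tangent line to order at least $4$ as well --- one short of the $5$ needed to force identical vanishing. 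And the implication really does fail here: $S_{20}$ and $S_{30}$ are singular along $C$ by Lemma~\ref{lem:quintics}(iii), yet $S_{20}$ cannot contain $\Sec C$ (otherwise it would be proportional to $S_{10}$, contradicting Theorem~\ref{thm:indep}), and $S_{30}$ does not contain $\Tan C$ (otherwise parts (iii) and (iv) of the theorem would produce four independent quintics in a three-dimensional space). So your derivation of (i) (``$S_{10}$ vanishes on the chords'') and of ``$S_{20}$ contains $\Tan C$'' in (ii) do not stand; the paper gets these facts from Hulek, who shows that $\Sec C$ is the quintic cut out by the Jacobian determinant of the defining quadrics (Lemma~\ref{lem:quintics}(i) then identifies this with $S_{10}$) and who determines the relevant linear systems and the degrees of $\Tan C$ and $F$.

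Second, you read Lemma~\ref{lem:quintics}(ii) backwards: it asserts only that every Heisenberg-invariant quintic vanishing on $\Tan C$ is a linear combination of $S_{10},S_{20},S'_{30}$ (its proof checks behaviour on a single tangent line), not that these three vanish on $\Tan C$. So ``three independent elements of a three-dimensional space'' is not yet available for (iii). The correct order of argument, using only your stated ingredients, is: Hulek's count gives $\dim = 3$ for the Heisenberg-invariant quintics through $\Tan C$, Lemma~\ref{lem:quintics}(ii) gives the inclusion of this space in the span of $S_{10},S_{20},S'_{30}$, and Theorem~\ref{thm:indep} shows that span is three-dimensional, whence equality --- and only at that point does one learn that $S_{20}$ contains $\Tan C$, which you need in (ii). Your treatment of (iv) and (v) is sound, since there Lemma~\ref{lem:quintics}(iii),(iv) does supply the membership direction; and your concern about $\deg F = 15$ and the scheme-theoretic equality in (ii) is correctly resolved by citing Hulek, which is precisely what the paper does.
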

\begin{proof}
  This follows by Lemma~\ref{lem:quintics} and work of Hulek
  \cite{Hu}.
\end{proof}

\section{The covering map}
\label{sec:covmap}

We call the covariants $\wedge^2 V \otimes W \to S^d W$ 
{\em covariants of order $d$}. The action of the Heisenberg group shows
that the order must be a multiple of $5$.  By Theorem~\ref{thm:indep}
and \cite[Lemma 4.4]{invenqI}, the $K[c_4,c_6]$-modules of covariants
of orders $5, 10, 15$ have ranks $6, 41, 156$.  Fortunately we do not
need to classify all these covariants since most of them vanish on
$C_\phi$ and therefore are of no use for describing the covering
map.

\begin{lem}
\label{lemma1}
Let $C \subset \PP^{n-1}$ be an elliptic normal curve. 
Then the space of Heisenberg invariant polynomials of degree $nd$,
quotiented out by the subspace vanishing on $C$, has dimension $d$.
\end{lem}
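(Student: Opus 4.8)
The plan is to reduce the statement to a cohomology computation on $C$, and then to compute Heisenberg‑invariants using that $C$ covers its Jacobian.

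First I would set up the reduction. Let $W$ be the space of linear forms on $\PP^{n-1}$, so that $K[\PP^{n-1}] = \bigoplus_m S^m W$, and let $I(C) \subset K[\PP^{n-1}]$ be the homogeneous ideal of $C$. An elliptic normal curve is projectively normal (for $n=5$ this also follows from the Buchsbaum--Eisenbud structure theorem already invoked above), so the restriction map $S^{nd}W \to H^0(C, \mathcal{O}_C(nd))$ is surjective with kernel $I(C)_{nd}$. The Heisenberg group $H_n \subset \SL_n$ acts on $W$, hence on each $S^m W$, and stabilises $C$, hence $I(C)$. Since $|H_n| = n^3$ is prime to $\charic K$, taking $H_n$-invariants is exact, so the space in the statement is isomorphic to $H^0(C, \mathcal{O}_C(nd))^{H_n}$, and it suffices to show that this has dimension $d$.

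Next I would analyse the $H_n$-action on $H^0(C,\mathcal{O}_C(nd))$. The centre of $H_n$ is generated by $\zeta=[\sigma,\tau]$, which acts on $W$ as the scalar $\zeta_n$; hence it acts on $S^{nd}W$, and on its quotient $H^0(C,\mathcal{O}_C(nd))$, as $\zeta_n^{nd}=1$, i.e.\ trivially. So the action factors through $E[n] \isom H_n/\langle\zeta\rangle$, where $E$ is the Jacobian of $C$, and by the description of the Heisenberg group recalled in the introduction this $E[n]$ acts on $C$ by translation, with the induced action on $\mathcal{O}_C(nd)=\mathcal{O}_C(1)^{\otimes nd}$ covering it. Since $\charic K \nmid n$, the covering map $\pi : C \to E$ of degree $n^2$ is precisely the quotient of $C$ by this (free, hence étale) $E[n]$-action. Étale descent then shows that $\mathcal{O}_C(nd)$, with its $E[n]$-linearisation, descends to a line bundle $\mathcal{N}$ on $E$ with $\pi^*\mathcal{N} \isom \mathcal{O}_C(nd)$ and $\deg\mathcal{N} = \deg\mathcal{O}_C(nd)/\deg\pi = n^2 d / n^2 = d$. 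Finally, descent of sections (equivalently, the projection formula together with $(\pi_*\mathcal{O}_C)^{E[n]} = \mathcal{O}_E$) gives $H^0(C,\mathcal{O}_C(nd))^{H_n} = H^0(C,\pi^*\mathcal{N})^{E[n]} = H^0(E,\mathcal{N})$, which has dimension $d$ by Riemann--Roch since $\mathcal{N}$ has positive degree $d$ on the elliptic curve $E$.

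The step I expect to need the most care is checking that the residual $E[n]$-action on $\mathcal{O}_C(nd)$ is genuinely a linearisation over the translation action of $E[n]$ on $C$, so that étale descent applies and produces $\mathcal{N}$; citing projective normality of $C$ in the relevant degree is a minor point by comparison. Reassuringly, the final count is robust: any two $E[n]$-linearisations of $\mathcal{O}_C(nd)$ over the translation action differ by a character of $E[n]$, which twists $\mathcal{N}$ only by a line bundle of degree $0$ and hence leaves $h^0(E,\mathcal{N})=d$ unchanged.
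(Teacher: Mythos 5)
Your proof is correct and takes essentially the same route as the paper: both pass through the covering map $\pi : C \to E$, identify the Heisenberg-invariant degree-$nd$ forms on $C$ with the sections of a degree~$d$ line bundle on $E$ (the paper by pulling back a basis of the Riemann--Roch space of $d\cdot 0_E$ via $\pi^*(d\cdot 0_E)\sim nd\,H$, you by descending $\mathcal{O}_C(nd)$ along the \'etale $E[n]$-quotient), and use the Reynolds operator for $H_n$ (your exactness of invariants) to pass between invariant polynomials and invariant forms modulo $I(C)$. Your descent formulation simply makes explicit the projective normality and the ``every invariant section is a pullback'' step that the paper's terser argument leaves implicit, and avoids pinning down the descended bundle by noting that its degree alone determines $h^0$.
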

\begin{proof}
  Let $\pi: C \to E$ be the covering map of degree $n^2$ from $C$ to
  its Jacobian $E$. Then $\pi^*(d . 0_E) \sim nd H$ where $H$ is the
  hyperplane section for $C$. So if $f_1, \ldots, f_d$ is a basis for
  the Riemann-Roch space $\CL(d. 0_E)$ then $\pi^* f_1, \ldots, \pi^*
  f_d$ are basis for the space of forms of degree $nd$ in $K[x_0,
  \ldots, x_{n-1}]/I(C)$ that are invariant under the action of
  $E[n]$. Applying the Reynold's operator for the Heisenberg group
  shows that every such form has a representative in $K[x_0, \ldots,
  x_{n-1}]$ that is itself Heisenberg invariant.
\end{proof}
 
\begin{lem}
\label{lemma2}
Let $C \subset \PP^{n-1}$ be either an elliptic normal curve or a
rational nodal curve, and let $P \in C$ be a smooth point.  Suppose
$Z,X,Y$ are homogeneous polynomials in $K[x_0, \ldots,x_{n-1}]$ of
degrees $n, 2n, 3n$ with $\ord_P(Z) = 1$, $\ord_P(X) = 0$, $\ord_P(Y)
= 0$. Then for each $d \ge 1$ the forms
\[ \{ X^i Y^j Z^k : i,k \ge 0, j \in \{0,1\}, 2i + 3j + k = d \} \]
are linearly independent in the co-ordinate ring $K[x_0, \ldots,x_{n-1}]/I(C)$.
\end{lem}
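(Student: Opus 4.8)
The plan is to restrict a hypothetical linear relation to $C$ and derive a contradiction from orders of vanishing at $P$. First note that each monomial $X^iY^jZ^k$ with $2i+3j+k=d$ has $x$-degree $n(2i+3j+k)=nd$, so a relation among them lives in the degree-$nd$ graded piece $R_{nd}$ of the homogeneous coordinate ring $R=K[x_0,\ldots,x_{n-1}]/I(C)$. Since $C$ is an elliptic normal curve or a rational nodal curve it is integral, so $R$ is a domain and $C$ has a function field $K(C)$; moreover the hypotheses $\ord_P(Z)=1$ and $\ord_P(X)=\ord_P(Y)=0$ force $Z,X,Y$ to restrict to nonzero sections on $C$.

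Next I would divide through by $Z^d$. Set $U=X/Z^2$ and $W=Y/Z^3$, which are rational functions on $C$ since in each case numerator and denominator are forms of the same $x$-degree, and which are not identically zero on $C$ because $\ord_P(X),\ord_P(Y),\ord_P(Z)$ are finite. Using $k=d-2i-3j$ we have $X^iY^jZ^k/Z^d=U^iW^j$, so the $K$-linear map $R_{nd}\to K(C)$ sending the class of $F$ to the rational function $F/Z^d$ carries the monomials of the lemma to $\{\,U^iW^j:i\ge 0,\ j\in\{0,1\},\ 2i+3j\le d\,\}$. This map is injective, since $F/Z^d$ vanishing on $C$ means $F\in I(C)$. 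Hence it suffices to prove that these elements of $K(C)$ are linearly independent over $K$.

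Now I would use that $P$ is a smooth point of $C$, so $\ord_P$ is a discrete valuation on $K(C)$. From $\ord_P(Z)=1$, $\ord_P(X)=\ord_P(Y)=0$ we get $\ord_P(U)=-2$ and $\ord_P(W)=-3$, hence $\ord_P(U^iW^j)=-(2i+3j)$. The crucial elementary point is that with $j$ restricted to $\{0,1\}$ the integer $2i+3j$ determines the pair $(i,j)$: from $2i+3j=2i'+3j'$ we get $2(i-i')=3(j'-j)$ with $j'-j\in\{-1,0,1\}$, and since the left side is even while $\pm 3$ is odd this forces $j=j'$ and then $i=i'$. Consequently, in any relation $\sum c_{ij}U^iW^j=0$ with not all $c_{ij}$ zero, pick $(i_0,j_0)$ making $m:=2i_0+3j_0$ maximal among the nonzero terms; that term has $\ord_P=-m$ while every other nonzero term has $\ord_P>-m$, so by the ultrametric inequality $\ord_P\bigl(\sum c_{ij}U^iW^j\bigr)=-m$, which is finite, contradicting that the sum is $0$. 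Therefore all coefficients vanish, which is the assertion.

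The only genuine content beyond bookkeeping is the separation of the monomials by the value $2i+3j$, which works precisely because $j$ is confined to $\{0,1\}$; neither the genus nor the precise type of $C$ enters, only that $C$ is an integral curve with a smooth point $P$, which is why the elliptic normal and rational nodal cases are handled uniformly. I do not expect a real obstacle; the one point to state carefully is that $\ord_P$ of a form of degree $e$ is well defined, e.g.\ as $\ord_P(G/L^e)$ for a linear form $L$ with $\ord_P(L)=0$, so that the computations $\ord_P(U)=-2$ and $\ord_P(W)=-3$ are legitimate.
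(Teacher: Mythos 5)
Your proof is correct and is essentially the paper's argument in expanded form: the paper simply notes that $\ord_P(X^iY^jZ^k)=k$ and that the listed forms have pairwise distinct $k$, which is the same key fact as your observation that $2i+3j$ (equivalently $k=d-2i-3j$) determines $(i,j)$ because $j\in\{0,1\}$. Your passage to $U=X/Z^2$, $W=Y/Z^3$ and the ultrametric inequality is just a careful way of making that one-line valuation argument precise.
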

\begin{proof} 
  This is clear since $\ord_P(X^i Y^j Z^k) = k$ and the forms listed
  have distinct values of $k$.
\end{proof}

\begin{lem}
\label{lemma3}
There are covariants $Z,X,Y$ of orders $5,10,15$ and degrees $50,
110$, $165$ such that whenever $C_\phi$ is an elliptic normal quintic
or rational nodal quintic there is a smooth point $P \in C_\phi$ such
that the evaluations of $Z,X,Y$ at $\phi$ satisfy $\ord_P(Z)=1$,
$\ord_P(X)=0$, $\ord_P(Y) = 0$.
\end{lem}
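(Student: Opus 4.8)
The plan is to exhibit the three covariants explicitly and to verify the order conditions by reducing, via equivariance of covariants together with the transitivity in Lemma~\ref{lem:orbits}, to a single representative in each orbit: one elliptic normal quintic of each $j$-invariant, and one rational nodal quintic. For $Z$ I would take the order-$5$ covariant $S_{50}$ of degree $50$ from Section~\ref{sec:quintics}, and show it restricts to a non-zero form on $C_\phi$. By Lemma~\ref{lem:quintics}(iv) the basis covariants $S_{10},S_{20},S_{30},S_{30}',S_{40}$ all vanish on $C_\phi$; since by Theorem~\ref{thm:indep} all six specialise to a basis of $(S^5W)^{H_5}$, whereas for $C_\phi$ elliptic normal the Heisenberg-invariant quintics vanishing on $C_\phi$ span only a $5$-dimensional subspace by Lemma~\ref{lemma1}, the restriction $S_{50}|_{C_\phi}$ is non-zero; for the rational nodal $C_\phi$ the same non-vanishing follows from a direct computation on the parametrisation mentioned below. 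When $C_\phi$ is an elliptic normal quintic, $Z|_{C_\phi}$ is thus a non-zero Heisenberg-invariant section of $\mathcal{O}_{C_\phi}(5H)$; because $\pi$ is \'etale of degree $25$ and any Heisenberg-invariant effective divisor of degree $25$ on $C_\phi$ is a fibre $\pi^{-1}(Q)$, this section is $\pi^*$ of the section of the degree-one line bundle $\mathcal{O}_E(Q)$, hence has $25$ distinct simple zeros, all at smooth points. For the rational nodal model $\phi=u_1(0,1,1,1,1)$ one instead substitutes the parametrisation $(s^5-t^5:st^4:s^2t^3:-s^3t^2:-s^4t)$ and checks directly that $Z$ restricts to a binary form of degree $25$ with a simple zero away from the two preimages of the node.

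For $X$ and $Y$ the same idea applies: Heisenberg-invariant forms of orders $10$ and $15$ restrict to sections of $\mathcal{O}_{C_\phi}(10H)$ and $\mathcal{O}_{C_\phi}(15H)$, which are pulled back from degree-$2$ and degree-$3$ line bundles on $E$, and Riemann-Roch provides sections not vanishing at a prescribed point of $\pi^{-1}(Q)$; by Theorem~\ref{thm:indep} such sections are restrictions of covariants. What then remains is to arrange covariants in the exact prescribed degrees $110$ and $165$, and that one and the same pair $(X,Y)$ works for every $\phi$. Since the modules of covariants of orders $10$ and $15$ are graded free $K[c_4,c_6]$-modules of ranks $41$ and $156$ (by Theorem~\ref{thm:indep} and \cite[Lemma 4.4]{invenqI}) whose generators lie in low degrees, the spaces of order-$10$ covariants of degree $110$ and of order-$15$ covariants of degree $165$ are large enough that a suitable member avoids vanishing on the $25$-point locus $\{Z=0\}\cap C_\phi$; I would write down such $X$ and $Y$ explicitly and verify the three order conditions simultaneously on the universal family $\phi_\lambda=u_1(\lambda,1,1,1,1)$ over $X_1(5)$, using a rational point of $C_{\phi_\lambda}$ supplied by the $X_1(5)$-structure as the candidate point $P$ and computing in $K[\lambda]$. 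By the remark following~(\ref{u1def}) and \cite{jems} this family realises every $j$-invariant as $\lambda$ varies and degenerates to the rational nodal quintic at $\lambda=0$, so equivariance and Lemma~\ref{lem:orbits} propagate the conclusion to all models.

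The step I expect to be the main obstacle is exactly this last uniformity: pinning $X$ and $Y$ down in the prescribed degrees so that non-vanishing at $P$ holds identically in $\lambda$, together with the rational nodal fibre, and covering the finitely many $j$-invariants not met by a Zariski-dense subfamily of $\{\phi_\lambda\}$ --- notably $j=0$ and $j=1728$, where $c_4(\phi)$ or $c_6(\phi)$ vanishes --- which requires either a direct check on the Hesse models $u(a,b)$ realising them or a specialisation argument from a nearby fibre. Conceptually the cleanest route, although it reverses the logical order of Section~\ref{sec:covmap}, is to take for $X,Y,Z$ the covariants that encode the covering map, so that $Y^2=X^3-27c_4XZ^4-54c_6Z^6$ on $C_\phi$ and $(X/Z^2:Y/Z^3:1)$ is the covering map to the Weierstrass model of $E$: then $\operatorname{div}(Z|_{C_\phi})=\pi^*(0_E)$ while the $x$- and $y$-coordinates have their only poles at $0_E$, so $\operatorname{div}(X|_{C_\phi})$ and $\operatorname{div}(Y|_{C_\phi})$ avoid $\pi^{-1}(0_E)$ and the three order conditions at any $P\in\pi^{-1}(0_E)$ are immediate. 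To keep the present lemma self-contained one avoids this circularity by the explicit verification described above.
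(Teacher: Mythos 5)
Your treatment of $Z$ is essentially sound and even a little slicker than the paper's: taking $Z=S_{50}$, using Lemma~\ref{lem:quintics}(iv), Theorem~\ref{thm:indep} and Lemma~\ref{lemma1} to see it does not vanish identically on $C_\phi$, and then using Heisenberg-invariance of the restricted quintic (an effective divisor of degree $25$ invariant under the free translation action of $E[5]$ must be a single reduced fibre of $\pi$) in place of the paper's flex-plus-Bezout argument. But the heart of the lemma is an existence claim for $X$ and $Y$, and here your proposal has a genuine gap: you never produce them. The paper's proof is precisely an explicit construction ($Q_6$, $H$, $P_{12}$, $P_{22}$, $M_{30}$, $N_{30}$, $T_{23}$, $T_{28}$, contracted to give $Z$, $X$, $Y$ of degrees $50,110,165$) followed by direct evaluation: on the Hesse family the values at the flex come out as $2^{18}3^{10}D^{10}$ and $-2^{27}3^{15}D^{15}$, so they are nonzero for \emph{every} non-singular model, and on the nodal curve the restrictions are computed as explicit binary forms~(\ref{ZXYrnc}). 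Your substitute argument --- that the modules of order-$10$ and order-$15$ covariants are ``large enough'' that some member of degree $110$, resp.\ $165$, avoids the $25$ zeros of $Z$ --- only gives, for each fixed $\phi$, a codimension-$\le 1$ condition; it does not produce one pair $(X,Y)$ working simultaneously for the whole (infinite) family of elliptic normal and rational nodal quintics. For that one needs the evaluation at the flex, as a polynomial in $(a,b)$, to have zero locus contained in $\{D=0\}$, i.e.\ to be a constant times a power of $D$; a generic covariant of the right degree fails this, and nothing in your dimension count rules that out. Deferring this to ``I would write down such $X$ and $Y$ explicitly and verify'' is deferring the actual content of the lemma.

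A secondary problem is your verification scheme on the family $\phi_\lambda=u_1(\lambda,1,1,1,1)$: the candidate point must be a simple zero of $Z$ on $C_{\phi_\lambda}$ (that is what $\ord_P(Z)=1$ forces), and the marked rational point supplied by the $X_1(5)$-structure need not lie on $\{Z=0\}$; the correct points are the zeros of $Z$ (the flexes, for the paper's choice of $Z$), so the computation has to be organised around them, as in~(\ref{ZXYenc}). The paper instead verifies the elliptic case on the Hesse family $u(a,b)$ at the flex $(0:a:b:-b:-a)$ and the nodal case at $u_1(0,1,1,1,1)$ with $P=(0:1:1:-1:-1)$, and these two checks, via Lemma~\ref{lem:orbits} and equivariance, cover all models. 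Your concluding remark correctly identifies the circularity in defining $X,Y,Z$ via the covering map, but the ``self-contained'' verification you promise in its place is exactly the explicit construction and computation that is missing.
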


\begin{proof}
  We start with the covariants $U,H : \wedge^2 V \otimes W \to
  \wedge^2 V \otimes W$ and $Q_6 : \wedge^2 V \otimes W \to S^2 V
  \otimes W$ where $U$ is the identity map and (on the Hesse family)
\begin{align*}
H & = 
-(\partial D / \partial b)  \textstyle\sum(v_1 \wedge v_4) w_0 + 
(\partial D / \partial a) \sum(v_2 \wedge v_3) w_0\\
Q_6 & = \textstyle\sum  
(5 a^3 b^3 v_0^2 + a(a^5-3b^5) v_1 v_4 - b(3 a^5+b^5) v_2 v_3) w_0. 
\end{align*} 
There are covariants $P_2, P_{12}, P_{22} : \wedge^2 V \otimes W 
\to V^* \otimes S^2 W$ where $P_2$ is the Pfaffian map~(\ref{def:P2}) and 
$P_{12}$, $P_{22}$ satisfy
\[  P_2 ( \la U + \mu H) = \la^2 P_2 + 2 \la \mu P_{12} + \mu^2 P_{22}. \]
We define covariants
$M_{30} : \wedge^2 V \otimes W \to S^5V$ and 
$N_{30} : \wedge^2 V \otimes W \to S^5V^*$
where $M_{30}=\det  Q_6 $ and $N_{30}$ is the coefficient of 
$t$ in $\det(P_2+t P_{22})$. We also define
$T_{23}$ and $T_{28}$ taking values in $V \otimes S^3W$ by
\begin{align*}
(\otimes^2 V \otimes W ) \times (V^* \otimes S^2 W) & \to V \otimes S^3W \\
(U ,P_{22}) & \mapsto T_{23} \\
(Q_6, P_{22}) & \mapsto T_{28}. 
\end{align*}
We then put
\begin{align*}
Z & = (1/2) Q_6(P_{22},P_{22}) \\
X & = (3^3/2^6) M_{30}(P_{12},P_{12},P_{12},P_{22},P_{22}) \\
Y & = (3^3/2^8) N_{30}(T_{23},T_{28},T_{28},T_{28},T_{28}).
\end{align*}
As required these are covariants of orders $5,10,15$ and
degrees $50, 110, 165$.

Suppose $C_\phi$ is a rational nodal quintic. By
Lemma~\ref{lem:orbits} we may assume that $\phi$ is as given in
Section~\ref{sec:denom}, i.e.  $\phi=u_1(0,1,1,1,1)$. Then $C_\phi$ is
parametrised by
\[ (x_0: \ldots :x_4) = (s^5-t^5:st^4:s^2t^3:-s^3t^2:-s^4t) \]
Evaluating $Z,X,Y$ at $\phi$ we find
\begin{equation}
\label{ZXYrnc}
\begin{aligned}
Z(s^5-t^5,st^4,s^2t^3,-s^3t^2,-s^4t) 
& =- 2^8 3^4 s^{10}t^{10}(s^5-t^5) \\
X(s^5-t^5,st^4,s^2t^3,-s^3t^2,-s^4t) 
& = 2^{16} 3^9 s^{20}t^{20}(s^{10}+10s^5t^5+t^{10}) \\
Y(s^5-t^5,st^4,s^2t^3,-s^3t^2,-s^4t) 
& = 2^{26} 3^{15} s^{35}t^{35}(s^5+t^5). 
\end{aligned}
\end{equation}
The conclusions of the lemma are satisfied for
$P = (0:1:1:-1:-1)$.

Now suppose $C_\phi$ is an elliptic normal quintic. Then by 
\cite[Proposition 4.1]{g1hess} 
we may assume that $\phi=u(a,b)$ is a Hesse model. 
There is a flex (i.e. hyperosculating point)
of $C_{\phi}$ at $P = (0:a:b:-b:-a)$. Evaluating $Z,X,Y$ at $\phi$ we find
\begin{equation}
\label{ZXYenc}
\begin{aligned}
Z( 0,a,b,-b,-a) & =  0 \\
X( 0,a,b,-b,-a) & =  2^{18} 3^{10} D^{10} \\
Y( 0,a,b,-b,-a) & =  -2^{27} 3^{15} D^{15} 
\end{aligned}
\end{equation}
where $D=ab(a^{10}-11a^5b^5-b^{10})$. Since $\Delta = D^5$
it is clear that $X$ and $Y$ do not vanish at $P$.  Now $C_\phi
\subset \PP^4$ is a curve of degree $5$ meeting the degree $5$
hypersurface defined by $Z$ at the $25$ flexes of $C_\phi$. So by
Bezout's theorem either $\ord_P(Z)=1$ or $Z$ vanishes identically on
$C_\phi$. To rule out the latter we write $Z$ in terms of the
basis~(\ref{qbasis}). 
Explicitly we find
\[ Z = (39/10) c_4^2 S_{10}+4 c_6 S_{20}-54 c_4 S_{30}-(198/5) c_4 S'_{30}+12 S_{50}.\]
By Theorem~\ref{thm:indep} the specialisations of 
$S_{10}, \ldots, S_{50}$ at $\phi$ are linearly independent. It follows
by Theorem~\ref{thm:hul}(v) that $Z$ does not vanish identically on $C_\phi$.
\end{proof}

\begin{lem}
\label{H90}
Let $\LL/\KK$ be a finite Galois extension with Galois group $\Gamma$.
Let $\VV$ be a finite dimensional vector space over $\LL$.  Suppose
there is an action of $\Gamma$ 
on $\VV$ satisfying $\gamma(v+w) = \gamma(v)+ \gamma(w)$ and
$\gamma(\lambda v) = \gamma(\lambda) \gamma(v)$ for all $\gamma \in
\Gamma$, $\lambda \in \LL$ and $v,w \in \VV$. Then $\dim_\KK
\VV^\Gamma = \dim_\LL \VV$.
\end{lem}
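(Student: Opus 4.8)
The plan is to construct the canonical ``Galois descent'' map
$\mu : \LL \otimes_\KK \VV^\Gamma \to \VV$, $\lambda \otimes v \mapsto \lambda v$, show it is an $\LL$-linear isomorphism, and then read off the dimension count, since $\dim_\LL(\LL \otimes_\KK \VV^\Gamma) = \dim_\KK \VV^\Gamma$. First I would note that $\VV^\Gamma$ is a $\KK$-subspace of $\VV$: it is closed under addition because each $\gamma$ is additive, and closed under scaling by $\KK$ because $\KK = \LL^\Gamma$ (fundamental theorem of Galois theory) and the $\gamma$ are $\LL$-semilinear. Hence $\mu$ is well defined and $\LL$-linear, and it remains to prove it is bijective.

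For injectivity it suffices to show that elements $v_1, \ldots, v_m \in \VV^\Gamma$ that are linearly independent over $\KK$ stay linearly independent over $\LL$ inside $\VV$. I would use the standard minimal-counterexample argument: among all nontrivial $\LL$-relations $\sum_{i=1}^m \lambda_i v_i = 0$ choose one with shortest support, normalised so that $\lambda_1 = 1$; applying any $\gamma \in \Gamma$ and using $\gamma(v_i) = v_i$ gives $\sum_i \gamma(\lambda_i) v_i = 0$, and subtracting the original relation yields $\sum_{i \ge 2}(\lambda_i - \gamma(\lambda_i)) v_i = 0$, a strictly shorter relation, hence trivial by minimality. Thus each $\lambda_i$ is fixed by every $\gamma$, so $\lambda_i \in \LL^\Gamma = \KK$, contradicting $\KK$-independence.

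For surjectivity I would use an averaging operator: for $w \in \VV$ and $\lambda \in \LL$ set $w_\lambda = \sum_{\gamma \in \Gamma} \gamma(\lambda w) = \sum_{\gamma \in \Gamma} \gamma(\lambda)\gamma(w)$, which lies in $\VV^\Gamma$ since for $\delta \in \Gamma$ one computes $\delta(w_\lambda) = \sum_\gamma (\delta\gamma)(\lambda)\,(\delta\gamma)(w) = w_\lambda$ by semilinearity and reindexing. Now fix a $\KK$-basis $\lambda_1, \ldots, \lambda_n$ of $\LL$, where $n = [\LL:\KK] = |\Gamma|$, and enumerate $\Gamma = \{\gamma_1, \ldots, \gamma_n\}$ with $\gamma_1 = \id$. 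The matrix $(\gamma_i(\lambda_j))_{i,j}$ is invertible over $\LL$ because $\LL/\KK$ is separable (equivalently, by Dedekind's lemma on the linear independence of the distinct automorphisms $\gamma_i$, or by non-vanishing of the discriminant of $\lambda_1,\dots,\lambda_n$). Solving the linear system $w_{\lambda_j} = \sum_i \gamma_i(\lambda_j)\gamma_i(w)$ for the $\gamma_i(w)$ then expresses each $\gamma_i(w)$ — in particular $w = \gamma_1(w)$ itself — as an $\LL$-linear combination of the vectors $w_{\lambda_j} \in \VV^\Gamma$. Hence $\VV = \LL\cdot\VV^\Gamma$ and $\mu$ is surjective.

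Combining these, $\mu$ is an $\LL$-linear isomorphism, so $\dim_\LL \VV = \dim_\LL(\LL \otimes_\KK \VV^\Gamma) = \dim_\KK \VV^\Gamma$, which is the claim. The only genuinely non-formal ingredient is the invertibility of $(\gamma_i(\lambda_j))$ in the surjectivity step, where separability of $\LL/\KK$ (automatic since the extension is Galois) is essential; everything else is bookkeeping with the two axioms on the $\Gamma$-action.
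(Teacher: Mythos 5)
Your proof is correct, but it takes a genuinely different route from the paper. The paper simply invokes the generalised Hilbert Theorem~90, $H^1(\Gamma,\GL_n(\LL))=\{1\}$ (citing Serre), then fixes an $\LL$-basis of $\VV$, observes that comparing it with its Galois conjugates yields a cocycle, and writes that cocycle as a coboundary to produce a $\Gamma$-fixed $\LL$-basis of $\VV$. You instead prove Galois descent directly: you show the natural map $\LL \otimes_\KK \VV^\Gamma \to \VV$ is an isomorphism, handling injectivity by the minimal-support relation argument (so $\KK$-independent fixed vectors stay $\LL$-independent) and surjectivity by the averaging vectors $w_\lambda = \sum_\gamma \gamma(\lambda)\gamma(w)$ together with the invertibility of the matrix $\bigl(\gamma_i(\lambda_j)\bigr)$, which follows from Dedekind's independence of characters (or separability). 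This is essentially an unpacked, cocycle-free proof of the same $H^1$ vanishing: the averaging-plus-Dedekind step is exactly the mechanism hidden inside the standard proof of Hilbert~90 that the paper cites. What your route buys is self-containedness and a slightly stronger structural conclusion (the explicit isomorphism $\LL\otimes_\KK\VV^\Gamma\isom\VV$, from which the dimension count is immediate), at the cost of length; the paper's route buys brevity by outsourcing the real content to a standard reference. One tiny point worth making explicit in your write-up: finite $\KK$-dimensionality of $\VV^\Gamma$ is not assumed but follows from your injectivity step, since any $\KK$-independent subset of $\VV^\Gamma$ is $\LL$-independent in $\VV$ and hence has at most $\dim_\LL\VV$ elements.
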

\begin{proof} A generalised form of Hilbert's Theorem 90 states that
  $H^1(\Gamma,\GL_n(\LL)) = \{1\}$. See for example \cite[Chapter X,
  Proposition 3]{SerreLF}. We fix a basis for $\VV$ over $\LL$, and
  then compare this basis with its Galois conjugates. By writing the
  resulting cocycle as a coboundary, we find a new basis for $\VV$
  over $\LL$ consisting of vectors fixed by $\Gamma$.
\end{proof}
 
\begin{lem}
\label{lem:Md}
Let $M_d$ be the $K[c_4,c_6]$-module of covariants for $Y=S^{5d}W$,
quotiented out by the submodule of covariants that vanish on the
curve.  Then $M_d$ is a free $K[c_4,c_6]$-module of rank $d$ generated
by
\[ \{ X^i Y^j Z^k : i,k \ge 0, j \in \{0,1\}, 2i + 3j + k = d \} \]
where $Z,X,Y$ are the covariants in Lemma~\ref{lemma3}.
\end{lem}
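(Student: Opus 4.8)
The plan is to combine Lemma~\ref{lemma1}, Lemma~\ref{lemma2}, Lemma~\ref{lemma3} and the freeness/rank results already established. The first step is to identify the rank. By Theorem~\ref{thm:indep}(i) the module of covariants for $Y = S^{5d}W$ is free over $K[c_4,c_6]$ of rank $\dim (S^{5d}W)^{H_5}$, and the submodule of covariants vanishing on the curve is a submodule of this free module, hence (after checking it is a direct summand, which follows from applying a Reynolds-type argument, or more simply from the grading) the quotient $M_d$ is free. Its rank is computed by specialisation: for a non-singular $\phi$, a basis of $M_d$ as a free module specialises to a spanning set of the space of degree-$5d$ Heisenberg invariant forms modulo those vanishing on $C_\phi$, which by Lemma~\ref{lemma1} has dimension $d$. (One needs that the rank equals this specialised dimension for a generic $\phi$; this is exactly the sort of argument used in the proof of Theorem~\ref{thm:indep}.)

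Next I would exhibit the proposed generators and check they lie in $M_d$ with the right degrees. The covariants $Z,X,Y$ of Lemma~\ref{lemma3} have orders $5,10,15$ and degrees $50,110,165$, so the monomial $X^iY^jZ^k$ with $2i+3j+k=d$ is a covariant of order $5(2i+3j+k)\cdot ? $ --- more precisely order $10i+15j+5k = 5d$, as required, and of degree $110i+165j+50k$. For fixed $d$ there are exactly $d$ such triples $(i,j,k)$ (namely $j=0$ forces $k=d-2i$ with $0\le i\le \lfloor d/2\rfloor$, and $j=1$ forces $k=d-3-2i$ with $0\le i\le \lfloor(d-3)/2\rfloor$; summing gives $\lfloor d/2\rfloor+1 + \lfloor(d-1)/2\rfloor = d$). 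So we have the right number of elements.

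The crucial step is linear independence over $K[c_4,c_6]$ of the images of these monomials in $M_d$, which by the freeness of $M_d$ and the rank count will force them to be a basis. Here I would use Theorem~\ref{thm:indep}-style specialisation: it suffices to show that for some $\phi$ with $C_\phi$ an elliptic normal quintic or rational nodal quintic, the specialisations $X(\phi)^iY(\phi)^jZ(\phi)^k$ are linearly independent in $K[x_0,\dots,x_4]/I(C_\phi)$. But this is precisely the content of Lemma~\ref{lemma2}, applied with the smooth point $P$ supplied by Lemma~\ref{lemma3}: there $\ord_P(Z)=1$, $\ord_P(X)=\ord_P(Y)=0$, so $\ord_P(X^iY^jZ^k)=k$, and the admissible triples have pairwise distinct values of $k$. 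Hence no nontrivial $K$-linear combination can vanish modulo $I(C_\phi)$, and a fortiori no nontrivial $K[c_4,c_6]$-linear combination can lie in the vanishing submodule. Combined with the rank-$d$ freeness of $M_d$ this shows the $d$ monomials are a free basis.

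The main obstacle is the bookkeeping needed to promote ``linearly independent after one specialisation'' to ``free $K[c_4,c_6]$-basis''. One has to argue as in the proof of Theorem~\ref{thm:indep}(ii): if $\sum I_{ijk}\, X^iY^jZ^k = G$ with $G$ a covariant vanishing on $C_\phi$ for \emph{all} such $\phi$, then evaluating at a well-chosen $\phi$ and using that the $X^iY^jZ^k$ specialise to $K$-independent elements modulo $I(C_\phi)$ forces each $I_{ijk}(\phi)=0$; taking $\phi$ as in Lemma~\ref{lem:van} (or ranging over enough $\phi$) then forces $I_{ijk}=0$ identically by Theorem~\ref{thm:inv}. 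This shows the $d$ monomials generate a free submodule which, having the full rank $d$ of the free module $M_d$, must be all of $M_d$ --- here one uses that a rank-$d$ free submodule of a rank-$d$ free module over the polynomial ring $K[c_4,c_6]$ need not be the whole module, so a small extra argument (comparing Hilbert series, or checking the determinant of the change-of-basis matrix is a nonzero constant) is required, exactly the kind of degree count that already appears in Proposition~\ref{prop:quintics}.
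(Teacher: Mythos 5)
Your middle step—using Lemma~\ref{lemma3} to supply a smooth point $P$ with $\ord_P(Z)=1$, $\ord_P(X)=\ord_P(Y)=0$ and Lemma~\ref{lemma2} to get linear independence of the specialised monomials modulo $I(C_\phi)$—is exactly the paper's argument, and your count of the monomials is fine. But two of your three steps have genuine gaps. First, the rank computation: you never establish that $M_d$ has rank $d$. Your sketch (the vanishing submodule is a direct summand ``from the grading'', and a basis of $M_d$ specialises to a spanning set of the $d$-dimensional space of Lemma~\ref{lemma1}) only shows that a special fibre of $M_d$ surjects onto a $d$-dimensional space, i.e.\ it bounds a fibre dimension from below; it gives no upper bound on the rank, and freeness of $M_d$ (equivalently the splitting you assert) is part of the conclusion, not an a priori fact. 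What is actually needed is that the submodule of covariants vanishing on the curve has rank exactly $m-d$, where $m=\dim (S^{5d}W)^{H_5}$. The paper obtains this by applying Lemma~\ref{lemma1} to the \emph{generic} Hesse curve over $\LL=K(a,b)$, descending from $\LL$ to $\KK=K(a,b)^\Gamma$ by the Hilbert~90 statement of Lemma~\ref{H90} (that lemma exists precisely for this purpose), and then transferring from discrete covariants to covariants via Theorem~\ref{mainthm}; your proposal omits this step entirely, and the proof of Theorem~\ref{thm:indep} you appeal to does not contain a substitute for it.

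Second, the passage from ``the $d$ monomials generate a free rank-$d$ submodule'' to ``they generate $M_d$''. You correctly flag that this does not follow from rank considerations alone, but the fixes you gesture at (comparing Hilbert series, or a change-of-basis determinant) presuppose knowledge of the generator degrees of $M_d$, which is exactly what is not known at that point. The paper closes this gap with the second half of the argument of Theorem~\ref{thm:indep}(ii): if $\sum I_{ijk}\,X^iY^jZ^k \equiv I F$ modulo covariants vanishing on the curve, one specialises at a model $\phi$ lying in the zero locus of an irreducible factor of $I$ with $C_\phi$ an elliptic normal quintic or rational nodal quintic (Lemma~\ref{lem:van}); the independence of the specialised monomials at \emph{that} $\phi$ (Lemmas~\ref{lemma2} and~\ref{lemma3}) forces $I_{ijk}(\phi)=0$, and Lemma~\ref{lem:codim1} then gives $I \mid I_{ijk}$, which together with the rank being $d$ yields a basis. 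This is why Lemma~\ref{lemma3} is stated for \emph{every} elliptic normal or rational nodal quintic; your remark that ``it suffices to show that for some $\phi$'' the specialisations are independent is too weak—one specialisation gives only a free submodule, and the divisibility step needs suitable $\phi$ on the zero locus of an arbitrary irreducible invariant.
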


\begin{proof}
  Let $Z = (S^{5d}W)^{H_5}$ and $m = \dim Z$. We apply Lemma~\ref{H90}
  with $\KK = K(a,b)^\Gamma$, $\LL = K(a,b)$ and $\VV$ either $\UU =
  \LL \otimes_K Z$ or the subspace $\UU_0$ of forms that vanish on the
  curve defined by the generic Hesse model $u(a,b)$.  Since the action
  of $\Gamma$ on $\Aff^2$ (and hence on $\LL = K(\Aff^2)$) was defined
  so that $u : \Aff^2 \to (\wedge^2 V \otimes W)^{H_5}$ is
  $\Gamma$-equivariant, we do indeed have that $\Gamma$ acts on
  $\UU_0$. By Lemmas~\ref{lemma1} and~\ref{H90} we compute
\[ \begin{array}{l}
\dim_\KK \UU^\Gamma  =  \dim_\LL \UU = m, \\
\dim_\KK \UU_0^\Gamma  =  \dim_\LL \UU_0 = m -d. 
\end{array} \]
Thus the $K[a,b]^\Gamma$-module of discrete covariants $\Aff^2 \to Z$
has rank $m$, and the submodule of discrete covariants vanishing on
the curve has rank $m-d$.  It follows by Theorem~\ref{mainthm}, and
the proof of \cite[Lemma 4.5]{invenqI}, that the $K[c_4,c_6]$-module
of covariants $\wedge^2 V \otimes W \to S^{5d}W$ has rank $m$, and the
submodule of covariants vanishing on the curve has rank $m-d$.
Therefore $M_d$ has rank $d$.

Let $F_1, \ldots ,F_d$ be the covariants in the statement of the
lemma. Lemmas~\ref{lemma2} and~\ref{lemma3} show that if $C_\phi$ is
an elliptic normal quintic or rational nodal quintic then $F_1(\phi),
\ldots, F_d(\phi)$ are linearly independent over $K$. An argument
similar to the proof of Theorem~\ref{thm:indep}(ii) now shows that
$F_1, \ldots, F_d$ are a free basis for $M_d$.
\end{proof}

We show that the covariants $Z,X,Y$ give a formula for the covering
map.  The formula for the Jacobian was already proved in \cite{g1inv}
by a different method.

\begin{thm}
\label{thm:cov}
Let $\phi \in \wedge^2 V \otimes W$ be non-singular. Then $C_{\phi}$
has Jacobian elliptic curve $E$ with Weierstrass equation
\begin{equation}
\label{weqn}
 y^2 = x^3 - 27 c_4(\phi) x - 54 c_6(\phi) 
\end{equation}
and the covering map $C_\phi \to E$ is given by $(x,y) =
(X/Z^2,Y/Z^3)$ where $Z,X,Y$ are the evaluations at $\phi$ of the
covariants in Lemma~\ref{lemma3}.
\end{thm}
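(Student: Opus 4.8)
The plan is to reduce to the Hesse family and then study the two functions $X/Z^2$ and $Y/Z^3$ on $C_\phi$ by pushing them down along the covering map $\pi\colon C_\phi\to E$. Using Lemma~\ref{wtlemma} one checks that the asserted formula transforms equivariantly under $\GL(V)\times\GL(W)$ — the non-trivial weights of $X/Z^2$ and $Y/Z^3$ in the two directions are precisely those of $x$ and $y$ in a Weierstrass equation, matching the way (\ref{weqn}) rescales — so it suffices to treat a non-singular Hesse model $\phi=u(a,b)$. (The equation (\ref{weqn}) for $E$ is \cite{g1inv}, though the argument below recovers it.)

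So fix $\phi=u(a,b)$. The stabiliser of $\phi$ in $\SL(V)\times\SL(W)$ is $H=\{(\theta_V(h),\theta_W(h)):h\in H_5\}$ by Lemma~\ref{handn}, and $Z,X,Y$ are equivariant, so the forms $Z(\phi),X(\phi),Y(\phi)$ of degrees $5,10,15$ on $\PP^4$ are invariant under $\theta_W(H_5)$. As in the proof of Lemma~\ref{handn}, $\theta_W(H_5)$ acts on $C_\phi$ as translation by $E[5]$; this action is free (since $\charic K\nmid30$) with quotient $\pi$. Hence $X/Z^2$ and $Y/Z^3$, being $\theta_W(H_5)$-invariant rational functions on $C_\phi$, equal $\pi^*x_E$ and $\pi^*y_E$ for rational functions $x_E,y_E$ on $E$. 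Now pin down the divisors: $C_\phi$ has degree $5$, so $Z,X,Y$ have zero divisors of degrees $25,50,75$ on $C_\phi$; these are $\theta_W(H_5)$-invariant, and by Lemma~\ref{lemma3} together with (\ref{ZXYenc}) at $P_0=(0:a:b:-b:-a)$ we have $\ord_{P_0}Z=1$, $\ord_{P_0}X=\ord_{P_0}Y=0$. Therefore the zero divisor of $Z$ on $C_\phi$ is $\pi^*(\pi P_0)$, that of $X$ is $\pi^*D_X$ and that of $Y$ is $\pi^*D_Y$ with $D_X,D_Y$ effective of degrees $2,3$ not containing $\pi P_0$, so $x_E$ has pole divisor exactly $2\,\pi P_0$ and $y_E$ exactly $3\,\pi P_0$. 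Finally $P_0$ is a hyperosculating point of $C_\phi$ (proof of Lemma~\ref{lemma3}), so $5P_0\sim H$ and $\pi(P_0)=[5P_0-H]=0_E$; thus $x_E\in\CL(2\cdot 0_E)$ and $y_E\in\CL(3\cdot 0_E)$ with exact pole orders $2$ and $3$ at $0_E$.

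It remains to identify the cubic relation. The functions $1,x_E,y_E,x_E^2,x_Ey_E,x_E^3$ have distinct pole orders $0,2,3,4,5,6$ at $0_E$, hence form a basis of $\CL(6\cdot 0_E)$, so $y_E^2$ is a unique $K$-linear combination of them: this is a Weierstrass equation for $(E,0_E)$. Pulling it back along $\pi$ yields a covariant identity $\wedge^2V\otimes W\to S^{30}W$ modulo $I(C_\phi)$; by Lemma~\ref{lem:Md} these covariants form a free $K[c_4,c_6]$-module with basis $\{X^iY^jZ^k:2i+3j+k=6,\ j\in\{0,1\}\}$, and a degree count (the monomials $Y^2$, $X^3$, $c_4XZ^4$, $c_6Z^6$ all have degree $330$, while none of the remaining basis monomials admits an invariant coefficient of the required degree) forces the relation to read
\[ Y^2 \;\equiv\; \lambda\, X^3 \;+\; \mu\, c_4\, X Z^4 \;+\; \nu\, c_6\, Z^6 \pmod{I(C_\phi)} \]
with universal constants $\lambda,\mu,\nu\in K$. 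Evaluating at $P_0$ on the Hesse family, where $Z(\phi)$ vanishes and $Y(P_0)^2=X(P_0)^3$ by (\ref{ZXYenc}), gives $\lambda=1$; evaluating at the rational nodal quintic $\phi=u_1(0,1,1,1,1)$ via (\ref{ZXYrnc}), using $c_4(\phi)=1$ and $c_6(\phi)=-1$ (which may be read off from (\ref{discinv}) by degenerating the Hesse family as in Lemma~\ref{lem:D}, letting $a\to0$), forces $\mu=-27$ and $\nu=-54$. Hence $y_E^2=x_E^3-27c_4x_E-54c_6$ on $E$, so $(x_E,y_E)$ identifies $(E,0_E)$ with the curve (\ref{weqn}) and its point at infinity; composing with $\pi$, the covering map $C_\phi\to E$ is $P\mapsto\bigl(x_E(\pi P),y_E(\pi P)\bigr)=\bigl(X(\phi)/Z(\phi)^2,\;Y(\phi)/Z(\phi)^3\bigr)(P)$, as claimed.

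I expect the third paragraph to be the crux: everything hinges on promoting the Weierstrass relation between $x_E$ and $y_E$ to a \emph{covariant} identity with universal coefficients, for which Lemma~\ref{lem:Md} and the degree bookkeeping are essential, and then on the two short specialisations that fix $\lambda,\mu,\nu$ (these also require knowing the values of $c_4,c_6$ at $u_1(0,1,1,1,1)$). The geometric ingredients — descent through $\pi$, the divisor computation, and $\pi(P_0)=0_E$ — should be routine.
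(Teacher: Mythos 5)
Your proposal is correct, and its crux coincides with the paper's: both arguments expand $Y^2$ in the free basis of $M_6$ from Lemma~\ref{lem:Md}, use the degree bookkeeping ($Z,X,Y$ of degrees $50,110,165$ against $c_4,c_6$ of degrees $20,30$) to force the shape $Y^2 \equiv \la X^3 + \mu c_4 XZ^4 + \nu c_6 Z^6$ modulo forms vanishing on the curve, and then fix the constants by specialisation using~(\ref{ZXYrnc}) and~(\ref{ZXYenc}). The differences are in the finish: the paper does not descend $X/Z^2$, $Y/Z^3$ to the Jacobian at all — it simply observes that the formula defines a morphism to the Weierstrass curve~(\ref{weqn}), that by~(\ref{ZXYenc}) and Bezout the fibre over the point at infinity is the $25$ flexes (so the degree is $25$), and that covariance makes the map Heisenberg-invariant, hence the quotient by $E[5]$, i.e.\ the covering map — whereas you descend the two functions through the Heisenberg quotient, compute their divisors, and identify them as elements of $\CL(2\cdot 0_E)$ and $\CL(3\cdot 0_E)$; this is a slightly longer but equally valid route, and it has the small advantages of pinning down $\pi(P_0)=0_E$ explicitly and of recording the values $c_4=1$, $c_6=-1$ at $u_1(0,1,1,1,1)$, which the paper leaves implicit in "using~(\ref{ZXYrnc}) we find $\la=1$, $\mu=-27$, $\nu=-54$". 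Your opening reduction to the Hesse family via weight considerations is harmless but unnecessary for the covariant identity itself (which is universal); it is only needed for your descent step, just as the paper's Lemma~\ref{lemma3} already invokes the Hesse normal form there. Note also that both treatments share the same mild implicit point, namely that the relation, known modulo $I(C_\phi)$ for non-singular $\phi$, continues to hold on the rational nodal specialisation.
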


\begin{proof}
By Lemma~\ref{lem:Md} the $K[c_4,c_6]$-module $M_6$ has basis
\[ X^3, \,\, XYZ, \,\, X^2 Z^2, \,\, Y Z^3, \,\, X Z^4, \,\, Z^6. \]
Since $Z,X,Y$ have degrees $50, 110, 165$ and $c_4,c_6$ have degrees
$20, 30$ we must therefore have
\[Y^2 = \la X^3 + \mu c_4 X Z^4 + \nu c_6 Z^6\] for some $\la, \mu,
\nu \in K$. We determine these scalars by specialising to the case
$C_\phi$ is a rational nodal quintic.  Using~(\ref{ZXYrnc}) we find
$\la = 1, \mu = -27, \nu = -54$.  Thus $(x,y) = (X/Z^2, Y/Z^3)$
defines a morphism $\pi : C_{\phi} \to E$ where $E$ is the curve
defined by~(\ref{weqn}). The fibre above the point at infinity on $E$
is $C_{\phi} \cap \{ Z = 0 \}$.  By~(\ref{ZXYenc}) and Bezout's
Theorem this consists of the 25 flexes on $C_\phi$. Thus $ \deg \pi =
25$.  Since $Z,X,Y$ are covariants it is clear that $\pi$ quotients
out by the action of the Heisenberg group on $C_{\phi}$. Hence $E$ is
the Jacobian of $C_{\phi}$ and $\pi$ is the covering map.
\end{proof}

We gave algorithms for computing $Q_6$ and $H$ in \cite[Section
8]{g1inv} and \cite[Section 11]{g1hess}. So we can evaluate the
covariants $Z,X,Y$ by following the proof of Lemma~\ref{lemma3}. This
gives a practical algorithm for computing the covering map. Although
we have been working over an algebraically closed field it is clear
that Theorem~\ref{thm:cov} still holds without this assumption. We
give an example in the case $K= \Q$.

\begin{example}
\label{ex1}
Let $C \subset \PP^4$ be the elliptic normal quintic defined by the $4
\times 4$ Pfaffians of 
\small
\[ \left( \begin{array}{ccccc}
0  & 2 x_2 + 3 x_4  & 2 x_2 + x_3 + x_4 + 4 x_5 &  x_1 - x_3 + 3 x_4 - x_5  & -x_1 - x_2 - x_5 \\
 &  0 &  x_1 + 2 x_2 - x_3 - 2 x_4 + x_5 &  2 x_1 - x_2 + x_3 + 3 x_4  & -x_1 + x_2 - x_3 + x_5 \\
&  &  0 &  -2 x_2 + x_3 + x_4 + 2 x_5 &  -2 x_4 + x_5 \\
 & - &  &  0 &  x_2 + x_3 + 2 x_4 - x_5 \\
 &  &  &  &  0
\end{array} \right) \]
\normalsize
The invariants of this model are $c_4 = 21288863488$ and
$c_6 = 3106257241074688$. Our Magma function {\tt CoveringCovariants} 
evaluates the covariants of Lemma~\ref{lemma3} to give forms $Z,X,Y$.
The first of these is
\begin{align*}
 Z = & \,\,  208089517036452423241728 x_1^5 +
    481348375428118457413632 x_1^4 x_2 \\
   & -1067331097433708461809664 x_1^4 x_3 - 
     861565401032195664871424 x_1^4 x_4 \\
   & -2713065303844178403139584 x_1^4 x_5
    -1159509369215265868720128 x_1^3 x_2^2 \\
 & + \ldots +    8511800259354855263252480 x_5^5.
\end{align*}
Evaluating these forms at 
$(4013: -2384: -1616: 1388: 1021) \in C(\Q)$ we obtain
\begin{align*}
Z =& \,\, 3412377609951638022163996178720787224832, \\
X =& \,\, 12141242195111585999097107425889311253617470393872861501219624577\backslash \\
& \,\, 3843080932512669892608,  \\
Y =& \,\, 13341702475842976696719854379608150742217144829049714776419935109\backslash \\
& \,\, 10164201520123953599858396067352426339710162835468918162316066816. 
\end{align*}
The Jacobian of $C$ is the elliptic curve $E$ with Weierstrass equation
$y^2 = x^3 - 27 c_4 x - 54 c_6$ and $P = (X/Z^2,Y/Z^3) \in E(\Q)$ is a point
of canonical height $164.90718\ldots$. In fact 
$E(\Q)$ has rank $1$ and is generated by $P$.
\end{example}

\begin{rem} The elliptic curve $E$ in the above example is labelled
  17472bz1 in Cremona's tables \cite{CrTables}.  It satisfies a
  $5$-congruence with the elliptic curve $F$ labelled $17472bx2$. In
  fact $F$ has Weierstrass equation $y^2 = x (x+16)(x-26)$ and the
  genus one model in Example~\ref{ex1} may be constructed from the
  point $(-2,28) \in F(\Q)$ using visibility as described in
  \cite{invenqI}.
\end{rem}

\end{document}